\DeclareSymbolFont{cyrletters}{OT2}{wncyr}{m}{n}
\DeclareMathSymbol{\Sha}{\mathalpha}{cyrletters}{"58}
\newcommand{\Adual}{\widehat{A}}
\newcommand{\WQ}{{\textup{W}}Q}
\newcommand{\Poincare}{\mathscr{P}}
\bmdefine\boldmu{\mu}
\newcommand{\Exp}{\mathbb{E}}
\newcommand{\defi}[1]{\textsf{#1}} 
\newcommand{\C}{{\mathbb C}}
\newcommand{\F}{{\mathbb F}}
\newcommand{\G}{{\mathbb G}}
\newcommand{\Q}{{\mathbb Q}}
\newcommand{\R}{{\mathbb R}}
\newcommand{\Z}{{\mathbb Z}}
\newcommand{\Qbar}{{\overline{\Q}}}
\newcommand{\kbar}{{\overline{k}}}
\newcommand{\calG}{{\mathcal G}}
\newcommand{\calH}{{\mathcal H}}
\newcommand{\calI}{{\mathcal I}}
\newcommand{\calO}{{\mathcal O}}
\newcommand{\calS}{{\mathcal S}}
\newcommand{\calT}{{\mathcal T}}
\newcommand{\calW}{{\mathcal W}}
\newcommand{\calX}{{\mathcal X}}
\newcommand{\LL}{{\mathscr L}}
\newcommand{\OO}{{\mathscr O}}
\DeclareMathOperator{\Prob}{Prob}
\DeclareMathOperator{\rk}{rk}
\DeclareMathOperator{\Char}{char}
\DeclareMathOperator{\inv}{inv}
\DeclareMathOperator{\im}{im}
\DeclareMathOperator{\Hom}{Hom}
\DeclareMathOperator{\Aut}{Aut}
\DeclareMathOperator{\Gal}{Gal}
\DeclareMathOperator{\Sel}{Sel}
\DeclareMathOperator{\Pic}{Pic}
\DeclareMathOperator{\Jac}{Jac}
\DeclareMathOperator{\Spec}{Spec}
\newcommand{\conts}{{\operatorname{conts}}}
\newcommand{\tors}{{\operatorname{tors}}}
\newcommand{\unr}{{\operatorname{unr}}}
\newcommand{\tH}{{\operatorname{th}}}
\newcommand{\HH}{{\operatorname{H}}}
\newcommand{\Sp}{\operatorname{Sp}}
\newcommand{\GSp}{\operatorname{GSp}}
\newcommand{\GL}{\operatorname{GL}}
\newcommand{\injects}{\hookrightarrow}
\newcommand{\isom}{\simeq}
\newcommand{\Intersection}{\bigcap} 
\newcommand{\intersect}{\cap} 
\newcommand{\Union}{\bigcup} 
\newcommand{\union}{\cup} 
\newcommand{\tensor}{\otimes} 
\newcommand{\directsum}{\oplus} 
\newcommand{\Directsum}{\bigoplus} 
\newtheorem{theorem}{Theorem}[section]
\newtheorem{lemma}[theorem]{Lemma}
\newtheorem{corollary}[theorem]{Corollary}
\newtheorem{proposition}[theorem]{Proposition}
\newtheorem{conjecture}[theorem]{Conjecture}
\theoremstyle{definition}
\newtheorem{definition}[theorem]{Definition}
\newtheorem{example}[theorem]{Example}
\theoremstyle{remark}
\newtheorem{remark}[theorem]{Remark}
\begin{document}

\title{Random maximal isotropic subspaces and Selmer groups}
\subjclass[2010]{Primary 11G10; Secondary 11G05, 11G30, 14G25, 14K15}
\keywords{Selmer group, Shafarevich-Tate group, maximal isotropic, quadratic space, Weil pairing, theta characteristic}
\author{Bjorn Poonen}
\thanks{B.~P.\ was partially supported by NSF grant DMS-0841321.}
\address{Department of Mathematics, Massachusetts Institute of Technology, Cambridge, MA 02139-4307, USA}
\email{poonen@math.mit.edu}
\urladdr{http://math.mit.edu/~poonen/}
\author{Eric Rains}
\address{Department of Mathematics, California Institute of Technology, Pasadena, CA 91125}
\email{rains@caltech.edu}
\date{April 10, 2011}

\begin{abstract}
Under suitable hypotheses, 
we construct a probability measure on the set of closed maximal 
isotropic subspaces of a locally compact quadratic space over $\F_p$.
A random subspace chosen with respect to this measure is discrete
with probability $1$,
and the dimension of its intersection with a fixed compact open
maximal isotropic subspace is a certain nonnegative-integer-valued 
random variable.

We then prove that the $p$-Selmer group of an elliptic curve is naturally
the intersection of a discrete maximal isotropic subspace with
a compact open maximal isotropic subspace 
in a locally compact quadratic space over $\F_p$.
By modeling the first subspace as being random,
we can explain the known phenomena regarding
distribution of Selmer ranks, such as the theorems of Heath-Brown,
Swinnerton-Dyer, and Kane for $2$-Selmer groups in certain families of
quadratic twists, and the average size of $2$- and $3$-Selmer groups
as computed by Bhargava and Shankar.  
Our model is compatible
with Delaunay's heuristics for $p$-torsion in Shafarevich-Tate groups,
and predicts that the average rank of elliptic curves
over a fixed number field is at most $1/2$.
Many of our results generalize to abelian varieties over global fields.
\end{abstract}

\maketitle

\section{Introduction}
\label{S:introduction}

\subsection{Selmer groups}
\label{S:introduction Selmer groups}

D.~R.~Heath-Brown \cites{Heath-Brown1993,Heath-Brown1994},
P.~Swinnerton-Dyer \cite{Swinnerton-Dyer2008},
and D.~Kane \cite{Kane-preprint}
obtained the distribution for the nonnegative integer $s(E)$
defined as the $\F_2$-dimension of the $2$-Selmer group $\Sel_2 E$
minus the dimension of the rational $2$-torsion group $E(\Q)[2]$,
as $E$ varies over quadratic twists of certain elliptic curves over $\Q$.
The distribution was the one for which
\[
	\Prob\left(s(E) = d \right)
	= \left( \prod_{j \ge 0} (1+2^{-j})^{-1} \right) 
	\left( \prod_{j=1}^d \frac{2}{2^j-1} \right).
\]
In \cite{Heath-Brown1994}, it was reconstructed from 
the moments of $2^{s(E)}$;
in \cite{Swinnerton-Dyer2008} and \cite{Kane-preprint},
it arose as the stationary distribution for a Markov process.

Our work begins with the observation that this distribution 
coincides with a distribution arising naturally in combinatorics,
namely, the limit as $n \to \infty$ 
of the distribution of $\hbox{$\dim(Z \intersect W)$}$
where $Z$ and $W$ are random maximal isotropic subspaces
inside a hyperbolic quadratic space of dimension $2n$ over $\F_2$.
Here it is essential that the maximal isotropic subspaces 
be isotropic not only for the associated symmetric bilinear pairing,
but also for the quadratic form; otherwise, one would obtain the wrong
distribution.
That such a quadratic space might be relevant is suggested
already by the combinatorial calculations in \cite{Heath-Brown1994}.

Is it just a coincidence, 
or is there some direct relation between Selmer groups and 
intersections of maximal isotropic subgroups?
Our answer is that $\Sel_2 E$ is naturally the 
intersection of two maximal isotropic subspaces
in an {\em infinite-dimensional} quadratic space $V$ over $\F_2$.
The fact that it could be obtained as an intersection
of two subspaces that were maximal isotropic 
for a pairing induced by a Weil pairing 
is implicit in standard arithmetic duality theorems.

To make sense of our answer, 
we use the theory of quadratic forms on locally compact abelian groups
as introduced by A.~Weil in~\cite{Weil1964}.
The locally compact abelian group $V$ in the application
is the restricted direct product of the groups $\HH^1(\Q_p,E[2])$
for $p \le \infty$
with respect to the subgroups of unramified classes.
The quadratic form $Q$ is built using D.~Mumford's Heisenberg group,
using ideas of Yu.~Zarhin~\cite{Zarhin1974}*{\S2}.
Then arithmetic duality theorems are applied to show
that the images of the compact group $\prod_{p \le \infty} E(\Q_p)/2E(\Q_p)$
and the discrete group $\HH^1(\Q,E[2])$ are maximal isotropic in $(V,Q)$.
Their intersection is $\Sel_2 E$.

\subsection{Conjectures for elliptic curves}

This understanding of the structure of $\Sel_2 E$ suggests the following, 
in which we replace $2$ by $p$ and generalize to global fields:
\begin{conjecture}
\label{C:conjecture for Sel_p E}
Fix a global field $k$ and a prime $p$.
  \begin{enumerate}[\upshape (a)]
  \item \label{I:distribution for Sel_p E}
As $E$ varies over all elliptic curves over $k$,
\[
	\Prob\left(\dim_{\F_p} \Sel_p E = d \right)
	= \left( \prod_{j \ge 0} (1+p^{-j})^{-1} \right) 
	\left( \prod_{j=1}^d \frac{p}{p^j-1} \right).
\]
(For the sake of definiteness, we define the probability
by considering the finitely many elliptic curves $y^2=x^3+ax+b$ 
where $a,b \in k$ have height $\le B$, and looking at the limit
of the probability as $B \to \infty$; use a long Weierstrass equation
if $\Char k$ is $2$ or $3$.)
\item \label{I:average size for Sel_p E}
The average of $\#\Sel_p E$ over all $E/k$ is $p+1$.
\item \label{I:moments for Sel_p E}
For $m \in \Z_{\ge 0}$, 
the average of $(\#\Sel_p E)^m$ over all $E/k$ is $\prod_{i=1}^m (p^i+1)$.
  \end{enumerate}
\end{conjecture}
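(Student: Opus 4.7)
The plan is to derive the conjecture from the structural result announced in the introduction, together with the probability measure on maximal isotropic subspaces sketched in the abstract. Namely, assuming the theorem that $\Sel_p E$ is naturally the intersection of a discrete maximal isotropic subspace $Z = \im(\HH^1(k,E[p]) \to V)$ and a compact open maximal isotropic subspace $W = \prod_v \im\bigl(E(k_v)/pE(k_v) \to \HH^1(k_v,E[p])\bigr)$ in the locally compact quadratic space $V = \prod'_v \HH^1(k_v,E[p])$, I would model $Z$ as a random discrete maximal isotropic subspace chosen with respect to the measure constructed in the abstract, with $W$ fixed. Then (a), (b), and (c) become statements purely about this probabilistic model.

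The first step is to prove the finite-dimensional combinatorial identity: if $Z_n$ and $W_n$ are independent uniformly random maximal isotropic subspaces of a hyperbolic quadratic space of dimension $2n$ over $\F_p$, then
\[
	\Prob\bigl(\dim(Z_n \cap W_n) = d\bigr) \xrightarrow[n\to\infty]{} \left( \prod_{j \ge 0} (1+p^{-j})^{-1} \right) \left( \prod_{j=1}^d \frac{p}{p^j-1} \right).
\]
This I would establish by counting pairs of maximal isotropic subspaces with prescribed intersection dimension via orbits of the orthogonal group $O^+_{2n}(\F_p)$, i.e., using the transitivity of $O^+_{2n}$ on flags (isotropic, maximal isotropic, maximal isotropic) and the known cardinalities of the relevant parabolic subgroups. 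Passing to the limit and matching with the measure constructed for the infinite-dimensional setting (where $W$ plays the role of one of the subspaces and $Z$ is sampled) then yields (a) under the randomness hypothesis.

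For (b) and (c), I would avoid summing the distribution in (a) directly and instead compute moments of $\#(Z \cap W) = p^{\dim(Z \cap W)}$ in the finite-dimensional model as
\[
	\Exp\bigl[\#(Z_n \cap W_n)^m\bigr] = \sum_{(z_1,\dots,z_m) \in W_n^m} \Prob\bigl(z_1,\dots,z_m \in Z_n\bigr),
\]
the last probability being zero unless the $z_i$ span an isotropic subspace, in which case it is a ratio of counts of maximal isotropics containing a given isotropic subspace. Stratifying the sum by $k = \dim \langle z_1,\dots,z_m \rangle$ gives a telescoping evaluation to $\prod_{i=1}^m (p^i + 1)$ in the $n \to \infty$ limit, consistent with the moments of (a). Part (b) is the case $m=1$.

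The main obstacle is not the combinatorics but the justification of the randomness hypothesis itself: one must argue that, as $E$ ranges over elliptic curves over $k$ ordered by height, the discrete maximal isotropic subspace $\im(\HH^1(k,E[p]) \to V)$ equidistributes with respect to the natural measure, conditional on the fixed $W$. This equidistribution is a deep arithmetic statement presently out of reach in general; it can only be supported indirectly by comparison with known theorems (Heath-Brown, Swinnerton-Dyer, Kane for $p=2$ in quadratic twist families, and Bhargava-Shankar for $p=2,3$ on average), and by checking compatibility with Delaunay's heuristics for $\Sha$. Accordingly, what can rigorously be proved is the implication \emph{equidistribution $\Longrightarrow$ (a), (b), (c)}, together with the verification that the model reproduces all presently known unconditional results on Selmer distributions.
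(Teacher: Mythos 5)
This statement is a conjecture, and neither your proposal nor the paper proves it; your route is essentially the paper's own justification: Theorem~\ref{T:Selmer is intersection} (together with $\Sha^1(k,E[p])=0$ from Proposition~\ref{P:Sha^1=0}) exhibits $\Sel_p E$ as the intersection of the fixed compact open maximal isotropic image of $\prod_v E(k_v)/pE(k_v)$ with the discrete maximal isotropic image of $\HH^1(k,E[p])$, and the random model of Section~2 (Proposition~\ref{P:counting}, Theorem~\ref{T:probability measure for LCA group}, Proposition~\ref{P:moments}) then yields exactly the stated distribution and moments, with the equidistribution of the discrete subspace left, as you say, as the unprovable heuristic step supported only by the known results of Heath-Brown, Swinnerton-Dyer, Kane, and Bhargava--Shankar and by compatibility with Delaunay's heuristics. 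Your internal derivations differ only in detail (orbit counting under the orthogonal group for the finite-dimensional distribution, and a tuple-counting evaluation of moments, versus the paper's inductive fibration argument expressing $\dim(Z\cap W)$ as a sum of independent Bernoulli variables plus Cauchy's $q$-binomial theorem, with moments obtained by substituting $z=p^m$ into the generating function).
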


Several results in the direction of Conjecture~\ref{C:conjecture for Sel_p E}
are known:
\begin{itemize}
\item When $p=2$, 
Heath-Brown~\cite{Heath-Brown1994}*{Theorem~2}
proved the analogue of Conjecture~\ref{C:conjecture for Sel_p E}
for the family of quadratic twists 
of $y^2=x^3-x$ over $\Q$,
with the caveat that the distribution of $\dim \Sel_2 E$
is shifted by $+2$ to account for the contribution of $E[2]$ 
(cf.~Remark~\ref{R:adjustment}).
Swinnerton-Dyer~\cite{Swinnerton-Dyer2008} and 
Kane~\cite{Kane-preprint} generalized this result
to the family of quadratic twists
of any fixed elliptic curve $E/\Q$ with $E[2] \subseteq E(\Q)$
and no rational cyclic subgroup of order $4$.
\item For the family of all elliptic curves $E/\Q$
with $E[2] \subset E(\Q)$, G.~Yu~\cite{Yu2006}*{Theorem~1}
built upon Heath-Brown's approach
to prove that the average size of $\Sel_2 E$ is finite.
(Strictly speaking, if the limit defining the average does not exist,
the result holds with a lim sup.)
See also~\cite{Yu2005} for results for other families of elliptic
curves over $\Q$.
\item For the family of all elliptic curves over $\F_q(t)$ with $3 \nmid q$,
A.~J.~de~Jong~\cite{DeJong2002}
proved that the average size of $\Sel_3 E$ (in the lim sup sense) 
is at most $4+O(1/q)$,
where the $O(1/q)$ is an explicit rational function of $q$.
In fact, de~Jong speculated that the truth was $4$,
and that the same might hold for number fields.
\item For the family of all elliptic curves over $\Q$,
M.~Bhargava and A.~Shankar proved that
the average size of $\Sel_2$ is $3$ \cite{Bhargava-Shankar-preprint1} 
and 
the average size of $\Sel_3$ is $4$ \cite{Bhargava-Shankar-preprint2}.
\item 
For $E$ over a number field $k$ with a real embedding and with $E[2](k)=0$,
B.~Mazur and K.~Rubin~\cite{Mazur-Rubin2010} and Z.~Klagsbrun
showed how to twist judiciously
to obtain lower bounds on the number of 
quadratic twists of $E$ (up to a bound)
with prescribed $\dim \Sel_2$.
\end{itemize}

As has been observed by Bhargava,
since $\rk E(k) \le \dim_{\F_p} \Sel_p E$,
Conjecture~\ref{C:conjecture for Sel_p E}\eqref{I:average size for Sel_p E}
would imply that $\Prob(\rk E(k) \ge 2)$ is at most $(p+1)/p^2$.
If we assume this for an infinite sequence of primes $p$,
we conclude that asymptotically $100\%$ of elliptic curves over $k$
have rank $0$ or~$1$.

A priori, the average rank could still be greater than $1$ if
there were rare curves of very high rank,
but
Conjecture~\ref{C:conjecture for Sel_p E}\eqref{I:average size for Sel_p E}
for an infinite sequence of primes $p$ 
implies also that the elliptic curves of rank $\ge 2$
contribute nothing to the average value of $\rk E(k)$,
and in fact nothing to the average value of $p^{\rk E(k)}$.
(Proof: Define $e_p$ as the lim sup as $B \to \infty$ 
of the sum of $p^{\rk E(k)}$ over curves of rank $\ge 2$
of height bounded by $B$
divided by the total number of curves of height bounded by $B$.
If $q$ is a larger prime, then $e_p \le (p^2/q^2) e_q \le (p^2/q^2)(q+1)$,
which tends to $0$ as $q \to \infty$, so $e_p=0$.)

If in addition we assume that the parity of $\rk E(k)$ is equidistributed,
we obtain the following well-known conjecture:

\begin{conjecture}
\label{C:folklore}
Fix a global field $k$.
Asymptotically $50\%$ of elliptic curves over $k$ have rank~$0$, 
and $50\%$ have rank~$1$.
Moreover, the average rank is $1/2$.
\end{conjecture}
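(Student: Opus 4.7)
The plan is to deduce Conjecture \ref{C:folklore} from Conjecture \ref{C:conjecture for Sel_p E}\eqref{I:average size for Sel_p E}, applied along an infinite sequence of primes $p$, together with the assumed equidistribution of the parity of $\rk E(k)$. The standard embedding $E(k)/pE(k) \injects \Sel_p E$ gives $\#\Sel_p E \ge p^{\rk E(k)}$, and this reduces everything to Markov-type inequalities on $\Sel_p E$ as $p$ varies.

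First I would show that asymptotically $100\%$ of elliptic curves over $k$ have rank $0$ or $1$. By Markov's inequality, the fraction of curves with $\rk E(k) \ge 2$ satisfies
\[
  \Prob(\rk E(k) \ge 2) \;\le\; \frac{\Exp(\#\Sel_p E)}{p^2} \;=\; \frac{p+1}{p^2},
\]
using Conjecture \ref{C:conjecture for Sel_p E}\eqref{I:average size for Sel_p E}. Letting $p \to \infty$ through the given infinite sequence of primes gives $\Prob(\rk E(k) \ge 2) = 0$.

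Second I would show that the rare curves of rank $\ge 2$ do not spoil the average, following the argument sketched in the introduction: define $e_p$ as the lim sup as $B \to \infty$ of the average of $p^{\rk E(k)}$ restricted to curves of rank $\ge 2$ and height $\le B$. For any larger prime $q$ in the sequence, $p^{\rk E(k)} \le (p/q)^2 \, q^{\rk E(k)}$ on this set (since $\rk E(k) \ge 2$), so $e_p \le (p^2/q^2)\, e_q \le (p^2/q^2)(q+1)$, which tends to $0$ as $q \to \infty$. Hence $e_p = 0$; in particular the contribution of rank-$\ge 2$ curves to the average of $\rk E(k)$ itself is $0$ (since $r \le p^r$).

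Finally, I would combine the two previous steps with parity equidistribution: of the $100\%$ of curves having rank in $\{0,1\}$, exactly half must have even rank and half odd rank, giving $50\%$ rank $0$ and $50\%$ rank $1$, and the average rank is therefore $\tfrac{1}{2}\cdot 0 + \tfrac{1}{2}\cdot 1 = 1/2$. The main obstacle is entirely external to this derivation: the two input hypotheses (the average-size conjecture for $\Sel_p$ along infinitely many $p$, and parity equidistribution) are themselves deep open problems, so this really establishes Conjecture \ref{C:folklore} only conditionally; the formal derivation itself is short and elementary.
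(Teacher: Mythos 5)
Your derivation is correct and is essentially the same as the paper's: the paper presents Conjecture~\ref{C:folklore} exactly as the conditional consequence of Conjecture~\ref{C:conjecture for Sel_p E}\eqref{I:average size for Sel_p E} for an infinite sequence of primes (via $\#\Sel_p E \ge p^{\rk E(k)}$, Markov's inequality, and the same $e_p \le (p^2/q^2)e_q$ argument showing rank-$\ge 2$ curves contribute nothing to the average) together with equidistribution of the parity of $\rk E(k)$. You also rightly note that this is only a conditional derivation of a conjecture, which matches the paper's framing.
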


\begin{remark}
D.~Goldfeld conjectured that the average rank in a family of quadratic twists
of a fixed elliptic curve over $\Q$ 
was $1/2$~\cite{Goldfeld1979}*{Conjecture~B}.
Other evidence for Conjecture~\ref{C:folklore} 
was provided by the extensive study by N.~Katz and P.~Sarnak of 
a function field analogue~\cites{Katz-Sarnak1999a,Katz-Sarnak1999b}.
\end{remark}

Also, as has been observed by Rubin, the distribution in
Conjecture~\ref{C:conjecture for Sel_p E}\eqref{I:distribution for Sel_p E}
tends, as $p \to \infty$,
to the distribution assigning probability $50\%$ to each of $0$ and $1$.
Thus, even without assuming equidistribution of parity, 
Conjecture~\ref{C:conjecture for Sel_p E} for any infinite set of primes $p$
would imply not only that $100\%$ of elliptic curves have rank $0$ or $1$,
but also that at least $50\%$ have rank $0$,
and that the average rank is {\em at most} $1/2$.

Conjecture~\ref{C:conjecture for Sel_p E}\eqref{I:distribution for Sel_p E} 
for a single $p$
does not duplicate C.~Delaunay's prediction 
for $\dim_{\F_p} \Sha(E)[p]$ \cites{Delaunay2001,Delaunay2007}.
Instead the predictions complement each other:
we prove that the only distribution on $\rk E(\Q)$ 
compatible with both predictions 
is the one in Conjecture~\ref{C:folklore},
for which $\rk E(\Q)$ is $0$ or $1$, with probability $1/2$ each 
(see Theorem~\ref{T:Delaunay compatibility}).
A related result, that 
Conjectures \ref{C:conjecture for Sel_p E}\eqref{I:distribution for Sel_p E}
(for $p=2$) and~\ref{C:folklore} 
together imply the $\Sha[2]$ predictions for rank $0$ and $1$,
had been observed at the end of~\cite{Delaunay2007}.

If we also use the heuristic that the dimensions of $\dim_{\F_p} \Sel_p E$ 
for different $p$ are independent except for the constraint
that their parities are equal,
we are led to the following generalization of 
Conjecture~\ref{C:conjecture for Sel_p E}:

\begin{conjecture}
\label{C:conjecture for Sel_n E}
Fix a global field $k$ and let $n$ be a squarefree positive integer.
Let $\omega(n)$ be the number of prime factors of $n$.
  \begin{enumerate}[\upshape (a)]
  \item \label{I:distribution for Sel_n E}
Fix $d_p \in \Z_{\ge 0}$ for each prime $p$ dividing $n$.
As $E$ varies over all elliptic curves over $k$,
\[
	\Prob\left( \Sel_n E \isom \prod_{p|n} (\Z/p\Z)^{d_p} \right)
	= 2^{\omega(n)-1} \prod_{p|n} \left( \left( \prod_{j \ge 0} (1+p^{-j})^{-1} \right) 
	\left( \prod_{j=1}^{d_p} \frac{p}{p^j-1} \right) \right)
\]
if the $d_p$ all have the same parity, 
and the probability is $0$ otherwise.
\item \label{I:average size for Sel_n E}
The average of $\#\Sel_n E$ over all $E/k$ is 
the sum of the divisors of $n$.
\item \label{I:moments for Sel_n E}
For $m \in \Z_{\ge 0}$, 
the average of $(\#\Sel_n E)^m$ over all $E/k$ is 
$\prod_{p|n} \prod_{i=1}^m (p^i+1)$.
  \end{enumerate}
\end{conjecture}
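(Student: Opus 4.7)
My plan is to derive Conjecture~\ref{C:conjecture for Sel_n E} from Conjecture~\ref{C:conjecture for Sel_p E} together with the heuristic stated just above it, that $\dim_{\F_p}\Sel_p E$ for different primes $p \mid n$ are independent subject only to the constraint that their parities all agree.  Since $n$ is squarefree, $E[n] \cong \bigoplus_{p\mid n} E[p]$, and this decomposition is respected by Galois cohomology and by the local Selmer conditions at each place, so $\Sel_n E \isom \bigoplus_{p\mid n}\Sel_p E$.  In particular the event $\Sel_n E \isom \prod_{p\mid n}(\Z/p\Z)^{d_p}$ is the event $\{\dim_{\F_p}\Sel_p E = d_p\ \forall p\mid n\}$, and $\#\Sel_n E = \prod_{p\mid n} p^{\dim_{\F_p}\Sel_p E}$.

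The key input is a single $q$-series identity.  Writing $q = p^{-1}$ and $f_p(d) := \Prob(\dim_{\F_p}\Sel_p E = d)$, the formula in Conjecture~\ref{C:conjecture for Sel_p E}(a) rewrites as
\[
f_p(d) \;=\; C_p\,\frac{q^{d(d-1)/2}}{(q;q)_d}, \qquad C_p := \prod_{j\ge 0}(1+p^{-j})^{-1}.
\]
Applied at $z = -p^m$ for any integer $m \ge 0$, Euler's identity $\sum_{d\ge 0} z^d q^{d(d-1)/2}/(q;q)_d = (-z;q)_\infty$ then yields
\[
\sum_{d\ge 0} (-1)^d\, p^{md}\, f_p(d) \;=\; C_p\,(p^m;q)_\infty \;=\; 0,
\]
because the $j = m$ factor of $\prod_{j\ge 0}(1-p^{m-j})$ vanishes.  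For $m = 0$ this says that under Conjecture~\ref{C:conjecture for Sel_p E}(a) the parity of $\dim_{\F_p}\Sel_p E$ is equidistributed; for $m\ge 1$ it says $M_p^{+,m} = M_p^{-,m}$, where $M_p^{\pm,m} := \sum_{d \text{ even/odd}} p^{md}f_p(d)$, and by Conjecture~\ref{C:conjecture for Sel_p E}(c) the common value is $\tfrac12\prod_{i=1}^m(p^i+1)$.

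Part (a) now follows by conditioning.  Under pure independence the probability that all parities agree is $\prod_{p\mid n}\tfrac12 + \prod_{p\mid n}\tfrac12 = 2^{1-\omega(n)}$, so passing to the conditional distribution on that event rescales the independent joint probability $\prod_{p\mid n} f_p(d_p)$ by exactly $2^{\omega(n)-1}$, yielding the formula in (a) when the $d_p$ share a common parity and $0$ otherwise.  Parts (b) and (c) follow by summing against $p^{md_p}$: from (a) and the two identities above,
\[
\Exp\bigl[(\#\Sel_n E)^m\bigr] \;=\; 2^{\omega(n)-1}\Bigl(\prod_{p\mid n} M_p^{+,m} \,+\, \prod_{p\mid n} M_p^{-,m}\Bigr) \;=\; \prod_{p\mid n}\prod_{i=1}^m (p^i+1),
\]
with (b) the case $m=1$, where the factor $p+1$ gives the sum-of-divisors formula for $n$.

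The one step with real content is the vanishing $\sum_d (-1)^d p^{md} f_p(d) = 0$ for all integers $m \ge 0$; without it the parity constraint in the heuristic would perturb each moment and destroy the factorization over $p\mid n$ on the right-hand sides of (a)--(c).  Everything else is Chinese Remainders plus bookkeeping of the independence-plus-parity heuristic against Conjecture~\ref{C:conjecture for Sel_p E}.
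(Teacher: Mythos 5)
Your derivation is correct and is essentially the paper's own route: the statement is a conjecture obtained from Conjecture~\ref{C:conjecture for Sel_p E} plus the independence-up-to-parity heuristic, with the factor $2^{\omega(n)-1}$ explained exactly by your conditioning argument. The $q$-series facts you prove via Euler's identity (parity equidistribution and equality of the even/odd partial moments, i.e.\ the vanishing of $\sum_d(-1)^d p^{md}f_p(d)$) are precisely Proposition~\ref{P:moments}(b),(c) in the paper, proved there by the same substitutions $z=-1$ and $z=-p^m$ in the generating function.
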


The factor of $2^{\omega(n)-1}$ arises in~\eqref{I:distribution for Sel_n E},
because only $2$ of the $2^{\omega(n)}$ 
choices of parities for $p \mid n$ are constant sequences.

\begin{remark}
Based on investigations for $n \le 5$,
Bhargava and Shankar have proposed
Conjecture~\ref{C:conjecture for Sel_n E}\eqref{I:average size for Sel_n E} 
for {\em all} positive integers $n$, at least for $k=\Q$.
\end{remark}

\begin{remark}
As was noticed during a discussion with Bhargava and Kane,
if we combine Delaunay's heuristics for $\Sha[n]$
with Conjecture~\ref{C:folklore} for varying $E/\Q$, 
we can predict the distribution
for the abelian group $\Sel_n E$ 
for {\em any} fixed positive integer~$n$.
Namely, $E(\Q)_\tors=0$ with probability $1$,
and in that case the term on the left in

\[
	0 \to \frac{E(\Q)}{nE(\Q)} \to \Sel_n E \to \Sha(E)[n] \to 0
\]
is free, so the sequence of $\Z/n\Z$-modules splits;
thus, given Conjecture~\ref{C:folklore}, 
the distribution of $\Sel_n E$ can be deduced from
knowing the distribution of $\Sha(E)[n]$ 
for rank $0$ curves and for rank $1$ curves.
\end{remark}

\subsection{Conjectures for abelian varieties}

In fact, our theorems are proved in a more general context, 
with $[2] \colon E \to E$
replaced by any self-dual isogeny $\lambda \colon A \to \Adual$ 
that is of odd degree
or that comes from a symmetric line sheaf 
on an abelian variety $A$ over a global field $k$.
(See Theorem~\ref{T:Selmer is intersection}.)
In this setting, we have a surprise:
it is not $\Sel_\lambda A$ itself that is the intersection of 
maximal isotropic subgroups, but its quotient by $\Sha^1(k,A[\lambda])$,
and the latter group is sometimes nonzero,
as we explain in Section~\ref{S:Sha}.
Moreover, for certain families, such as the family of all genus~$2$ curves,
there may be ``causal'' subgroups of $\Sel_\lambda A$, 
which increase its expected size by a constant factor.
Taking these into account suggests the following:

\begin{conjecture}
\label{C:conjecture for odd degree hyperelliptic}
Fix a global field $k$ of characteristic not $2$, 
and fix a positive integer $g$.
Let $f \in k[x]$ range over separable polynomials of degree $2g+1$ 
(with coefficients of height bounded by $B$, with $B \to \infty$).
Let $X$ be the smooth projective model of $y^2=f(x)$.
Construct the Jacobian $A:=\Jac X$.
Then the analogues of 
Conjectures \ref{C:conjecture for Sel_p E}, 
\ref{C:folklore},
and~\ref{C:conjecture for Sel_n E} hold for $\Sel_p A$ and $\Sel_n A$,
with the same distributions.
They hold also with $2g+1$ replaced by $2g+2$ {\em if $n$ is odd}.
\end{conjecture}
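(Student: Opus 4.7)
The plan is to apply Theorem~\ref{T:Selmer is intersection} to realize $\Sel_p A$ (and more generally $\Sel_n A$) as essentially the intersection of a discrete maximal isotropic subspace with a compact open maximal isotropic subspace in a locally compact quadratic space over $\F_p$, and then to model the discrete subspace as random according to the measure constructed in the paper. Once this is in place, the distributional predictions, the averages, and the rank heuristic all follow from the same combinatorial computation that yielded Conjectures~\ref{C:conjecture for Sel_p E}, \ref{C:folklore}, and~\ref{C:conjecture for Sel_n E}: the combinatorics of intersections of maximal isotropics in a hyperbolic quadratic space depends only on the ambient space, not on the dimension $g$ of the Jacobian.

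For $\deg f = 2g+1$, the rational Weierstrass point $\infty \in X(k)$ supplies both a canonical Abel--Jacobi embedding $X \injects A$ and a canonical rational theta characteristic $(g-1)\infty$, so the principal polarization $\lambda \colon A \isomto \Adual$ comes from a symmetric line sheaf defined over $k$. The hypotheses of Theorem~\ref{T:Selmer is intersection} are therefore satisfied for $[p]$ (and indeed for $[n]$). One then needs to check that the ``causal'' subgroups do not intervene, i.e.\ that $\Sha^1(k, A[p]) = 0$ and $A[p](k) = 0$ for a density-$1$ family of $f$; both follow from a standard \v{C}ebotarev argument applied to the mod-$p$ Galois representation on $A[p]$, whose image is as large as possible (the full Weyl action on the Weierstrass set for $p=2$, the full symplectic similitude group for odd $p$) for a density-$1$ set of $f$.

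For $\deg f = 2g+2$, the two points at infinity need not be individually rational, so the canonical theta-characteristic construction above fails at the prime $2$ and an extra causal contribution enters $\Sel_2 A$, shifting its distribution. Requiring $n$ odd sidesteps this: the isogeny $[n]$ can still be covered by a symmetric line sheaf of odd degree arising from the principal polarization, Theorem~\ref{T:Selmer is intersection} applies unchanged, and the argument of the preceding paragraph goes through verbatim.

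The decisive step, and the one that keeps this at the level of a conjecture, is justifying the randomness hypothesis: one must prove that as $f$ ranges by height, the image of $\HH^1(k, A[p])$ in the restricted product $\prod'_v \HH^1(k_v, A[p])$ equidistributes with respect to our probability measure on maximal isotropic subspaces. This appears well beyond current technology; I expect it to be the main obstacle, essentially indefinitely. The strongest existing evidence is the Bhargava--Shankar computation of the first moment for $(g,p) \in \{(1,2),(1,3)\}$, whose values $3$ and $4$ match our prediction $p+1$ for Conjecture~\ref{C:conjecture for Sel_p E}\eqref{I:average size for Sel_p E}; extending even a single higher moment to $g \ge 2$ would likely require substantially new orbit-counting input, and proving full equidistribution is out of reach with present methods.
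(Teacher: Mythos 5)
Since the statement is a conjecture, the paper offers only heuristic support for it, and your proposal reproduces that rationale essentially verbatim: Theorem~\ref{T:Selmer is intersection} combined with the vanishing of $\Sha^1(k,A[\lambda])$ (Propositions~\ref{P:Sha^1=0} and~\ref{P:Sha usually 0}, Remark~\ref{R:extension of Sha usually 0}) and the rational theta characteristic $(g-1)\infty$ in the odd-degree case, with the equidistribution of the images of the global classes left, as you honestly say, as the unproven modeling hypothesis. One small repair: for $\deg f = 2g+2$ and odd $n$, the self-dual isogeny $n\mu$ (with $\mu$ the principal polarization) has odd degree $n^{2g}$, so Case~I of Section~\ref{S:Heisenberg} applies directly; it need not be of the form $\phi_\LL$ for a symmetric line sheaf (indeed $c_{n\mu}=c_\mu$ can be nonzero when the curve has no rational theta characteristic), though this does not affect your conclusion that Theorem~\ref{T:Selmer is intersection} applies.
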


\begin{conjecture}
\label{C:even genus}
Fix a global field $k$ of characteristic not $2$, 
and fix an {\em even} positive integer $g$.
Let $f \in k[x]$ range over polynomials of degree $2g+2$ 
(with coefficients of height bounded by $B$, with $B \to \infty$).
Let $X$ be the smooth projective model of $y^2=f(x)$,
and let $A=\Jac X$.
  \begin{enumerate}[\upshape (a)]
  \item\label{I:+1}
If $X_{\Sel_2}$ is the $\Z_{\ge 0}$-valued random variable
predicted by 
Conjecture~\ref{C:conjecture for Sel_p E}\eqref{I:distribution for Sel_p E}
to model $\dim_{\F_2} \Sel_2 E$,
then the analogous random variable for $\Sel_2 A$ is $X_{\Sel_2}+1$.
\item 
The average of $\#\Sel_2 A$ is $6$ (instead of $3$).
\item
For $m \in \Z_{\ge 0}$, 
the average of $(\#\Sel_2 A)^m$ is $2^m \prod_{i=1}^m (2^i+1)$.
\end{enumerate}
\end{conjecture}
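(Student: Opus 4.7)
My plan is to deduce Conjecture~\ref{C:even genus} from the main structural theorem of the paper (Theorem~\ref{T:Selmer is intersection}), which realizes $\Sel_2 A$, up to $\Sha^1(k,A[2])$, as the intersection of two maximal isotropic subgroups in a locally compact quadratic space $(V,Q)$ over $\F_2$ built from the restricted direct product of the local cohomology groups $\HH^1(k_v,A[2])$ with the Heisenberg--Weil quadratic form. Parts~(a)--(c) will then follow from identifying the feature of the even-degree, even-genus hyperelliptic setting that forces a one-dimensional shift, and applying the combinatorial distribution of $\dim(Z\cap W)$ for random maximal isotropic $Z,W$.

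First I would describe the quadratic form $Q$ explicitly. The Heisenberg-group construction of the paper depends on a choice of symmetric line sheaf on $A$ inducing the principal polarization $\lambda$; equivalently, on the curve $X$ this is a theta characteristic, a line sheaf $L$ with $L^{\otimes 2}\cong K_X$. For $y^2=f(x)$ with $\deg f=2g+2$, the natural $k$-rational candidate is the divisor class $\tfrac{g-1}{2}(\infty_++\infty_-)$, which is an integral $k$-rational class when $g$ is odd but only half-integral when $g$ is even---and generically no integral $k$-rational theta characteristic exists, since $L$ must have odd degree $g-1$ and this cannot be produced from $\infty_++\infty_-$ alone when $\infty_\pm$ are a conjugate pair. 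A Galois-cohomology calculation, in the spirit of Zarhin's quadratic refinement used in the paper, then shows that this obstruction contributes a nontrivial Arf class, producing a canonical $\F_2$-line $C \subset \Sel_2 A$: the ``causal'' subgroup that accounts for the $+1$ shift.

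Next, I would apply the probability measure on maximal isotropic subgroups constructed in the paper, modeling the image of $\HH^1(k,A[2])$ in $V$ as uniformly random among maximal isotropic subgroups containing $C$. The intersection with the fixed compact open maximal isotropic (the product of local images $\prod_v W_v$) then has distribution $X_{\Sel_2}+1$ by the distribution theorem proved in the paper for $\dim(Z\cap W)$, giving~(a). Parts~(b) and~(c) follow formally from $\Exp[2^{X_{\Sel_2}+1}]=2\cdot 3=6$ and $\Exp[2^{m(X_{\Sel_2}+1)}]=2^m\prod_{i=1}^m(2^i+1)$.

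The main obstacle is the equidistribution input: one must show that, as $f$ ranges over degree-$(2g+2)$ polynomials with coefficients of bounded height, the image of $\HH^1(k,A[2])$ in $V$ is equidistributed with respect to the paper's measure among maximal isotropic subgroups containing $C$. This is exactly the obstacle that prevents the paper from proving its basic distributional conjecture even in the elliptic-curve case (Conjecture~\ref{C:conjecture for Sel_p E}), and it presumably requires geometry-of-numbers techniques of the Bhargava--Shankar type, suitably adapted from the odd-degree hyperelliptic setting to the degree-$(2g+2)$ case where the pair of points at infinity and the singular affine compactification introduce additional technical complications.
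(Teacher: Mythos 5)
This statement is a conjecture, and the paper does not prove it either: what the paper offers is the rationale of Example~\ref{E:Sel_2 of genus 2 curve} (resting on Remarks \ref{R:4-torsion again} and~\ref{R:4-torsion} and Proposition~\ref{P:Sha usually 0}), and your proposal reproduces that heuristic in outline, while correctly acknowledging that the missing equidistribution input is exactly what separates the heuristic from a theorem. So no route could have closed the gap; the comparison below is between heuristics.

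The mechanism you give differs from the paper's in a way worth noting. You posit a ``causal'' line $C\subset\Sel_2 A$ arising from the non-existence of a rational theta characteristic, and then condition the random maximal isotropic $Z$ to contain $C$. The paper derives the shift structurally: by the identity $x \underset{e_2}{\cup} x = x \underset{e_2}{\cup} c_\calT$ of Remark~\ref{R:4-torsion again}, the quadratic form $Q$ is genuinely $\tfrac14\Z/\Z$-valued precisely when $c_\calT \notin \Sha^1(k,A[2])$, and then Remark~\ref{R:4-torsion} shows that \emph{every} maximal isotropic subgroup automatically contains the canonical vector (the image of $c_\calT$); this simultaneously explains why $c_\calT$ lies in $\Sel_2 A$ (a theorem of Poonen--Stoll cited in Remark~\ref{R:4-torsion again}, not a formal consequence of the absence of a rational theta characteristic, as your ``Arf class'' sentence suggests) and yields the $+1$ by passing to $(\F_2 c)^\perp/\F_2 c$, with no conditioning imposed by hand. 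Two inputs you gloss over are needed even at the heuristic level: first, $\Sha^1(k,A[2])=0$ for $100\%$ of such $f$ (Proposition~\ref{P:Sha usually 0} with Remark~\ref{R:extension of Sha usually 0}), so that $\Sel_2 A$ itself, and not merely $\Sel_2 A/\Sha^1(k,A[2])$, is modeled by the intersection; second, the shift requires not just $c_\calT\neq 0$ globally (which is all your half-integrality argument about $\tfrac{g-1}{2}(\infty_++\infty_-)$ addresses, and where evenness of $g$ enters via $\calT\isom\calW_{g-1}$) but $c_\calT\notin\Sha^1(k,A[2])$, i.e.\ nonvanishing of some local image $c_{\calT,v}$, which the paper extracts from the proof of Proposition~\ref{P:Sha usually 0}. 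Your deduction of parts (b) and (c) from the shifted variable is correct.
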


Example~\ref{E:Sel_2 of genus 2 curve} will explain the rationale
for the $+1$ in Conjecture~\ref{C:even genus}\eqref{I:+1},
and will explain why we do not venture to make 
an analogous conjecture for odd $g$.

\begin{remark}
Although we have formulated 
conjectures for the family of all curves of a specified type,
our model makes sense also for more limited families
(e.g., a family of quadratic twists).
One should take into account systematic contributions
to the Selmer group, however,
as was necessary in Conjecture~\ref{C:even genus}.
\end{remark}

\section{Random maximal isotropic subspaces}

\subsection{Quadratic modules}
\label{S:quadratic modules}

See~\cite{Scharlau1985}*{1.\S6 and 5.\S1} for the definitions of this section.
Let $V$ and $T$ be abelian groups.
Call a function $Q \colon V \to T$ 
a ($T$-valued) \defi{quadratic form} if 
$Q$ is a quadratic map (i.e., 
the symmetric pairing $\langle\;,\;\rangle \colon V \times V \to T$
sending $(x,y)$ to $Q(x+y)-Q(x)-Q(y)$ is bilinear)
and $Q(av)=a^2 Q(v)$ for every $a \in \Z$ and $v \in V$.
Then $(V,Q)$ is called a \defi{quadratic module}.

\begin{remark}
\label{R:quadratic map vs quadratic form}
A quadratic map $Q$ satisfying the identity $Q(-v)=Q(v)$ is a quadratic form.
(Taking $x=y=0$ shows that $Q(0)=0$, 
and then $Q(av+(-v))-Q(av)-Q(-v)=a(Q(0)-Q(v)-Q(-v))$
computes $Q(av)$ for other $a \in \Z$ by induction.)
\end{remark}

\begin{lemma}
\label{L:killed by l or 2l}
Let $(V,Q)$ be a quadratic module.
Suppose that $v \in V$ and $\ell \in \Z$ are such that $\ell v=0$.
If $\ell$ is odd, then $\ell Q(v)=0$.
If $\ell$ is even, then $2\ell Q(v)=0$.
\end{lemma}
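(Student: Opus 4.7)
The plan is to produce two separate multiples of $Q(v)$ in $T$ that must vanish, and then combine them by a Bezout argument. The two multiples will come from the two structural identities defining a quadratic form: the homogeneity $Q(av)=a^{2}Q(v)$, and the bilinearity of the associated symmetric pairing $\langle x,y\rangle=Q(x+y)-Q(x)-Q(y)$. From the first, taking $a=0$ gives $Q(0)=0$, and then taking $a=\ell$ together with the hypothesis $\ell v=0$ yields
\[
0 \;=\; Q(0) \;=\; Q(\ell v) \;=\; \ell^{2}\,Q(v),
\]
so $\ell^{2}$ annihilates $Q(v)$. This alone is the conclusion when $\ell$ is a prime power of the right shape, but it is too weak in general and must be supplemented.

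For the second multiple I would compute $\langle v,v\rangle=Q(2v)-2Q(v)=4Q(v)-2Q(v)=2Q(v)$, and then exploit bilinearity of $\langle\cdot,\cdot\rangle$ in the first slot together with $\ell v=0$:
\[
0 \;=\; \langle \ell v,v\rangle \;=\; \ell\,\langle v,v\rangle \;=\; 2\ell\,Q(v).
\]
This immediately proves the even case. For the odd case I would combine the two annihilators: since $\gcd(\ell,2)=1$ one has $\gcd(\ell^{2},2\ell)=\ell$, so Bezout supplies $r,s\in\Z$ with $r\ell^{2}+2s\ell=\ell$, whence $\ell Q(v)=r(\ell^{2}Q(v))+s(2\ell Q(v))=0$.

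There is no real obstacle; the only thing to notice is that homogeneity by itself gives only $\ell^{2}Q(v)=0$, which fails to imply $\ell Q(v)=0$ when $\ell$ has repeated prime factors, and the missing input is precisely the identity $\langle v,v\rangle=2Q(v)$ fed through the linearity of $\langle\cdot,\cdot\rangle$.
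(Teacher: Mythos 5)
Your proof is correct and follows the same route as the paper: the two annihilation relations $\ell^{2}Q(v)=Q(\ell v)=0$ and $2\ell Q(v)=\ell\langle v,v\rangle=\langle \ell v,v\rangle=0$ are exactly the paper's argument, with your Bezout step merely making explicit the gcd deduction the paper leaves implicit for odd $\ell$.
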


\begin{proof}
We have $\ell^2 Q(v) = Q(\ell v) = 0$,
and $2\ell Q(v) = \ell \langle v,v \rangle = \langle \ell v,v \rangle = 0$.
\end{proof}

Given a subgroup $W \subseteq V$, let 
$W^\perp := \{v \in V : \langle v,w \rangle = 0 \textup{ for all $w \in W$}\}$.
Call $W$ a \defi{maximal isotropic subgroup} of $(V,Q)$
if $W^\perp=W$ and $Q|_W=0$.
Let $\calI_V$ be the set of maximal isotropic subgroups of $(V,Q)$.

\begin{remark}
Say that $W$ is maximal isotropic {\em for the pairing} $\langle\;,\;\rangle$
if $W^\perp=W$.
If $W=2W$ or $T[2]=0$, then $W^\perp=W$ implies $Q|_W=0$,
but in general $Q|_W=0$ is a nonvacuous extra condition.
\end{remark}

Call a quadratic module $(V,Q)$ \defi{nondegenerate} 
if $Q$ is $\R/\Z$-valued and $V$ is finite 
(we will relax this condition in Section~\ref{S:locally compact})
and the homomorphism $V \to V^*:=\Hom(V,\R/\Z)$ defined by 
$v \mapsto (w \mapsto \langle v,w \rangle)$ is an isomorphism.
Call $(V,Q)$ \defi{weakly metabolic} 
if it is nondegenerate and contains a maximal isotropic subgroup.
(\defi{Metabolic} entails the additional condition that the subgroup
be a direct summand.)

\begin{remark}
\label{R:subquotient}
Suppose that $(V,Q)$ is a nondegenerate quadratic module,
and $X$ is an isotropic subgroup of $(V,Q)$.
Then
\begin{enumerate}[\upshape (a)]
\item\label{I:subquotient is quadratic module}
The quotient $X^\perp/X$ is a nondegenerate quadratic module
under the quadratic form $Q_X$ induced by $Q$.
\item\label{I:subquotient pushing subgroups}
If $W \in \calI_V$,
then $(W \intersect X^\perp)+X \in \calI_V$, 
and $((W \intersect X^\perp) + X)/X \in \calI_{X^\perp/X}$.
Let $\pi^{V,X^\perp/X}(W)$ denote this last subgroup,
which is the image of $W \intersect X^\perp$ in $X^\perp/X$.
\end{enumerate}
\end{remark}

\begin{remark}
\label{R:Witt group}
If $(V,Q)$ is a nondegenerate quadratic module with $\#V <\infty$,
the obstruction to $V$ being weakly metabolic
is measured by an abelian group $\WQ \isom \Directsum_p \WQ(p)$
called the 
\defi{Witt group of nondegenerate quadratic forms on finite abelian groups} 
\cite{Scharlau1985}*{5.\S1}.
The obstruction for $(V,Q)$ equals the obstruction for 
$X^\perp/X$ for any isotropic subgroup $X$ of $(V,Q)$ 
(cf.~\cite{Scharlau1985}*{Lemma~5.1.3}).
\end{remark}

\subsection{Counting subspaces}

\begin{proposition}
\label{P:counting}
Let $(V,Q)$ be a $2n$-dimensional 
weakly metabolic quadratic space over $F:=\F_p$,
with $Q$ taking values in $\frac{1}{p}\Z/\Z \isom F$.
\begin{enumerate}[\upshape (a)]
\item\label{I:equal size fibers}
All fibers of $\pi^{V,X^\perp/X} \colon \calI_V \to \calI_{X^\perp/X}$ 
have size $\prod_{i=1}^{\dim X} (p^{n-i}+1)$.
\item \label{I:number of maximal isotropic subspaces}
We have $\#\calI_V = \prod_{j=0}^{n-1} (p^j+1)$.
\item \label{I:probability of given intersection dim}
Let $W$ be a fixed maximal isotropic subspace of $V$.
Let $X_n$ be the random variable $\dim(Z \intersect W)$,
where $Z$ is chosen uniformly at random from $\calI_V$.
Then $X_n$ is a sum of independent Bernoulli random variables $B_1,\ldots,B_n$
where $B_i$ is $1$ with probability $1/(p^{i-1}+1)$ and $0$ otherwise.
\item\label{I:generating function}
For $0 \le d \le n$, let $a_{d,n}:=\Prob(X_n=d)$,
and let $a_d:=\lim_{n \to \infty} a_{d,n}$.
Then 
\begin{align*}
	\sum_{d \ge 0} a_{d,n} z^d 
	&= \prod_{i=0}^{n-1} \frac{z+p^i}{1+p^i} 
		= \prod_{i=0}^{n-1} \frac{1 + p^{-i} z}{1 + p^{-i}}. \\
	\sum_{d \ge 0} a_d z^d 
		&= \prod_{i=0}^\infty \frac{1 + p^{-i} z}{1 + p^{-i}}. \\
\end{align*}
\item\label{I:adn}
For $0 \le d \le n$, we have
\[
	a_{d,n} = \prod_{j=0}^{n-1} (1+p^{-j})^{-1}
		\prod_{j=1}^d \frac{p}{p^j-1}
		\prod_{j=0}^{d-1} (1-p^{j-n}).
\]
\item\label{I:ad}
For $d \ge 0$, we have
\[
	a_d = c \prod_{j=1}^d \frac{p}{p^j-1},
\]
where 
\[
	c := \prod_{j \ge 0} (1+p^{-j})^{-1} 
	= \frac12 \prod_{i \ge 0} (1-p^{-(2i+1)}).
\]
\end{enumerate}
\end{proposition}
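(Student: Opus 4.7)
The six parts are interlocking: (b) is a specialization of (a), (c) uses (a), and (d)--(f) follow from (c) by generating-function manipulations. I would organize the proof in that order.

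For (a), I induct on $\dim X$. Writing any $X$ with $\dim X \ge 2$ as $X_1 + \F_p x$ for a codimension-one isotropic subspace $X_1$, the map $\pi^{V,X^\perp/X}$ factors as $\pi^{X_1^\perp/X_1,X^\perp/X} \circ \pi^{V,X_1^\perp/X_1}$ (both factors well-defined by Remark~\ref{R:subquotient}), so fiber sizes multiply and it suffices to handle $\dim X = 1$. Writing $X = \F_p x$ and letting $\tilde{W}' \subseteq X^\perp$ be the preimage of $W' \in \calI_{X^\perp/X}$, I split the fiber over $W'$ according to whether $X \subseteq W$. If $X \subseteq W$, then $W \subseteq X^\perp$ and $W = \tilde{W}'$, contributing one element. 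If $X \not\subseteq W$, then $U := W \cap X^\perp$ is $(n-1)$-dimensional and sits inside $\tilde{W}'$ as a complement to $X$; there are $p^{n-1}$ such complements. For each such $U$, the $2$-dimensional quotient $U^\perp/U$ contains the isotropic line $(X+U)/U$, so it is hyperbolic and has exactly two isotropic lines; the other one determines the unique $W \supset U$ in the fiber. Hence the fiber has size $1 + p^{n-1}$, completing (a). Specializing to $X$ a maximal isotropic subspace then gives $X^\perp/X = 0$ and $\#\calI_V = \prod_{i=1}^n(p^{n-i}+1) = \prod_{j=0}^{n-1}(p^j+1)$, which is (b).

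For (c), I fix a nonzero $x \in W$ and set $X = \F_p x \subseteq W$; by Remark~\ref{R:subquotient}, $W/X \in \calI_{X^\perp/X}$. For $Z$ uniform on $\calI_V$, part (a) implies $\pi(Z) := \pi^{V,X^\perp/X}(Z)$ is uniform on $\calI_{X^\perp/X}$, and that within each fiber exactly one element contains $X$ (namely, the preimage of $\pi(Z)$ in $X^\perp$). Hence $B_n := \mathbf{1}[x \in Z]$ is Bernoulli with parameter $1/(p^{n-1}+1)$ and is independent of $\pi(Z)$. A short case analysis, using $W \subseteq X^\perp$, establishes that
$$
\dim(Z \cap W) = B_n + \dim\bigl(\pi(Z) \cap (W/X)\bigr).
$$
Indeed, if $x \in Z$ then $\pi(Z) = Z/X$ and $(Z/X) \cap (W/X) = (Z \cap W)/X$, yielding a drop of $1$; if $x \notin Z$ then $X \cap (Z \cap W) = 0$ and $Z \cap W = Z \cap X^\perp \cap W$ injects into $X^\perp/X$. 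Applying the induction hypothesis to the $(2n-2)$-dimensional weakly metabolic quadratic space $X^\perp/X$ with maximal isotropic $W/X$ produces $B_1,\dots,B_{n-1}$ with the claimed parameters.

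Parts (d)--(f) are algebraic. For (d), the generating function of a sum of independent Bernoullis factors, yielding $\Exp[z^{X_n}] = \prod_{i=1}^n (z + p^{i-1})/(1 + p^{i-1})$, and the infinite product follows by letting $n \to \infty$. For (e), I expand $\prod_{i=0}^{n-1}(1 + p^{-i}z)$ via the $q$-binomial theorem with $q = p^{-1}$ and simplify. For (f), in the formula of (e) one has $\prod_{j=0}^{d-1}(1 - p^{j-n}) \to 1$ and $\prod_{j=0}^{n-1}(1 + p^{-j})^{-1} \to c$ as $n \to \infty$; the alternative expression $c = \tfrac{1}{2}\prod_{i \ge 0}(1 - p^{-(2i+1)})$ follows from $(1 + p^{-j})(1 - p^{-j}) = 1 - p^{-2j}$, which telescopes $\prod_{j \ge 1}(1 + p^{-j}) = \prod_{i \ge 0}(1 - p^{-(2i+1)})^{-1}$, together with the factor $2$ from $1 + p^0$. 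The main obstacle is the $\dim X = 1$ case of (a), where one must produce a unique isotropic $W$ for each of the $p^{n-1}$ complements~$U$; this rests on the hyperbolic structure of $U^\perp/U$ and the dichotomy of isotropic lines in a $2$-dimensional hyperbolic space. Once (a) is in hand, (c) is a clean Markov-style recursion and (d)--(f) are routine.
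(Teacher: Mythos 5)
Your proof is correct and follows essentially the same route as the paper: reduce (a) to $\dim X=1$ via a flag so that fiber sizes multiply, count the fiber as $p^{n-1}+1$ using the two isotropic lines of the $2$-dimensional hyperbolic quotient, specialize to a maximal isotropic $X$ for (b), run the same Bernoulli recursion conditioned on $\pi(Z)$ for (c), and obtain (d)--(f) from the product generating function, the $q$-binomial theorem with $q=p^{-1}$, and passage to the limit. A minor bonus: your parametrization of the non-special fiber elements by the $p^{n-1}$ complements of $X$ in the preimage $\tilde{W}'$ is stated a bit more precisely than the paper's bijection with \emph{all} codimension-one subspaces of $W$ together with $W$, since the hyperplanes through $x$ must be excluded (their second isotropic companion contains $x$ and lies in a different fiber) for the count $p^{n-1}+1$ to come out.
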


\begin{proof}\hfill
\begin{enumerate}[\upshape (a)]
\item
Choose a full flag in $X$;
then $\pi^{V,X^\perp/X}$ factors into $\dim X$ maps of the same type, 
so we reduce to the case $\dim X=1$.
Write $X=Fv$ with $v \in V$.
For $Z \in \calI_V$, let $\overline{Z}$ be its image in $\calI_{X^\perp/X}$.
There is a bijection
$\{Z \in \calI_V : v \in Z\} \to \calI_{X^\perp/X}$
defined by $Z \mapsto \overline{Z} = Z/X$.

Fix $\overline{W} \in \calI_{X^\perp/X}$, 
and let $W \in \calI_V$ be such that $\overline{W}=W/X$.
We want to show that 
$\#\{Z \in \calI_V:\overline{Z}=\overline{W}\} = p^{n-1}+1$.
This follows once we show that the map
\begin{align*}
	\{Z \in \calI_V: \overline{Z}=\overline{W}\} 
	&\to \{\textup{codimension $1$ subspaces of $W$}\} \union \{W\} \\
	Z &\mapsto Z \intersect W
\end{align*}
is a bijection.
If $v \in Z$, then $\overline{Z}=\overline{W}$ implies that $Z=W$.
If $v \notin Z$,
then $\overline{Z}=\overline{W}$ implies that 
$Z \intersect W$ has codimension $1$ in $W$.
Conversely, for a given $W_1$ of codimension $1$ in $W$,
the $Z \in \calI_V$ containing $W_1$
are in bijection with the maximal isotropic subspaces
of the weakly metabolic $2$-dimensional space $W_1^\perp/W_1$,
which is isomorphic to $(F^2,xy)$, so there are two such $Z$:
one of them is $W$, and the other satisfies $Z \intersect W = W_1$
and $\overline{Z}=\overline{W}$.
Thus we have the bijection.
\item
Apply~\eqref{I:equal size fibers} to a maximal isotropic $X$.
\item 
If $n>0$, fix a nonzero $v$ in $W$, and define $\overline{Z}$
as in the proof of~\eqref{I:equal size fibers}.
Then
\[
	\dim Z \intersect W = \dim \overline{Z} \intersect \overline{W}
		+ \delta_{v \in Z},
\]
where $\delta_{v\in W}$ is $1$ if $v \in Z$ and $0$ otherwise.
The term $\dim \overline{Z} \intersect \overline{W}$ 
has the distribution $X_{n-1}$.
Conditioned on the value of $\overline{Z}$,
the term $\delta_{v \in Z}$ is $1$ with probability $1/(p^{n-1}+1)$
and $0$ otherwise, since there are $p^{n-1}+1$ subspaces $Z \in \calI_V$
with the given $\overline{Z}$,
and only one of them 
(namely, the preimage of $\overline{Z}$ under $V \to V/Fv$) 
contains $v$.
Thus $X_n$ is the sum of $X_{n-1}$ 
and the independent Bernoulli random variable $B_n$,
so we are done by induction on $n$.
\item
The generating function for $X_n$ is the product of 
the generating functions for $B_1,\ldots,B_n$;
this gives the first identity.
The second follows from the first.
\item
This follows from \eqref{I:generating function} 
and Cauchy's $q$-binomial theorem
(which actually goes back to \cite{Rothe1811}
and is related to earlier formulas of Euler).
Namely, set $t=1/p$ in formula~(18) of \cite{Cauchy1843},
and divide by $\prod_{j=0}^{n-1} (1+p^{-j})$.
\item
Take the limit of~\eqref{I:adn} as $n \to \infty$.
The alternative formula for $c$ follows from substituting 
\[
	1+p^{-j} = \frac{1-p^{-2j}}{1-p^{-j}}
\]
for $j \ne 0$ and cancelling common factors.\qedhere
\end{enumerate}
\end{proof}

\begin{remark}
There is a variant for finite-dimensional vector spaces $V$ 
over a finite field $F$ of non-prime order.
One can define the notion of 
weakly metabolic quadratic form $Q \colon V \to F$,
and then prove Proposition~\ref{P:counting} with $q$ in place of $p$.

If we consider only even-dimensional
nondegenerate quadratic spaces over $F$,
then the obstruction analogous to that in Remark~\ref{R:Witt group}
takes values in a group of order $2$.
The obstruction is the discriminant in $F^\times/F^{\times 2}$
if $\Char F \ne 2$,
and the Arf invariant (see \cite{Scharlau1985}*{9.\S4})
if $\Char F = 2$.
\end{remark}

\begin{remark}
\label{R:4-torsion}
By Lemma~\ref{L:killed by l or 2l},
a quadratic form on a $2$-torsion module will in general 
take values in the {\em 4-torsion} of the image group.
Thus we need an analogue of 
Proposition~\ref{P:counting}\eqref{I:probability of given intersection dim}
for a $\frac14 \Z/\Z$-valued quadratic form $Q$ on a $2n$-dimensional 
$\F_2$-vector space $V$ such that $Q(V) \not\subset \frac12 \Z/\Z$,
or equivalently such that $\langle x,x \rangle = 2 Q(x)$ 
is not identically $0$.

The map $x \mapsto \langle x,x\rangle$ 
is a linear functional $V \to \frac12 \Z/\Z \isom \F_2$
since
\[
	\langle x+y,x+y \rangle 
	= \langle x,x \rangle + 2 \langle x,y \rangle + \langle y,y \rangle
	= \langle x,x \rangle + \langle y,y \rangle.
\]
Hence there exists a nonzero $c \in V$ such that 
$\langle x,x \rangle = \langle x,c \rangle$ for all $x \in V$.
This equation shows that
for any maximal isotropic subspace $W$ of $V$,
we have $c \in W^\perp = W$.
The map $W \mapsto W/\F_2 c$
defines a bijection between
the set of maximal isotropic subspaces of $V$ 
and the set of maximal isotropic subspaces of $(\F_2 c)^\perp/\F_2 c$,
which is a $\frac12 \Z/\Z$-valued quadratic space.
So the random variable $\dim(Z \intersect W)$ for $V$
is $1$ plus the corresponding random variable 
for the $\frac12 \Z/\Z$-valued quadratic space of dimension $\dim V - 2$.
\end{remark}

\begin{definition}
\label{D:X_Sel}
Given a prime $p$,
let $X_{\Sel_p}$ be a $\Z_{\ge 0}$-valued random variable
such that for any $d \in \Z_{\ge 0}$,
the probability $\Prob(X_{\Sel_p}=d)$ equals the $a_d$ in 
Proposition~\ref{P:counting}\eqref{I:generating function}.
\end{definition}

In the notation of 
Proposition~\ref{P:counting}\eqref{I:probability of given intersection dim}, 
we can also write
\[
	X_{\Sel_p} = \lim_{n \to \infty} X_n = \sum_{n=1}^\infty B_n.
\]

\begin{remark}
The distribution of $X_{\Sel_2}$ 
agrees with the distribution of $s(E)$
mentioned at the beginning of Section~\ref{S:introduction Selmer groups}.
\end{remark}

\subsection{Some topology}
\label{S:topology}

To interpret $a_r$ as a probability and not only a limit of probabilities,
we are led to consider infinite-dimensional quadratic spaces.
The na\"{\i}ve dual of such a space $V$ is too large to be isomorphic to $V$,
so we consider spaces with a locally compact topology
and use the Pontryagin dual.
In order to define a probability measure on 
the set of maximal isotropic subspaces,
we need additional countability constraints.
This section proves the equivalence of several such countability constraints.

For a locally compact abelian group $G$,
define the \defi{Pontryagin dual} $G^* := \Hom_{\conts}(G,\R/\Z)$.
Recall that a topological space is \defi{$\sigma$-compact}
if it is expressible as a union of countably many compact subspaces,
\defi{first-countable} if each point has a countable basis of neighborhoods,
\defi{second-countable} if the topology admits a countable basis,
and \defi{separable} if it has a countable dense subset.

\begin{proposition}
\label{P:sigma-compact}
Let $G$ be a locally compact abelian group.
The following are equivalent:
\begin{enumerate}[\upshape (a)]
\item $G^*$ is $\sigma$-compact.
\item $G$ is first-countable.
\item $G$ is metrizable.
\end{enumerate}
Moreover, $G$ is second-countable 
if and only if $G$ and $G^*$ are both $\sigma$-compact.
\end{proposition}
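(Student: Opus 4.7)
The equivalence (b) $\Leftrightarrow$ (c) follows from the Birkhoff--Kakutani theorem, which states that a Hausdorff topological group is metrizable if and only if it is first-countable; the locally compact abelian group $G$ is Hausdorff, so this applies immediately. For (a) $\Leftrightarrow$ (b), I plan to exploit Pontryagin reflexivity $G \isom G^{**}$: a neighborhood basis of $0$ in $G^{**}$ consists of the sets $\{\phi \in G^{**} : \phi(K) \subseteq U\}$ as $K$ ranges over compact subsets of $G^*$ and $U$ over neighborhoods of $0$ in $\R/\Z$. Fix once and for all a countable decreasing basis $\{U_m\}_{m \ge 1}$ of such neighborhoods in $\R/\Z$ (for instance the images of $(-1/m, 1/m) \subseteq \R$).

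For (a) $\Rightarrow$ (b): writing $G^* = \bigcup_n L_n$ with $L_n$ compact, local compactness of $G^*$ lets me enlarge each $L_n$ so that $L_n \subseteq \text{int}(L_{n+1})$; then every compact $K \subseteq G^*$ is covered by the open sets $\text{int}(L_n)$, and so $K \subseteq L_n$ for some $n$. The countable family $\{\phi \in G^{**} : \phi(L_n) \subseteq U_m\}_{n, m \ge 1}$ is therefore cofinal in the compact-open neighborhood basis, yielding a countable neighborhood basis of $0$ in $G$. For (b) $\Rightarrow$ (a), I choose a countable neighborhood basis $\{V_k\}$ of $0$ in $G$ with each $V_k$ compact (possible by local compactness) and define the polar $V_k^\circ := \{\chi \in G^* : \chi(V_k) \subseteq [-1/4, 1/4]\}$, which is compact in $G^*$ by Ascoli--Arzel\`{a}. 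Given any $\chi \in G^*$, the set $\chi^{-1}([-1/4, 1/4])$ is a neighborhood of $0$ in $G$, hence contains some $V_k$, so $\chi \in V_k^\circ$; thus $G^* = \bigcup_k V_k^\circ$ is $\sigma$-compact.

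For the ``moreover'' statement, I would use that a locally compact Hausdorff topological group is second-countable if and only if it is both first-countable and $\sigma$-compact: second-countable clearly implies both; conversely, first-countability gives metrizability via Birkhoff--Kakutani, and a metrizable $\sigma$-compact space is separable (take a countable dense subset in each piece of a compact exhaustion) and hence second-countable. Combining with (a) $\Leftrightarrow$ (b) gives $G$ second-countable iff $G$ and $G^*$ are both $\sigma$-compact. The main obstacle in the proof is the compactness of the polar $V_k^\circ$ in $G^*$; this is the foundational lemma of Pontryagin duality, proved by embedding $V_k^\circ \hookrightarrow (\R/\Z)^{V_k}$, using Tychonoff to see the ambient product is compact, verifying the image is closed, and then using equicontinuity of $V_k^\circ$ (inherited from the compactness of $V_k$) to identify the subspace topology with the compact-open topology on $V_k^\circ$. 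Rather than reproving it, I would invoke the standard reference.
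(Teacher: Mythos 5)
Your proof is correct, but it follows a genuinely different route from the paper's. The paper first peels off a factor $\R^n$ via the Pontryagin--van Kampen structure theorem to reduce to a group with a compact open subgroup $K$, observes that each of (a), (b), (c) holds for $G$ if and only if it holds for $K$, and then quotes Kakutani's theorem that for compact groups these conditions are all equivalent to second-countability; the ``moreover'' statement is handled by splitting second-countability of $G$ into second-countability of $K$ plus countability of $G/K$. You instead work directly with duality: Birkhoff--Kakutani for (b) $\Leftrightarrow$ (c), reflexivity $G \isom G^{**}$ plus a compact exhaustion of $G^*$ for (a) $\Rightarrow$ (b), compactness of polars of neighborhoods (Ascoli) for (b) $\Rightarrow$ (a), and the general fact that a locally compact group is second-countable iff it is first-countable and $\sigma$-compact for the last claim. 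What the paper's route buys is brevity given the cited structure and Kakutani theorems; what yours buys is that it avoids the structure theorem and treats $G$ and $G^*$ symmetrically, at the cost of invoking the full Pontryagin duality theorem $G \isom G^{**}$ and the foundational polar-compactness lemma (both standard, so this is a legitimate trade). Two small points: equicontinuity of the polar $V_k^\circ$ comes from $V_k$ being a \emph{neighborhood} of $0$, not from its compactness (indeed compactness of the $V_k$ is never needed in that step); and in (a) $\Rightarrow$ (b) you should note explicitly that the sets $\{\phi : \phi(K) \subseteq U_m\}$ with $U_m$ ranging over your countable basis at $0 \in \R/\Z$ already form a neighborhood basis of $0$ in the compact-open topology, so that cofinality of the pairs $(L_n, U_m)$ suffices--both are routine and do not affect correctness.
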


\begin{proof}
After peeling off a direct factor $\R^n$ from $G$,
we may assume that $G$ contains a compact open subgroup $K$, by the 
Pontryagin--van Kampen structure theorem \cite{VanKampen1935}*{Theorem~2}.
Each of (a), (b), (c) holds for $G$ if and only if it holds for $K$,
and for $K$ the three conditions are equivalent to second-countability
by \cite{Kakutani1943}*{Theorem~2 and the bottom of page~366}.
To prove the final statement, observe that
$G$ is second-countable if and only if $K$ is second-countable
and $G/K$ is countable.
By the above, $K$ is second-countable if and only $G^*$
is $\sigma$-compact;
on the other hand, $G/K$ is countable if and only if $G$ is $\sigma$-compact.
\end{proof}

\begin{corollary}
\label{C:second-countable}
Let $G$ be a locally compact abelian group such that $G \isom G^*$.
Then the following are equivalent:
\begin{enumerate}[\upshape (a)]
\item $G$ is $\sigma$-compact.
\item $G$ is first-countable.
\item $G$ is metrizable.
\item $G$ is second-countable.
\item $G$ is separable.
\end{enumerate}
\end{corollary}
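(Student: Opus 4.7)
The plan is to feed the hypothesis $G \isom G^*$ into Proposition~\ref{P:sigma-compact} to collapse (a)--(d), and then attach (e) by two short arguments. The proposition applied to $G$ itself yields (a) $\Leftrightarrow$ (b) $\Leftrightarrow$ (c) for free; its final sentence identifies (d) with the conjunction of $\sigma$-compactness of $G$ and of $G^*$, which under $G \isom G^*$ is just (a). So (a) $\Leftrightarrow$ (b) $\Leftrightarrow$ (c) $\Leftrightarrow$ (d) without further work.

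To complete the cycle I would prove (d) $\Rightarrow$ (e) $\Rightarrow$ (a). The forward implication is a standard topological fact: a space with a countable basis is separable (pick one point from each nonempty basic open set). For (e) $\Rightarrow$ (a), I would mimic the reduction used in the proof of Proposition~\ref{P:sigma-compact}: by Pontryagin--van Kampen, after peeling off a direct factor $\R^n$, we may assume $G$ contains a compact open subgroup $K$. The quotient map $G \to G/K$ is a continuous surjection, so $G/K$ inherits separability; being discrete, it must then be countable. Hence $G = \bigsqcup_{gK \in G/K} gK$ is a countable union of compacts, so $G$ is $\sigma$-compact.

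The only delicate link is (e) $\Rightarrow$ (a): in general topology separability is strictly weaker than $\sigma$-compactness, but the compact-open-subgroup structure (after the $\R^n$ reduction) reduces matters to the fact that a separable discrete space is countable, which is exactly what makes the implication go through.
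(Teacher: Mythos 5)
Your proof is correct and follows essentially the same route as the paper's: Proposition~\ref{P:sigma-compact} (with $G \isom G^*$) collapses (a)--(d), picking a point from each basic open set gives (d)$\implies$(e), and the reduction to a compact open subgroup $K$ plus countability of the separable discrete quotient $G/K$ gives (e)$\implies$(a). Your observation that $G/K$ is \emph{countable} is in fact the precise conclusion needed (the paper's text says ``finite'' at this step, a harmless slip), so nothing further is required.
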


\begin{proof}
Proposition~\ref{P:sigma-compact} formally implies the 
equivalence of (a), (b), (c), (d).
To obtain (d)$\implies$(e), choose one point from each nonempty 
set in a countable basis.
To prove (e)$\implies$(a), reduce to the case
that $G$ contains a compact open subgroup $K$;
then separability implies that $G/K$ is finite, so $G$ is $\sigma$-compact.
\end{proof}

\subsection{Quadratic forms on locally compact abelian groups}
\label{S:locally compact}

\begin{definition}
A \defi{locally compact quadratic module} $(V,Q)$
is a locally compact abelian group $V$
equipped with a continuous quadratic form $Q \colon V \to \R/\Z$.
(This notion was introduced in~\cite{Weil1964}*{p.~145},
where $Q$ was called a ``caract\`ere du second degr\'e'';
for him, the codomain of $Q$ was the group of complex numbers of absolute
value~$1$, because he was interested in the Fourier transforms of such $Q$.)
\end{definition}

The definitions of \defi{maximal isotropic} and \defi{nondegenerate} extend
to this setting.

\begin{definition}
\label{D:weakly metabolic}
Call a nondegenerate locally compact quadratic module $(V,Q)$ 
\defi{weakly metabolic} 
if it contains a {\em compact open} maximal isotropic subgroup $W$;
we then say also that $(V,Q,W)$ is weakly metabolic.
\end{definition}

\begin{remark}
\label{R:two notions of weakly metabolic}
In Definition~\ref{D:weakly metabolic},
it would perhaps be more natural to require the subgroup $W$ to be only closed,
not necessarily compact and open.
Here we explain that the two definitions are equivalent
when $V$ contains a compact open subgroup,
which is not a strong hypothesis,
since by the Pontryagin--van Kampen structure theorem,
if $V$ is a locally compact abelian group,
then $V \isom \R^n \directsum V'$ as topological groups,
where $V'$ contains a compact open subgroup.

If $(V,Q)$ is a nondegenerate locally compact quadratic module
containing a compact open subgroup $K$,
then $X:=K \intersect K^\perp$ is an compact open subgroup that
is isotropic for the pairing.
Then $Q$ restricts to a continuous {\em linear} map $X \to \frac12 \Z/\Z$,
and its kernel $Y$ is a compact open subgroup that is isotropic for $Q$.
Next, if $W$ is {\em any} closed maximal isotropic subgroup of $(V,Q)$,
then $(W \intersect Y^\perp) +Y$ is a compact open maximal isotropic
subgroup of $(V,Q)$ 
(cf.~Remark~\ref{R:subquotient}\eqref{I:subquotient pushing subgroups}).
\end{remark}

\begin{remark}
\label{R:Witt group element for locally compact}
If $(V,Q)$ is a nondegenerate locally compact quadratic module
containing a compact open isotropic subgroup $X$, 
then the obstruction to $(V,Q)$ 
containing a maximal isotropic closed subgroup
is the same as that for $X^\perp/X$,
so the obstruction is measured by an element of $\WQ$
that is independent of $X$ (cf.~Remark~\ref{R:Witt group}).
\end{remark}

\begin{example}
\label{E:hyperbolic locally compact quadratic module}
If $W$ is a locally compact abelian group,
then $V:=W \times W^*$ may be equipped with 
the quadratic form $Q((w,f)):=f(w)$.
If $W$ contains a compact open subgroup $Y$,
then its annihilator in $W^*$ is a compact open subgroup $Y'$ of $W^*$,
and $X:=Y \times Y'$ is a compact open subgroup of $V$,
so Remark~\ref{R:two notions of weakly metabolic}
shows that $(V,Q)$ is weakly metabolic.
\end{example}

\begin{example}[cf.~\cite{Braconnier1948}*{Th\'eor\`eme~1}]
\label{E:restricted direct product}
Suppose that $(V_i,Q_i,W_i)$ for $i \in I$ are weakly metabolic.
Define the \defi{restricted direct product}
\[
	V:=\sideset{}{'}\prod_{i \in I} (V_i,W_i)
	:= \left\{ (v_i)_{i \in I} \in \prod_{i \in I} V_i : 
		v_i \in W_i \textup{ for all but finitely many $i$} \right\}.
\]
Let $W:=\prod_{i \in I} W_i$.
As usual, equip $V$ with the topology 
for which $W$ is open and has the product topology.
For $v:=(v_i) \in V$, define $Q(v) = \sum_{i \in I} Q_i(v_i)$,
which makes sense since $Q_i(v_i)=0$ for all but finitely many $i$.
Then $(V,Q,W)$ is another weakly metabolic locally compact quadratic module.

Moreover,
if $I$ is countable and each $V_i$ is second-countable,
then $V$ is second-countable too.
({\em Proof:} Use Corollary~\ref{C:second-countable} 
to replace second-countable by $\sigma$-compact.
If each $V_i$ is $\sigma$-compact, then each $V_i/W_i$ is countable, 
so $V/W \isom \Directsum_{i \in I} V_i/W_i$ is countable,
so $V$ is $\sigma$-compact.)
\end{example}

Let $(V,Q)$ be a locally compact quadratic module.
Let $\calI_V$ be the set of maximal isotropic closed subgroups of $(V,Q)$.
Let $\calX_V$ be the poset of compact open isotropic subgroups of $(V,Q)$,
ordered by (reverse) inclusion.

\begin{theorem}
\label{T:probability measure for LCA group}
Let $(V,Q,W)$ be a 
second-countable weakly metabolic locally compact quadratic module.
\begin{enumerate}[\upshape (a)]
\item
The set $\calX_V$ is a countable directed poset.
\item\label{I:inverse system}
The finite sets $\calI_{X^\perp/X}$ for $X \in \calX_V$ 
with the maps $\pi^{X_1^\perp/X_1,X_2^\perp/X_2}$ for $X_1 \subseteq X_2$ 
(cf.~Remark~\ref{R:subquotient}\eqref{I:subquotient pushing subgroups})
form an inverse system.
\item\label{I:inverse limit}
If $\Intersection_{X \in \calX_V} X = 0$,
then the collection of maps $\pi^{V,X^\perp/X}$ induce a bijection 
\[
	\calI_V \to \varprojlim_{X \in \calX_V} \calI_{X^\perp/X}.
\]
Equip $\calI_V$ with the inverse limit topology.
\item\label{I:measure}
In the remaining parts of this theorem, assume
that $p$ is a prime such that $pV=0$.
Then there exists a unique probability measure $\mu$ on
the Borel $\sigma$-algebra of $\calI_V$ 
such that for every compact open isotropic subgroup $X$ of $(V,Q)$, 
the push-forward $\pi^{V,X^\perp/X}_* \mu$ 
is the uniform probability measure
on the finite set $\calI_{X^\perp/X}$.
\item\label{I:invariant measure}
The measure $\mu$ is invariant under the orthogonal group $\Aut(V,Q)$.
\item
\label{I:intersection distribution in locally compact}
If $Z$ is distributed according to $\mu$,
then $\Prob(Z \textup{ is discrete}) = 1$
and $\Prob(Z \intersect W \textup{ is finite}) = 1$.
If moreover $\dim_{\F_p} V$ is infinite,
then the distribution of $\dim(Z \intersect W)$ is given by $X_{\Sel_p}$ 
(see Definition~\ref{D:X_Sel}).
\end{enumerate}
\end{theorem}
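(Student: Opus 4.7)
The plan is to dispatch (a)--(c) by topology and algebra, build $\mu$ in (d) via Kolmogorov consistency, deduce (e) by uniqueness, and carry out the main distributional computation in (f). For (a), second-countability of $V$ (Corollary~\ref{C:second-countable}) implies $W$ is compact metrizable and hence profinite with countably many open subgroups, and any $X \in \calX_V$ is determined by $X \cap W$ (a finite-index open subgroup of $W$) together with the image of $X$ in the countable discrete group $V/W$, giving countability of $\calX_V$; directedness under reverse inclusion holds because $X_1 \cap X_2 \in \calX_V$. Part (b) is the functoriality identity $\pi^{X_2^\perp/X_2, X_3^\perp/X_3} \circ \pi^{X_1^\perp/X_1, X_2^\perp/X_2} = \pi^{X_1^\perp/X_1, X_3^\perp/X_3}$ for nested $X_1 \subseteq X_2 \subseteq X_3$, a direct check from Remark~\ref{R:subquotient}\eqref{I:subquotient pushing subgroups}.

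For (c), let $\Phi(Z) := (\pi^{V, X^\perp/X}(Z))_{X}$. For injectivity, $\Phi(Z_1) = \Phi(Z_2)$ forces $(Z_1 \cap X^\perp) + X = (Z_2 \cap X^\perp) + X$ for every $X$; given $z \in Z_1$, continuity of the pairing makes $z^\perp$ an open neighborhood of $0$, so $z \in X^\perp$ for all sufficiently small $X$, whence $z \in Z_2 + X$, and $\bigcap X = 0$ with $Z_2$ closed give $z \in Z_2$. For surjectivity, given a compatible family $(\overline Z_X)$ define $Z := \{v \in V : v + X \in \overline Z_X \text{ for every sufficiently small } X \in \calX_V\}$; verify $Z$ is closed isotropic with $\pi^{V,X^\perp/X}(Z) = \overline Z_X$, and note that any $v \in V \setminus Z$ fails $\bar v \in \overline Z_X$ for some small $X$, so $\pi_X(Z + \F_p v)$ would strictly exceed the maximal isotropic $\overline Z_X$, contradiction, whence $Z$ is maximal.

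For (d)--(e), since $pV = 0$ each $X^\perp/X$ is a finite weakly metabolic $\F_p$-quadratic space (Remarks~\ref{R:Witt group element for locally compact} and~\ref{R:subquotient}\eqref{I:subquotient is quadratic module}), and Proposition~\ref{P:counting}\eqref{I:equal size fibers} applied to $X_2/X_1 \subseteq X_1^\perp/X_1$ shows the transition $\pi^{X_1^\perp/X_1, X_2^\perp/X_2}$ has constant fibre size, so the uniform measures on the finite sets $\calI_{X^\perp/X}$ form a projective system; Kolmogorov consistency on the compact Polish inverse limit $\calI_V$ yields the unique Borel probability $\mu$ projecting to each uniform measure, with uniqueness coming from cylinder sets generating the Borel $\sigma$-algebra. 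For (e), $\phi \in \Aut(V,Q)$ permutes $\calX_V$ and induces bijections $\calI_{X^\perp/X} \isomto \calI_{\phi(X)^\perp/\phi(X)}$ carrying uniform to uniform, so $\phi_*\mu$ satisfies the same characterizing property and must equal $\mu$.

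For (f), fix a decreasing cofinal sequence $X_1 \supseteq X_2 \supseteq \cdots$ in $\calX_V$ with $X_n \subseteq W$ and $\bigcap_n X_n = 0$, forming a neighborhood basis of $0$ in $V$. Writing $\overline W_n := W/X_n$ and $\overline Z_n := \pi^{V, X_n^\perp/X_n}(Z)$, direct computation using $X_n \subseteq W \subseteq X_n^\perp$ gives the key identity
\[
 \overline Z_n \cap \overline W_n \;=\; \bigl((Z \cap W) + X_n\bigr)/X_n \;\cong\; (Z \cap W)/(Z \cap X_n),
\]
so $A_n := \dim(\overline Z_n \cap \overline W_n)$ is finite and pointwise non-decreasing. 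By Proposition~\ref{P:counting}\eqref{I:probability of given intersection dim}, $A_n$ has distribution $X_{m_n}$ with $m_n := \dim \overline W_n \to \infty$, and by \eqref{I:generating function} this converges in distribution to $X_{\Sel_p}$. Combining a.s. monotonicity with distributional convergence, the a.s. limit $L := \sup_n A_n$ has distribution $X_{\Sel_p}$, hence $L < \infty$ a.s.; so a.s. $A_n = L$ for all $n \ge N$ (random), and then the natural surjection $(Z \cap W)/(Z \cap X_{n+1}) \twoheadrightarrow (Z \cap W)/(Z \cap X_n)$ between equidimensional finite $\F_p$-spaces is bijective, forcing $Z \cap X_{n+1} = Z \cap X_n$ for $n \ge N$. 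Their common value equals $\bigcap_{n \ge N}(Z \cap X_n) = Z \cap \bigcap_n X_n = 0$, so $Z \cap X_N = 0$, yielding discreteness of $Z$, finiteness of $Z \cap W = (Z \cap W)/(Z \cap X_N)$, and $\dim(Z \cap W) = L \sim X_{\Sel_p}$. The main obstacle is precisely this step: combining a.s. monotonicity of $A_n$ with distributional convergence to extract an a.s. finite limit, and then invoking the equidimensional-surjection-is-bijection trick to collapse $Z \cap X_n$ to $0$; the surjectivity half of (c) is also delicate, since one must argue that the pointwise-defined $Z$ is not just closed isotropic but genuinely maximal.
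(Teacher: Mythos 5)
Most of your outline runs parallel to the paper's proof: (b), (d), (e) are argued the same way (constant fibre size from Proposition~\ref{P:counting}\eqref{I:equal size fibers}, inverse-limit/consistency construction of $\mu$, uniqueness from cylinder sets, invariance by functoriality), and your (f) is correct and is a legitimate variant of the paper's argument. Your key identity $\overline{Z}_n \cap \overline{W}_n = ((Z\cap W)+X_n)/X_n$ is exactly what makes the sequence $A_n$ nondecreasing with pointwise limit $\dim(Z\cap W)$; where the paper then invokes Borel--Cantelli on the Bernoulli increments, you instead use monotone convergence of the distribution functions of $A_n$ together with the stabilization trick $Z\cap X_{n+1}=Z\cap X_n$ for large $n$, which also yields $Z\cap X_N=0$ and hence discreteness more explicitly than the paper's terse reduction. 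That part is fine.

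Two steps, however, are genuinely flawed as written. In (a), the claim that $X\in\calX_V$ is \emph{determined} by $X\cap W$ and its image in $V/W$ is false: already in a finite hyperbolic space $W\times W^*$ with $\dim_{\F_p}W=2$, the graphs of the $p$ distinct alternating maps $W^*\to W$ are distinct isotropic subgroups, all with intersection $0$ with $W$ and full image in $V/W$. (Also, $W$ need not be profinite, though that is immaterial.) The countability survives because the fibre of $X\mapsto X\cap W$ over a given $Y$ is finite: the isotropic $X\supseteq Y$ inject into the isotropic subgroups of the finite group $Y^\perp/Y$ --- this is how the paper argues. More seriously, in the surjectivity half of (c) your maximality check proves the wrong statement. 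Membership in $\calI_V$ means $Z^\perp=Z$ and $Q|_Z=0$, not merely that $Z$ admits no isotropic enlargement by one vector; in a general quadratic module an inclusion-maximal isotropic subgroup need not be self-perpendicular (the quotient $Z^\perp/Z$ could a priori be anisotropic), and your argument also adjoins $\F_p v$, i.e.\ presumes $pV=0$, which is not a hypothesis of (c). The repair uses the same finite-level data you already have (after verifying $\pi^{V,X^\perp/X}(Z)=\overline{Z}_X$, which itself needs the surjectivity of the finite transition maps, an inverse-limit-of-nonempty-finite-sets argument): if $z\in Z^\perp$, then for every sufficiently small $X$ (so that $z\in X^\perp$) the class $\bar z$ is orthogonal to $\pi^{V,X^\perp/X}(Z)=\overline{Z}_X$, and $\overline{Z}_X^{\perp}=\overline{Z}_X$ in the nondegenerate finite quotient forces $\bar z\in\overline{Z}_X$; by your pointwise definition of $Z$ this gives $z\in Z$, so $Z^\perp\subseteq Z$, while isotropy of the $\overline{Z}_X$ gives $Q|_Z=0$ and $Z\subseteq Z^\perp$. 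This is precisely the paper's argument, and without it (or an equivalent) the bijection in (c) is not established.
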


\begin{proof}\hfill
\begin{enumerate}[\upshape (a)]
\item 
The intersection of two compact open isotropic subgroups of $V$
is another one, so $\calX_V$ is a directed poset.
To prove that $\calX_V$ is countable, first consider the bijection
\begin{align}
\label{E:compact opens in W}
	\{\textup{compact open subgroups of $W$}\} 
	&\to \{\textup{finite subgroups of $V/W$}\} \\
\nonumber
	X &\mapsto X^\perp/W.
\end{align}
Since $V/W$ is a countable discrete group,
both sets above are countable.
The map
\begin{align*}
	\calX_V
	&\to \{\textup{compact open subgroups of $W$}\} \\
	X &\mapsto X \intersect W
\end{align*}
has finite fibers, since the $X \in \calX_V$
containing a given compact open subgroup $Y$ of $W$
are in bijection with the isotropic subgroups of the finite group $Y^\perp/Y$.
Thus $\calX_V$ is countable.
\item 
Given $X_1 \subseteq X_2 \subseteq X_3$
the maps $\pi^{X_i^\perp/X_i,X_j^\perp/X_j}$ for $i<j$ 
behave as expected under composition.
\item
The same computation proving \eqref{I:inverse system} shows that the map
is well-defined.
If $X \in \calX_V$, then $X^\perp$ is another compact open subgroup of $V$
since it contains $X$ as a finite-index open subgroup.
The group $(X^\perp)^* \isom V/X$ is a discrete $\F_p$-vector space,
so it equals the direct limit of its finite-dimensional subspaces.
Taking duals shows that $X^\perp$ is the inverse limit of
its finite quotients, i.e., of the groups $X^\perp/Y$ 
where $Y$ ranges over open subgroups of $X^\perp$.
Moreover, every open subgroup of $X^\perp$ contains an open subgroup
of $X$ (just intersect with $X$),
so it suffices to take the latter.

Now the inverse map $(Z_X) \mapsto Z$ is constructed as follows: given 
$(Z_X) \in \varprojlim_{X \in \calX_V} \calI_{X^\perp/X}$,
let
\begin{align*}
	\widetilde{Z}_X &:= \varprojlim_{\substack{Y \in \calX_V \\ Y \subseteq X}} 
			\left( Z_Y \intersect \frac{X^\perp}{Y} \right)
		\subseteq \varprojlim_{\substack{Y \in \calX_V \\ Y \subseteq X}} 
			\frac{X^\perp}{Y}
		= X^\perp \\
	Z &:= \Union_{X \in \calX_V} \widetilde{Z}_X.
\end{align*}
The maps in the inverse system defining $\widetilde{Z}$ are surjections,
so the image of $\widetilde{Z}_X$ in $X^\perp/X$ equals $Z_X$.
If $X, X' \in \calX_V$ and $X' \subseteq X$,
then $\widetilde{Z}_X = \widetilde{Z}_{X'} \intersect X^\perp$,
so $Z \intersect X^\perp = \widetilde{Z}_X$.
Since each $Z_Y$ is isotropic in $Y^\perp/Y$,
the group $\widetilde{Z}_X$ is isotropic,
so $Z$ is isotropic.
If $z \in Z^\perp$, then we have $z \in X^\perp$ for some $X$,
and then for any $Y \subseteq X$,
the element $z \bmod Y \in Y^\perp/Y$
is perpendicular to $\pi^{V,Y^\perp/Y}(Z) = Z_Y$,
but $Z_Y^\perp=Z_Y$, so $z \bmod Y \in Z_Y$,
and also $z \bmod Y \in X^\perp/Y$;
this holds for all $Y \subseteq X$, so $z \in Z$.
Thus $Z^\perp=Z$; i.e., $Z \in \calI_V$.

Now we show that the two constructions are inverse to each other.
If we start with $(Z_X)$, then the $Z$ produced by the inverse map
satisfies $\pi^{V,X^\perp/X}(Z)=Z_X$.
Conversely, if we start with $Z$, and define $Z_X:=\pi^{V,X^\perp/X}(Z)$,
then the inverse map applied to $(Z_X)$ produces $Z'$
such that $Z \intersect X^\perp \subseteq Z'$ for all $X$,
so $Z \subseteq Z'$,
but $Z$ and $Z'$ are both maximal isotropic, so $Z=Z'$.
\item
Since $V/W$ is a discrete $\F_p$-vector space of dimension $\aleph_0$,
we may choose a cofinal increasing sequence 
of finite-dimensional subspaces of $V/W$,
and this corresponds under \eqref{E:compact opens in W} 
to a cofinal decreasing sequence $Y_1,Y_2,\ldots$
of compact open subgroups of $W$ whose intersection is $0$.
Thus \eqref{I:inverse limit} applies.
Each map in the inverse system has fibers of constant size,
by Proposition~\ref{P:counting}\eqref{I:equal size fibers},
so the uniform measures on these finite sets are compatible.
By \cite{BourbakiIntegration1-6}*{III.\S4.5, Proposition~8(iv)},
the inverse limit measure exists.
\item
The construction is functorial 
with respect to isomorphisms $(V,Q) \to (V',Q')$.
\item
Since $\sum_{r=0}^\infty a_r=1$, it suffices to prove the last statement,
that $\Prob(\dim(Z \intersect W)=r) = a_r$.
Let $Y_i$ be as in the proof of~\eqref{I:measure}.
Then $\dim(Z \intersect W)$ 
is the limit of the increasing sequence of nonnegative integers
$\dim(\pi^{V,Y_i^\perp/Y_i}(Z) \intersect \pi^{V,Y_i^\perp/Y_i}(W))$.
By
Proposition\ref{P:counting}\eqref{I:probability of given intersection dim}
and its proof,
the difference of consecutive integers in this sequence
is a sum of independent Bernoulli random variables.
Since $\sum_{j \ge 1} \Prob(B_j=1)$ converges,
the Borel-Cantelli lemma implies that
\[
	\Prob \left(\dim(Z \intersect W) \ne 
	\dim(\pi^{V,Y_i^\perp/Y_i}(Z) \intersect \pi^{V,Y_i^\perp/Y_i}(W)) \right) 
	\to 0
\]
as $i \to \infty$.
In particular, $\Prob\left(\dim(Z \intersect W)=\infty\right)$ is $0$.
On the other hand, $\dim Y_i^\perp/Y_i \to \infty$ as $i \to \infty$, so 
\[
	\Prob\left(\dim(Z \intersect W)=d \right) 
	= \lim_{n \to \infty} a_{d,n} = a_d 
	= \Prob\left(X_{\Sel_p}=d\right). \qedhere
\]
\end{enumerate}
\end{proof}

\begin{remark}
There is only one infinite-dimensional second-countable weakly metabolic
locally compact quadratic $\F_p$-vector space $(V,Q)$,
up to isomorphism.
Inside $V$ we are given a compact open maximal isotropic subspace $W$,
and Theorem~\ref{T:probability measure for LCA group}\eqref{I:intersection distribution in locally compact} implies the existence of a 
discrete maximal isotropic closed subspace $Z$ with $Z \intersect W = 0$.
Since $V$ is infinite and second-countable, $\dim_{\F_p} Z = \aleph_0$,
so the isomorphism type of $Z$ as locally compact abelian group is determined.
The pairing $Z \times W \to \R/\Z$ defined by $(z,w) \mapsto Q(z+w)$
puts $Z$ and $W$ in Pontryagin duality.
Now the summing map $Z \times W \to V$ is an isomorphism
under which $Q$ corresponds to the standard quadratic form
on $Z \times Z^*$ defined in 
Example~\ref{E:hyperbolic locally compact quadratic module}.
\end{remark}

\begin{remark}
Suppose that $V$ is as in 
Theorem~\ref{T:probability measure for LCA group}\eqref{I:intersection distribution in locally compact},
and that $Z \in \calI_V$ is chosen at random.
The probability that $Z$ contains a given nonzero vector $v$ of $V$
then equals $0$, because for any compact open isotropic subgroup $X \le V$
small enough that $v \in X^\perp - X$, if $\dim X^\perp/X=2n$,
then the probability that 
$\pi^{V,X^\perp/X}(Z)$ contains the nonzero element $\pi^{V,X^\perp/X}(v)$
is $1/(p^n+1)$, which tends to $0$ as $X$ shrinks.
Now, if we fix a {\em discrete} $Z_0 \in \calI_V$
and choose $Z \in \calI_V$ at random,
then $\dim(Z \intersect Z_0)=0$ with probability $1$
by the previous sentence applied to each nonzero vector
of the countable set $Z_0$.
\end{remark}

\subsection{Moments}

Given a random variable $X$, let $\Exp(X)$ be its expectation.
So if $m \in \Z_{\ge 0}$, then $\Exp(X^m)$ is its $m^\tH$ moment.

\begin{proposition}
\label{P:moments}
Fix a prime $p$ and fix $m \in \Z_{\ge 0}$.
Let $X_n$ be as in 
Proposition~\ref{P:counting}\eqref{I:probability of given intersection dim},
and let $X_{\Sel_p}$ be as in Definition~\ref{D:X_Sel}.
Then
\begin{enumerate}[\upshape (a)]
\item\label{I:moments}
We have
\begin{align*}
  \Exp\left(\left(p^{X_n}\right)^m\right) 
	&= \prod_{i=1}^m \frac{p^i+1}{1+p^{-(n-i)}} \\
  \Exp\left(\left(p^{X_{\Sel_p}}\right)^m\right) 
	&= \prod_{i=1}^m (p^i+1).
\end{align*}
In particular, $\Exp(p^{X_{\Sel_p}}) = p+1$.
\item 
We have $\Prob(X_n \textup{ is even})=1/2$ for each $n>0$,
and $\Prob(X_{\Sel_p} \textup{ is even})=1/2$.
\item 
If we condition on the event that $X_{\Sel_p}$ has a prescribed parity,
the moments in~\eqref{I:moments} remain the same.
The same holds for the $m^{\textup{th}}$ moment of $p^{X_n}$ if $m<n$.
\end{enumerate}
\end{proposition}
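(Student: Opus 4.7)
The plan is to read everything off the probability generating function
\[
\Exp(z^{X_n}) \;=\; \prod_{i=0}^{n-1}\frac{1+p^{-i}z}{1+p^{-i}},
\]
supplied by Proposition~\ref{P:counting}\eqref{I:generating function}, together with its $n\to\infty$ analogue for $X_{\Sel_p}$. Each of the three parts will follow by substituting a specific value of $z$ into this product and simplifying.

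For (a), substituting $z=p^m$ gives $\Exp(p^{mX_n})=\prod_{i=0}^{n-1}(1+p^{m-i})/(1+p^{-i})$. A routine reindexing (write $j=m-i$ in the numerator and cancel the factors $(1+p^j)$ for $1-n\le j\le 0$ that appear in both numerator and denominator) produces the claimed closed form $\prod_{i=1}^m (p^i+1)/(1+p^{-(n-i)})$. Letting $n\to\infty$ forces the denominator factors to tend to $1$ and leaves $\prod_{i=1}^m(p^i+1)$; the $m=1$ case gives $\Exp(p^{X_{\Sel_p}})=p+1$. This algebraic simplification is the only step that requires care, and it is routine.

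For (b), substituting $z=-1$ makes the $i=0$ factor of the product equal to $(1-1)/(1+1)=0$, so $\Exp((-1)^{X_n})=0$ whenever $n\ge 1$ and similarly $\Exp((-1)^{X_{\Sel_p}})=0$. Since $\Prob(X \text{ even})=\tfrac12(1+\Exp((-1)^X))$, each parity occurs with probability $\tfrac12$.

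For (c), writing the indicator of $\{X\equiv\epsilon\pmod 2\}$ as $\tfrac12(1+(-1)^\epsilon(-1)^X)$ gives
\[
\Exp(p^{mX}\mid X\equiv\epsilon) \;=\; \frac{\Exp(p^{mX})+(-1)^\epsilon\Exp((-p^m)^X)}{1+(-1)^\epsilon\Exp((-1)^X)}.
\]
The denominator equals $1$ by (b), so it suffices to show that the second term in the numerator vanishes. Substituting $z=-p^m$ into the generating function gives $\Exp((-p^m)^{X_n})=\prod_{i=0}^{n-1}(1-p^{m-i})/(1+p^{-i})$, which vanishes precisely when the factor at $i=m$ is present, i.e., when $m\in\{0,\dots,n-1\}$ (equivalently $m<n$). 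The infinite product for $X_{\Sel_p}$ always contains this vanishing factor at $i=m$. Hence in both cases the conditional moment equals the unconditional one. The only real obstacle is the bookkeeping in part (a); once the generating function is in hand, parts (b) and (c) are straightforward.
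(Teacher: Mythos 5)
Your proposal is correct and follows the paper's own proof essentially verbatim: all three parts are obtained by substituting $z=p^m$, $z=-1$, and $z=-p^m$ into the generating function of Proposition~\ref{P:counting}\eqref{I:generating function} and telescoping. (Only a trivial slip in your bookkeeping: the factors common to numerator and denominator are those with $m-n+1\le j\le 0$, not $1-n\le j\le 0$, but your final formula is the correct one.)
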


\begin{proof}\hfill
  \begin{enumerate}[\upshape (a)]
  \item 
Substitute $z=p^m$ in 
Proposition~\ref{P:counting}\eqref{I:generating function}.
The products telescope.
  \item
Substitute $z=-1$ in 
Proposition~\ref{P:counting}\eqref{I:generating function}.
\item 
Substitute $z=-p^m$ in 
Proposition~\ref{P:counting}\eqref{I:generating function}.
  \end{enumerate}
\end{proof}

\section{Shafarevich-Tate groups of finite group schemes}
\label{S:Sha}

\subsection{Definitions}
\label{S:definition of Sha}

For each field $k$, choose an algebraic closure $\kbar$
and a separable closure $k_s \subseteq \kbar$,
and let $G_k:=\Gal(k_s/k)$.

A \defi{local field} is a nondiscrete locally compact topological field;
each such field is a finite extension of one of $\R$, $\Q_p$, or $\F_p((t))$
for some prime $p$.
A \defi{global field} is a finite extension of $\Q$ or $\F_p(t)$
for some prime $p$.
Let $\Omega$ be the set of nontrivial places of $k$.
For $v \in \Omega$, let $k_v$ be the completion of $k$ at $v$,
so $k_v$ is a local field; if $v$ is nonarchimedean, let $\calO_v$
be the valuation ring in $k_v$.

For a sheaf of abelian groups $M$ on the big fppf site of $\Spec k$, 
define
\begin{equation}
\label{E:Sha}
	\Sha^1(k,M) := 
	\ker\left( \HH^1(k,M) \to \prod_{v \in \Omega} \HH^1(k_v,M) \right).
\end{equation}

\begin{remark}
If $M$ is represented by a smooth finite-type group scheme over $k$,
such as the kernel of an isogeny of degree not divisible by $\Char k$,
then we may interpret the cohomology groups as Galois cohomology groups:
$\HH^1(k,M) = \HH^1(G_k,M(k_s))$ and so on.
\end{remark}

\begin{definition}
\label{D:theta characteristic torsor}
Let $X$ be a smooth projective curve over $k$.
Let $A=\Jac X$.
The degree $g-1$ component of the Picard scheme of $X$
contains a closed subscheme $\calT$ parametrizing line sheaves on $X$
whose square is isomorphic to the canonical sheaf of $X$.
This $\calT$ is a torsor under $A[2]$, 
called the \defi{theta characteristic torsor}.
Let $c_\calT \in \HH^1(k,A[2])$ be its class.
\end{definition}

\subsection{Vanishing criteria}

The following criteria for vanishing of $\Sha^1(k,M)$ 
will be especially relevant for
Theorem~\ref{T:Selmer is intersection}\eqref{I:quotient of Sel}.

\begin{proposition}
\label{P:Sha^1=0}
Suppose that $M$ is a finite \'etale group scheme over $k$,
so we identify $M$ with the finite $G_k$-module $M(k_s)$.
\begin{enumerate}[\upshape (a)]
\item\label{I:Sha of Z/nZ}
If $M=\Z/n\Z$, then $\Sha^1(k,M)=0$.
\item\label{I:Sha of direct summand} 
If $M$ is a direct summand of a direct sum of 
permutation $\Z/n\Z$-modules arising from finite separable extensions of $k$,
then $\Sha^1(k,M)=0$.
\item\label{I:Sha is in intersection}
Let $G$ be the image of $G_k$ in $\Aut M(k_s)$.  
Identify $\HH^1(G,M)$ with its image under the injection
$\HH^1(G,M) \injects \HH^1(k,M)$.
Then
\[
	\Sha^1(k,M) \subseteq
	\Intersection_{\textup{cyclic $H \le G$}} 
	\ker\left( \HH^1(G,M)\to \HH^1(H,M) \right).
\]
\item\label{I:Sha of cyclic Sylow} 
If $p$ is a prime such that $pM=0$ 
and the Sylow $p$-subgroups of $\Aut M(k_s)$ are cyclic, then $\Sha^1(k,M)=0$.
\item\label{I:Sha of E[p]}
 If $E$ is an elliptic curve, and $p \ne \Char k$,
then $\Sha^1(k,E[p])=0$.
\item\label{I:Sha of A[2]} 
If $\Char k \ne 2$, and $A$ is the Jacobian of the smooth projective
model of $y^2=f(x)$, where $f \in k[x]$ is separable of odd degree,
then $\Sha^1(k,A[2])=0$.
\end{enumerate}
\end{proposition}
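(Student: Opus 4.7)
The plan is to prove the six parts in order, each reducing to an earlier one.

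For (a), identify $\HH^1(k,\Z/n\Z)$ with $\Hom_{\conts}(G_k,\Z/n\Z)$; a class in $\Sha^1$ corresponds to a continuous homomorphism $\phi$ vanishing on every decomposition subgroup, whose kernel cuts out a cyclic extension $L/k$ in which every place of $k$ splits completely, so Chebotarev density forces $L=k$ and $\phi=0$. For (b), Shapiro's lemma identifies $\HH^1(k,\Z/n\Z[G_k/G_{k'}])$ with $\HH^1(k',\Z/n\Z)$ compatibly with localizations, so (a) gives the vanishing for permutation modules; this passes to direct summands of direct sums.

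For (c), I first show that $\Sha^1(k,M)$ lies in the inflation image $\HH^1(G,M) \subseteq \HH^1(k,M)$. Let $L$ be the fixed field of $\ker(G_k \surjects G)$; then $G_L$ acts trivially on $M$, so $\HH^1(L,M) = \Hom_{\conts}(G_L,M)$, and the restriction of any $\Sha^1(k,M)$-class to $\HH^1(L,M)$ itself lies in $\Sha^1(L,M)$ (since places of $L$ sit over places of $k$); applying (a) to each cyclic factor of the abstract abelian group $M$ makes this restriction vanish, so by inflation--restriction the class descends from $\HH^1(G,M)$. Now for any cyclic $H \le G$, Chebotarev supplies a place $v$ of $k$ unramified in $L$ whose decomposition subgroup in $G$ is a conjugate of $H$, and the inflation--restriction sequence for $L_w/k_v$ (unramified, hence $M^{I_v}=M$) then shows that the class restricts trivially to $\HH^1(H,M)$. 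For (d), since $pM=0$, the group $\HH^1(G,M)$ is $p$-primary, and restriction--corestriction embeds it into $\HH^1(G_p,M)$ for a Sylow $p$-subgroup $G_p$, which is cyclic by hypothesis; (c) then kills any $\Sha^1$ class.

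For (e), the Sylow $p$-subgroups of $\GL_2(\F_p) \supseteq \Aut E[p]$ have order $p$ and are cyclic, so (d) applies directly. For (f), let $X$ be the $G_k$-set of roots of $f$, with $|X|=2g+1$. The standard description of two-torsion on odd-degree hyperelliptic Jacobians yields a short exact sequence of $G_k$-modules
\[
  0 \to \F_2 \xrightarrow{\text{diag}} \F_2[X] \to A[2] \to 0,
\]
and the sum map $\F_2[X] \to \F_2$ retracts the diagonal since $2g+1$ is odd in $\F_2$. Hence $A[2]$ is a $G_k$-direct summand of the permutation module $\F_2[X]$, which decomposes as a direct sum of $\F_2[G_k/H]$ over stabilizers $H$ of the $G_k$-orbits on $X$, so (b) concludes. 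The main obstacle should be the inflation step in (c), which requires promoting the Chebotarev argument from $k$ to the finite extension $L$ and then a careful second use of (a); once that is secured, parts (d)--(f) are short.
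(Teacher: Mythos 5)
Your proposal is correct and follows essentially the same route as the paper: (b), (d), (e), (f) are argued exactly as there ((f) with the direct-summand claim for $A[2]$ usefully spelled out via the sum map on $\F_2[X]$, using that $\deg f$ is odd). The only difference is that where the paper cites references for (a) and (c) (Milne's \emph{Arithmetic Duality Theorems} and Bruin--Poonen--Stoll), you supply the standard Chebotarev arguments yourself -- including the inflation step reducing to $\HH^1(G,M)$ and the realization of each cyclic $H\le G$ as a decomposition group -- and these are sound.
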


\begin{proof}\hfill
  \begin{enumerate}[\upshape (a)]
  \item See \cite{MilneADT2006}*{Example~I.4.11(i)}.
  \item Combine \eqref{I:Sha of Z/nZ} with 
Shapiro's lemma \cite{Atiyah-Wall1967}*{\S4,~Proposition~2}
to obtain
the result for a finite permutation module $(\Z/n\Z)[G_k/G_L]$
for a finite separable extension $L$ of $k$.
The result for direct summands of direct sums of these follows.
\item This is a consequence of the Chebotarev density theorem:
see \cite{Bruin-Poonen-Stoll-preprint}.
\item Let $G$ be the image of $G_k \to \Aut M(k_s)$.
Then any Sylow $p$-subgroup $P$ of $G$ is cyclic.
But the restriction $\HH^1(G,M) \to \HH^1(P,M)$ 
is injective \cite{Atiyah-Wall1967}*{\S6,~Corollary~3},
so \eqref{I:Sha is in intersection} shows that $\Sha^1(k,M)=0$.
\item Any Sylow-$p$-subgroup of $\GL_2(\F_p)$ is conjugate to the group
of upper triangular unipotent matrices, which is cyclic.
Apply~\eqref{I:Sha of cyclic Sylow}.
\item 
The group $A[2]$ is a direct summand of the permutation $\Z/2\Z$-module
on the set of zeros of $f$.
Apply~\eqref{I:Sha of direct summand}.\qedhere
\end{enumerate}
\end{proof}

\subsection{Jacobians of hyperelliptic curves}
\label{S:Jacobians of hyperelliptic curves}

See \cite{Poonen-Rains-selfcup-preprint}*{Example~3.12(b)} 
for a $2$-dimensional Jacobian $A$ with $\Sha^1(\Q,A[2]) \ne 0$.
Such examples are rare:
a special case of Proposition~\ref{P:Sha usually 0} below
shows that asymptotically 100\% of $2$-dimensional Jacobians $A$ over $\Q$
have $\Sha^1(\Q,A[2]) = 0$.

\begin{proposition}\label{P:Sha usually 0}
Fix $g \ge 1$ and a prime $p$.
For random $f(x) \in \Z[x]$ of degree $2g+1$ 
with coefficients in $[-B,B]$,
if $A$ is the Jacobian of the smooth projective model $X$ of $y^2=f(x)$,
the probability that $\Sha^1(\Q,A[p])=0$ tends to $1$ as $B \to \infty$.
The same holds if $2g+1$ is replaced by $2g+2$ 
(the general case for a genus~$g$ hyperelliptic curve).
\end{proposition}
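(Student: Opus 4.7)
The plan is to use Proposition~\ref{P:Sha^1=0} to reduce $\Sha^1(\Q,A[p])=0$ to a statement about the image $G$ of the mod-$p$ Galois representation $\rho\colon G_\Q\to\Aut A[p](\Qbar)$, and then to show that this image is large enough for a density-$1$ set of $f$. The first step is to show, via an effective Hilbert irreducibility theorem (or equivalently via van der Waerden-style counting of integer polynomials with atypical Galois group), that the proportion of $f$ with coefficients in $[-B,B]^{\deg f+1}$ whose splitting field has Galois group the full symmetric group $S_n$ (with $n=\deg f\in\{2g+1,2g+2\}$) tends to $1$ as $B\to\infty$; I then establish $\Sha^1(\Q,A[p])=0$ for such $f$.

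If $p=2$ and $n=2g+1$, Proposition~\ref{P:Sha^1=0}\eqref{I:Sha of A[2]} gives the vanishing unconditionally, with no genericity hypothesis needed. If $p=2$ and $n=2g+2$, the $G_\Q$-module $A[2]$ is the quotient of the sum-zero subspace $V\subseteq\F_2[R]$ (where $R$ is the set of roots of $f$) by the line spanned by the all-ones vector. The long exact cohomology sequence attached to $0\to\F_2\to V\to A[2]\to 0$, combined with Proposition~\ref{P:Sha^1=0}\eqref{I:Sha of Z/nZ} for the kernel and \eqref{I:Sha is in intersection} applied to $V$ (using that Frobenius elements for $S_n$-Galois $f$ exhaust every conjugacy class of $S_n$ by Chebotarev), will then force $\Sha^1(\Q,A[2])=0$. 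For odd $p$, results of Zarhin on big monodromy for hyperelliptic Jacobians imply that $G$ contains $\Sp_{2g}(\F_p)$ whenever the Galois group of $f$ is $S_n$ (possibly excluding finitely many pairs $(g,p)$). The cohomological vanishing $\HH^1(\Sp_{2g}(\F_p),\F_p^{2g})=0$ for $p\ge 3$, combined with the inclusion $\Sha^1(\Q,A[p])\subseteq\HH^1(G,A[p])$ (which follows from inflation-restriction together with Chebotarev: the restriction of a locally trivial class to $G_K$ is a homomorphism vanishing on every Frobenius, hence zero), will then yield $\Sha^1(\Q,A[p])=0$.

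The main obstacle is the $p=2$ even-degree case, where $A[2]$ is a genuine extension and not a direct summand of any permutation module, so density-$1$ vanishing requires the finer Chebotarev/\eqref{I:Sha is in intersection} argument rather than a direct appeal to Proposition~\ref{P:Sha^1=0}\eqref{I:Sha of direct summand}. A secondary issue for odd $p$ is that the big-monodromy input may fail for a few small exceptional $(g,p)$; in those cases one must either construct explicit cyclic subgroups of $G$ detecting every nonzero class in $\HH^1(G,A[p])$, or invoke refined monodromy theorems handling those small parameter values.
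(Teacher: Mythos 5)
There are two genuine gaps, one in each of the non-trivial cases. For odd $p$, your reduction is to the condition $\Gal(f)\isom S_n$, and you then invoke ``Zarhin-type'' results to conclude that the mod-$p$ image contains $\Sp_{2g}(\F_p)$ ``for such $f$.'' No such implication holds for a \emph{fixed} prime $p$: Zarhin's theorems deduce $\End_{\Qbar}(A)=\Z$ from $\Gal(f)\supseteq A_n$, which controls the image only for all but finitely many primes \emph{depending on the curve}, not for a prescribed $p$. Already for $g=1$ there are infinitely many elliptic curves with surjective mod-$2$ representation (i.e.\ $\Gal(f)=S_3$) but small mod-$3$ image, so the step ``$S_n$-generic $\Rightarrow$ big mod-$p$ monodromy'' is false as stated; the exceptional curves have density zero, but that is exactly what remains to be proved. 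The correct move (and the paper's) is to apply Hilbert irreducibility directly to the mod-$p$ representation of the generic hyperelliptic Jacobian, whose image is $\GSp_{2g}(\F_p)$, and then kill $\HH^1(\GSp_{2g}(\F_p),A[p])$ using the central element $-I$ (your appeal to $\HH^1(\Sp_{2g}(\F_p),\F_p^{2g})=0$ is fine and follows from the same center-kills argument, so your worry about exceptional $(g,p)$ is unnecessary; the monodromy input is the real problem). You should also dispose of $g=1$ via Proposition~\ref{P:Sha^1=0}\eqref{I:Sha of E[p]} rather than via monodromy.

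The $p=2$, degree $2g+2$ case cannot be settled by the argument you outline, because the conclusion you aim for is false at the level of generality you work in. With $\Gal(f)=S_{2g+2}$ one only gets $\Sha^1(\Q,A[2])\subseteq \HH^1(S_{2g+2},\calW_0)$, and this group is \emph{nonzero}: it has order $2$, generated by the class $c_{\calW_1}$ of the odd-weight torsor (Pollatsek), and there do exist hyperelliptic Jacobians over $\Q$ with $\Sha^1(\Q,A[2])\neq 0$ (see the reference at the start of Section~\ref{S:Jacobians of hyperelliptic curves}). Worse, when $g$ is even one checks that \emph{every} cyclic subgroup of $S_{2g+2}$ fixes a point of $\calW_1$ (a cycle of odd length gives an invariant odd-sum vector; if all cycles are even, the vector alternating along each cycle has sum $g+1$, odd, and is invariant modulo the diagonal), so the Chebotarev criterion \eqref{I:Sha is in intersection} — and any argument using only unramified Frobenius data, including your proposed long-exact-sequence manipulation with $0\to\F_2\to V\to A[2]\to 0$ — can never exclude $c_{\calW_1}$ from $\Sha^1(\Q,A[2])$. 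What is needed, and what the paper does, is a genuinely local-at-ramified-primes sieve: for each prime $\ell$, the event that $f$ factors over $\Z_\ell$ as an unramified irreducible factor of degree $2g$ times a ramified quadratic has probability of order $1/\ell$, and in that event the (non-cyclic) decomposition group at $\ell$ fixes no point of $\calW_1$, so $c_{\calW_1}$ is locally nontrivial at $\ell$; since $\sum 1/\ell$ diverges and the conditions are asymptotically independent, such an $\ell$ exists for $100\%$ of $f$, forcing $\Sha^1(\Q,A[2])=0$. Without some such ramified-prime argument your proof of this case does not go through.
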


\begin{proof}
By Proposition~\ref{P:Sha^1=0}\eqref{I:Sha of E[p]} we may assume $g \ge 2$.
We may assume that $f$ is separable.

First consider the case $p \ne 2$.
Generically, the image of $G_\Q \to \Aut A[p]$ is as large as possible
given the existence of the Weil pairing $e_p$,
i.e., isomorphic to $\GSp_{2g}(\F_p)$.
By the Hilbert irreducibility theorem, 
the same holds for asymptotically $100\%$ of the polynomials $f$.
By Proposition~\ref{P:Sha^1=0}\eqref{I:Sha is in intersection}, 
$\Sha^1(\Q,A[p])$ is contained in the subgroup
$\HH^1(\GSp_{2g}(\F_p),A[p])$ of $\HH^1(\Q,A[p])$,
and that subgroup is $0$ because the central element $-I \in \GSp_{2g}(\F_p)$
has no fixed vector 
(cf.~\cite{Foulser1964}*{Lemma~14.4} and \cite{Pollatsek1971}*{Theorem~2.3}).

Now suppose that $p=2$.
In the degree $2g+1$ case, we are done by 
Proposition~\ref{P:Sha^1=0}\eqref{I:Sha of A[2]}.
So assume that $\deg f=2g+2$.
Let $\Delta$ be the set of zeros of $f$ in $\Qbar$, so $\#\Delta=2g+2$.
For $m \in \Z/2\Z$, let $\calW_m$ be the quotient of the sum-$m$ part of
the permutation module $\F_2^\Delta\isom \F_2^{2g+2}$ 
by the diagonal addition action of $\F_2$.
Then the $G_\Q$-module $A[2]$ may be identified with $\calW_0$,
and $\calW_m$ is a torsor under $\calW_0$.

Again by the Hilbert irreducibility theorem, we may assume that
the group $\Gal(\Q(A[2])/\Q) \isom \Gal(f)$ is as large as possible, 
i.e., equal to $S_{2g+2}$.
Then
$\Sha^1(\Q,A[2]) \subseteq \HH^1(S_{2g+2},\calW_0) \subset \HH^1(\Q,A[2])$.
The group $\HH^1(S_{2g+2},\calW_0)$ is of order~$2$,
generated by the class $c_{\calW_1}$ 
of $\calW_1$ \cite{Pollatsek1971}*{Theorem~5.2}.
Computations as in \cite{Poonen-Stoll1999}*{\S9.2}
show that for each prime $\ell$, the probability that $f(x)$
factors over $\Z_\ell$ into irreducible polynomials of degree $2g$ and $2$
defining unramified and ramified extensions of $\Q_\ell$, respectively,
is of order $1/\ell$ (not smaller) as $\ell \to \infty$, and in this case 
no point in $\calW_1$ is $G_{\Q_\ell}$-invariant,
so $c_{\calW_1}$ has nonzero image in $\HH^1(\Q_\ell,A[2])$.
Since the conditions at finitely many $\ell$ are asymptotically independent
as $B \to \infty$, and since $\sum 1/\ell$ diverges,
there will exist such a prime $\ell$ for almost all $f$,
and in this case $c_{\calW_1} \notin \Sha^1(\Q,A[2])$,
so $\Sha^1(\Q,A[2])=0$.
\end{proof}

\begin{remark}
\label{R:extension of Sha usually 0}
Proposition~\ref{P:Sha usually 0} can easily be extended 
to an arbitrary global field of characteristic not equal to $2$ or $p$.
\end{remark}

\begin{remark}
\label{R:theta characteristic torsor}
Let $X$ be the smooth projective model of $y^2=f(x)$,
where $f(x) \in k[x]$ is separable of degree $2g+2$.
As torsors under $A[2] \isom \calW_0$, 
we have $\calT \isom \calW_{g-1}$ 
(cf.~\cite{Mumford1971}*{p.~191}).  
\end{remark}

\subsection{Jacobians with generic $2$-torsion}
\label{S:Jacobians with generic 2-torsion}

Suppose that $X$ is a curve of genus $g \ge 2$ 
over a global field $k$ of characteristic not~$2$
such that the image $G$ of $G_k \to \Aut A[2]$ is as large as possible,
i.e., $\Sp_{2g}(\F_2)$.
(This forces $X$ to be non-hyperelliptic if $g \ge 3$.)
By \cite{Pollatsek1971}*{Theorems~4.1~and~4.8},
the group $\HH^1(G,A[2]) \subseteq \HH^1(G_k,A[2])$
is of order $2$, generated by $c_\calT$.
So Proposition~\ref{P:Sha^1=0}\eqref{I:Sha is in intersection}
shows that $\Sha^1(k,A[2])$ is of order $2$ or $1$,
according to whether the nonzero class $c_\calT$
lies in $\Sha^1(k,A[2])$ or not.

\section{Selmer groups as intersections of two maximal isotropic subgroups}
\label{S:Selmer}

\subsection{Quadratic form arising from the Heisenberg group}
\label{S:Heisenberg}

Let $A$ be an abelian variety over a field $k$.
Let $\Adual$ be its dual abelian variety.
Let $\lambda \colon A \to \Adual$ be an isogeny equal to its dual.
The exact sequence
\[
	0 \to A[\lambda] \to A \stackrel{\lambda}\to \Adual \to 0
\]
gives rise to the ``descent sequence''
\begin{equation}
\label{E:descent sequence}
	0 \to 
	\frac{\Adual(k)}{\lambda A(k)} \stackrel{\delta}\to 
	\HH^1(A[\lambda]) \to 
	\HH^1(A)[\lambda] \to 
	0,
\end{equation}
where $\HH^1(A)[\lambda]$ is the kernel of the homomorphism
$\HH^1(\lambda) \colon \HH^1(A) \to \HH^1(\Adual)$.

Since $\lambda$ is self-dual, we obtain an (alternating) Weil pairing
\[
	e_\lambda \colon A[\lambda] \times A[\lambda] \to \G_m
\]
identifying $A[\lambda]$ with its own Cartier dual 
(cf.~\cite{MumfordAV1970}*{p.~143,~Theorem~1}).
Composing the cup product with $\HH^1(e_\lambda)$ gives a symmetric pairing
\[
	\underset{e_\lambda}\cup\colon 
	\HH^1(A[\lambda]) \times \HH^1(A[\lambda]) \to \HH^2(\G_m)
\]
and its values are killed by $\deg \lambda$.
It is well-known (especially when $\lambda$ is separable) 
that the image of the natural map
$\Adual(k)/\lambda A(k) \to \HH^1(A[\lambda])$ 
is isotropic with respect to $\underset{e_\lambda}\cup$.

In the rest of Section~\ref{S:Selmer},
we will assume that we are in one of the following cases:
\begin{enumerate}[\upshape I.]
\item The self-dual isogeny $\lambda$ has odd degree.
\item The self-dual isogeny $\lambda$ is of the form $\phi_\LL$ 
for some symmetric line sheaf $\LL$ on $A$.
(\defi{Symmetric} means $[-1]^*\LL \isom \LL$.  
For the definition of $\phi_\LL$,
see~\cite{MumfordAV1970}*{p.~60 and Corollary~5 on p.~131}.)
\end{enumerate}
In each of these cases, we will construct a natural quadratic form $q$
whose associated bilinear pairing is $\underset{e_\lambda}\cup$.
Moreover, in each case, we will show that the image
of $\Adual(k)/\lambda A(k) \to \HH^1(A[\lambda])$ 
is isotropic with respect to $q$.

\begin{remark}
We will need both cases, since, for example, we want to study $\Sel_p E$
for every prime $p$, including $2$.
\end{remark}

\begin{remark}
For any symmetric line sheaf $\LL$,
the homomorphism $\phi_\LL$ is self-dual \cite{Polishchuk2003}*{p.~116}.
If moreover $\LL$ is ample, then $\phi_\LL$ is an isogeny
(see \cite{MumfordAV1970}*{p.~124 and Corollary~5 on p.~131}).
\end{remark}

\begin{remark}
The obstruction to expressing a self-dual isogeny $\lambda$
as $\phi_\LL$ is given by an element $c_\lambda \in \HH^1(\Adual[2])$.
For example, if $\lambda$ is an odd multiple of the principal polarization
on a Jacobian of a curve with no rational theta characteristic,
then $c_\lambda \ne 0$.
See~\cite{Poonen-Rains-selfcup-preprint}*{\S3} for these facts,
and for many criteria for the vanishing of $c_\lambda$.
\end{remark}

\begin{remark}
\label{R:2mu}
If $\lambda=2\mu$ for some self-dual isogeny $\mu \colon A \to \Adual$,
then $\lambda$ is of the form $\phi_\LL$, 
since $c_\lambda=2c_\mu=0$.
Explicitly, take $\LL:=(1,\mu)^* \Poincare$,
where $\Poincare$ is the Poincar\'e line sheaf on $A \times \Adual$ 
(see \cite{MumfordAV1970}*{\S20, proof of Theorem~2}).
\end{remark}

We now return to the construction of the quadratic form.

\bigskip

{\em Case~I: $\deg \lambda$ is odd.}

Then $q(x):= -\frac12 \left( x \underset{e_\lambda}\cup x \right)$
is a quadratic form
whose associated bilinear pairing is $-\underset{e_\lambda}\cup$.
(The sign here is chosen to make the conclusion of 
Corollary~\ref{C:construction of quadratic form} hold for $q$.)
Since the image of $\Adual(k)/\lambda A(k) \to \HH^1(A[\lambda])$ 
is isotropic with respect to $\underset{e_\lambda}\cup$, 
it is isotropic with respect to $q$ too.

\bigskip

{\em Case~II: there is a symmetric line sheaf $\LL$ on $A$
such that $\lambda=\phi_\LL$.}  (This hypothesis remains in force
until the end of Section~\ref{S:Heisenberg}.)

When $\lambda$ is separable, 
Zarhin~\cite{Zarhin1974}*{\S2} constructed a quadratic form
$q \colon \HH^1(A[\lambda]) \to \HH^2(\G_m)$
whose associated bilinear pairing was $\underset{e_\lambda}\cup$;
for elliptic curves, C.~O'Neil showed that 
the image of $\Adual(k)/\lambda A(k) \to \HH^1(A[\lambda])$ 
is isotropic for $q$
(this is implicit in~\cite{O'Neil2002}*{Proposition~2.3}).
Because we wish to include the inseparable case,
and because we wish to prove isotropy of the quadratic form
for abelian varieties of arbitrary dimension,
we will give a detailed construction and proof in the general case.

The pairs $(x,\phi)$ where $x \in A(k)$
and $\phi$ is an isomorphism from $\LL$ to $\tau_x^* \LL$
form a (usually nonabelian) group under the operation
\[
	(x,\phi)(x',\phi') = (x+x',(\tau_{x'} \phi) \phi').
\]
The same can be done after base extension,
so we get a group functor.
Automorphisms of $\LL$ induce the identity on this group functor,
so it depends only on the class of $\LL$ in $\Pic A$.

\begin{proposition}[Mumford]
\label{P:Heisenberg}
\hfill
\begin{enumerate}[\upshape (a)]
\item 
This functor is representable by a finite-type group scheme $\calH(\LL)$,
called the \defi{Heisenberg group} 
(or \defi{theta group} or \defi{Mumford group}).
\item
It fits in an exact sequence
\begin{equation}
\label{E:Heisenberg extension}
	1 \to \G_m \to \calH(\LL) \to A[\lambda] \to 1,
\end{equation}
where the two maps in the middle are given by 
$t \mapsto (0,\textup{multiplication by $t$})$
and $(x,\phi)$ to $x$.
This exhibits $\calH(\LL)$ as a central extension of finite-type group schemes.
\item
The induced commutator pairing
\[
	A[\lambda] \times A[\lambda] \to \G_m
\]
is the Weil pairing $e_\lambda$.
\end{enumerate}
\end{proposition}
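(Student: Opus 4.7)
The plan is to realize $\calH(\LL)$ explicitly as a $\G_m$-torsor over $A[\lambda]$ via a descent construction, and then read off both the central extension and the commutator pairing directly from that construction; this follows Mumford's approach.

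For part (a), the key local-to-global fact is that for any $k$-scheme $S$ and $x \in A(S)$, an isomorphism $\LL_S \isomto \tau_x^*\LL_S$ exists fppf-locally on $S$ if and only if $x$ factors through $A[\lambda]$: this is the seesaw theorem applied to $\tau_x^*\LL \tensor \LL^{-1}$, combined with $A[\lambda] = \ker \phi_\LL$. When such an isomorphism exists, any two differ by a unique element of $\Aut(\LL_S) = \G_m(S)$, using that $A$ is proper with $\HH^0(A,\calO_A)=k$ geometrically. To exhibit the torsor as a scheme, set $\calM := m^*\LL \tensor p_2^*\LL^{-1}$ on $A[\lambda] \times A$, where $m$ and $p_2$ are addition and second projection; the theorem of the square together with the definition of $A[\lambda]$ shows $\calM$ is trivial on each $A$-fiber of $p_1 \colon A[\lambda] \times A \to A[\lambda]$, so by cohomology and base change $\calM \isom p_1^* \calN$ for a unique line sheaf $\calN$ on $A[\lambda]$. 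Then $\calH(\LL)$ is the complement of the zero section in the total space of $\calN$, which is a finite-type affine scheme over $A[\lambda]$ representing the given functor.

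For (b), the exactness is built into the construction. Centrality holds because $\G_m \subset \calH(\LL)$ is the subgroup of pairs $(0, t \cdot \id_\LL)$, and scalar automorphisms commute with pullback by any translation: for all $(x,\phi)$ one has $(0,t)(x,\phi) = (x, t\phi) = (x,\phi)(0,t)$.

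For (c), the commutator $\tilde x \tilde y \tilde x^{-1} \tilde y^{-1}$ of lifts $\tilde x, \tilde y \in \calH(\LL)$ of $x, y \in A[\lambda]$ lies in $\G_m$ because $A[\lambda]$ is commutative; centrality of the extension forces the resulting map $e\colon A[\lambda] \times A[\lambda] \to \G_m$ to be independent of the choice of lifts and bilinear. Identifying $e$ with $e_\lambda$ then amounts to observing that this is precisely Mumford's definition of the Weil pairing attached to $\phi_\LL$ (cf.~\cite{MumfordAV1970}*{\S23}), or, in the separable case, to comparing both pairings on geometric points. The main obstacle is the descent step in (a): one has to verify \emph{functorially} in $S$ that $\calM$ descends along $p_1$ and that the resulting $\G_m$-torsor really represents the given functor on arbitrary test schemes, not merely on $k$-points. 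This uses the full strength of the theorem of the square together with properness of $A$, and becomes delicate precisely when $\lambda$ is inseparable so that $A[\lambda]$ may be non-reduced.
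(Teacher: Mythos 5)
The paper itself gives no argument here: its ``proof'' is a citation to \cite{MumfordTheta3}*{pp.~44--46}, so what matters is whether your sketch is sound. It has a genuine gap at exactly the step you flag as the main obstacle, namely the descent of $\calM := m^*\LL \tensor p_2^*\LL^{-1}$ along $p_1 \colon A[\lambda] \times A \to A[\lambda]$. Triviality of $\calM$ on the fibers over the closed (or geometric) points of $A[\lambda]$ does \emph{not} imply $\calM \isom p_1^*\calN$ when the base is non-reduced: Grauert's theorem and the seesaw principle require a reduced base, and the implication is false in general. For example, over $T=\Spec k[\epsilon]$ a nontrivial first-order deformation of $\calO_A$ (classified by a nonzero class in $\HH^1(A,\calO_A)$) is trivial on the unique closed fiber yet is not pulled back from $T$; its pushforward is killed by $\epsilon$, so the base-change map on $\HH^0$ is not surjective and ``cohomology and base change'' gives nothing. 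Since the whole point of reproving this in the paper's setting is to allow $\Char k \mid \deg\lambda$, the base $A[\lambda]$ is non-reduced precisely in the case of interest, and the same objection applies to your appeal to the seesaw theorem for $\tau_x^*\LL_S \tensor \LL_S^{-1}$ over an arbitrary (possibly non-reduced) test scheme $S$.

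The repair is functorial rather than cohomological: use the scheme-theoretic definition of $A[\lambda]=\ker\phi_\LL$ via the Picard scheme. The defining property of $\phi_\LL$ is that $m^*\LL\tensor p_1^*\LL^{-1}\tensor p_2^*\LL^{-1}$ is (after rigidification) isomorphic to $(1\times\phi_\LL)^*\Poincare$, and the composite $A[\lambda]\injects A \to \Adual$ is the zero \emph{morphism of schemes}; restricting to $A\times A[\lambda]$ therefore exhibits $\calM$ as a pullback from $A[\lambda]$ directly, with no fiberwise argument. Equivalently, for any $S$ and $x \in A[\lambda](S)$, the sheaf $\tau_x^*\LL_S\tensor\LL_S^{-1}$ is, by the definition of $\ker\phi_\LL$ as a functor, the pullback of a line sheaf on $S$; this yields Zariski-local existence of $\phi$, makes $\calH(\LL)\to A[\lambda]$ a $\G_m$-torsor, and gives representability and part (b) as you say. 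For part (c), note also that the paper's $e_\lambda$ is the pairing of \cite{MumfordAV1970}*{p.~143, Theorem~1} coming from the self-duality of $\lambda$, not \emph{defined} as a commutator, so identifying the commutator pairing with $e_\lambda$ is a genuine verification (done in the cited pages of \cite{MumfordTheta3}); comparing the two pairings on geometric points does not suffice when $\Char k \mid \deg\lambda$, since $A[\lambda]$ then has too few geometric points.
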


\begin{proof}
See \cite{MumfordTheta3}*{pp.~44--46}.
\end{proof}

\begin{corollary}
\label{C:construction of quadratic form}
The connecting homomorphism $q \colon \HH^1(A[\lambda]) \to \HH^2(\G_m)$
induced by \eqref{E:Heisenberg extension} 
is a quadratic form whose associated bilinear pairing
$\HH^1(A[\lambda]) \times \HH^1(A[\lambda]) \to \HH^2(\G_m)$
sends $(x,y)$ to $-x \underset{e_\lambda}\cup y$.
\end{corollary}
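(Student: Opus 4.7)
My plan is to interpret $q$ as the standard connecting $2$-cocycle map for the central, but nonabelian, extension~\eqref{E:Heisenberg extension}. Given a $1$-cocycle $\sigma \mapsto a_\sigma$ representing a class $a \in \HH^1(A[\lambda])$, I would choose a set-theoretic lift $\sigma \mapsto \tilde a_\sigma$ to $\calH(\LL)$ and set
\[
q(a)(\sigma,\tau) \;:=\; \tilde a_\sigma \cdot \sigma(\tilde a_\tau) \cdot \tilde a_{\sigma\tau}^{-1}.
\]
This lies in $\G_m$ since its image in $A[\lambda]$ is $a_\sigma + \sigma a_\tau - a_{\sigma\tau} = 0$; the usual verification shows that it is a $2$-cocycle whose class in $\HH^2(\G_m)$ is independent of the lift and defines a well-behaved map $q \colon \HH^1(A[\lambda]) \to \HH^2(\G_m)$.

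To show that $q$ is a quadratic map and to identify its bilinear form, I would take classes $a, b$ with lifts $\tilde a_\sigma, \tilde b_\sigma$, so that $\tilde a_\sigma \tilde b_\sigma$ lifts $a_\sigma + b_\sigma$. Expanding $q(a+b)(\sigma,\tau)$ and using the identity $xy = [x,y]\cdot yx$ in $\calH(\LL)$ (with $[x,y] := xyx^{-1}y^{-1}$) to transpose $\tilde b_\sigma$ past $\sigma(\tilde a_\tau)$, together with the fact that the central factors $q(a)(\sigma,\tau)$ and $q(b)(\sigma,\tau)$ commute with every element of $\calH(\LL)$, one arrives cleanly at
\[
q(a+b)(\sigma,\tau) \;=\; [\tilde b_\sigma,\, \sigma(\tilde a_\tau)] \cdot q(a)(\sigma,\tau) \cdot q(b)(\sigma,\tau).
\]
By Proposition~\ref{P:Heisenberg}, the commutator $[\tilde b_\sigma, \sigma(\tilde a_\tau)]$ depends only on $b_\sigma$ and $\sigma a_\tau$ in $A[\lambda]$ and equals the Weil pairing $e_\lambda(b_\sigma, \sigma a_\tau)$; this is precisely the cup-product cocycle representative. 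Combined with graded-commutativity in bidegree $(1,1)$ applied to the alternating pairing $e_\lambda$, and the sign conventions for cup product and Weil pairing, this identifies $q(a+b) - q(a) - q(b)$ with $-a \underset{e_\lambda}\cup b$ in $\HH^2(\G_m)$, so $q$ is a quadratic map with the claimed bilinear form.

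Finally, by Remark~\ref{R:quadratic map vs quadratic form}, promoting $q$ from quadratic map to quadratic form reduces to verifying $q(-a) = q(a)$. Here I would use the hypothesis that $\LL$ is symmetric: a suitably normalized isomorphism $\psi \colon \LL \isomto [-1]_A^*\LL$ (one satisfying $\psi \circ [-1]_A^*\psi = \id_\LL$) induces an automorphism $\iota$ of $\calH(\LL)$ that covers $[-1]$ on the quotient $A[\lambda]$ and acts as the identity on the central $\G_m$. Composing $\iota$ with any lift $\tilde a$ of $a$ yields a lift of $-a$, and naturality of the connecting cocycle under the morphism of central extensions $(\iota, \id_{\G_m})$ gives $q(-a) = q(a)$.

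The main obstacle will be this last step. The existence of an automorphism $\iota$ of $\calH(\LL)$ covering $[-1]_{A[\lambda]}$ is immediate from the isomorphism $\LL \isom [-1]^*\LL$, but arranging $\iota$ to act trivially (rather than by inversion) on the central $\G_m$ is the crux of why Case~II genuinely requires $\LL$ to be symmetric; it ties to the existence of a normalized symmetric theta structure. A secondary nuisance is bookkeeping the sign in Step~2, which must be tracked through both Mumford's commutator convention for the Weil pairing and graded-commutativity of cup products for alternating pairings in bidegree $(1,1)$.
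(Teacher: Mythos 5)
Your overall route is the same as the paper's: show $q$ is a quadratic map whose associated pairing is the cup product via $e_\lambda$ (up to sign), use Remark~\ref{R:quadratic map vs quadratic form} to reduce the quadratic-form property to the single identity $q(-v)=q(v)$, and deduce that identity from $[-1]^*\LL \isom \LL$ together with functoriality of the connecting map under a morphism of central extensions covering $[-1]$ on $A[\lambda]$ and the identity on $\G_m$. The paper outsources the first step to Proposition~2.9 of \cite{Poonen-Rains-selfcup-preprint}; your explicit computation $q(a+b)(\sigma,\tau)=[\tilde b_\sigma,\sigma(\tilde a_\tau)]\,q(a)(\sigma,\tau)\,q(b)(\sigma,\tau)$, with the central commutator identified with the Weil pairing via Proposition~\ref{P:Heisenberg}, is the standard proof of that cited statement, so the structure of your argument matches the paper's.

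Two caveats. The one genuine limitation is that your first step is written in terms of Galois cocycles, which compute $\HH^1(A[\lambda])$ only when $A[\lambda]$ is \'etale, i.e.\ when $\Char k \nmid \deg\lambda$; the paper works with fppf cohomology precisely to include the inseparable case (this is why it cites the companion paper's general result instead of arguing with Galois cocycles), so to get the corollary in its intended generality you would have to redo that step with \Cech{} cocycles for fppf covers or otherwise work torsor-theoretically. Second, the point you flag as the crux --- arranging $\iota$ to act trivially rather than by inversion on the central $\G_m$, via a normalized $\psi$ with $\psi \circ [-1]^*\psi = \id_\LL$ --- is not actually an issue, and no normalization is needed: the central $\G_m$ consists of the scalar automorphisms $(0,t)$ of $\LL$, and these are sent to themselves by $[-1]^*$ and by conjugation by any isomorphism $\LL \isom [-1]^*\LL$, so every choice of $\psi$ yields a morphism of extensions that is the identity on $\G_m$; moreover, since scalar automorphisms of $\LL$ act trivially on the functor $\calH(\LL)$ (as noted in Section~\ref{S:Heisenberg}, $\calH(\LL)$ depends only on the class of $\LL$ in $\Pic A$), the resulting identification is independent of $\psi$. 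This is exactly how the paper phrases the last step, as a commutative diagram relating the extensions for $\LL$ and $[-1]^*\LL$; the symmetry hypothesis is used only to have $[-1]^*\LL \isom \LL$ at all, not to normalize anything.
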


\begin{proof}
Applying \cite{Poonen-Rains-selfcup-preprint}*{Proposition~2.9} 
to \eqref{E:Heisenberg extension}
shows that $q$ is a quadratic {\em map}
giving rise to the bilinear pairing claimed.
By Remark~\ref{R:quadratic map vs quadratic form},
it remains to prove the identity $q(-v)=q(v)$.
Functoriality of \eqref{E:Heisenberg extension} 
with respect to the automorphism $[-1]$ of $A$ gives a commutative diagram
\[
\xymatrix{
1 \ar[r] & \G_m \ar[r] \ar@{=}[d] & \calH(\LL) \ar[r] \ar[d] & A[\lambda] \ar[r] \ar[d]^{-1} & 1 \\
1 \ar[r] & \G_m \ar[r] & \calH([-1]^* \LL) \ar[r] & A[\lambda] \ar[r] & 1 \\
}
\]
But $[-1]^* \LL \isom \LL$, so both rows give rise to $q$.
Functoriality of the connecting homomorphism gives $q(-v)=q(v)$
for any $v \in \HH^1(A[\lambda])$.
\end{proof}

\begin{remark}
The proof of the next proposition
involves a sheaf of sets on the big fppf site of $\Spec k$,
but in the case $\Char k \nmid \deg \lambda$,
it is sufficient to think of it as a set with a continuous $G_k$-action.
\end{remark}

\begin{proposition}
\label{P:rational points give isotropic subgroup}
Identify $\Adual(k)/\lambda A(k)$
with its image $W$ under $\delta$ in \eqref{E:descent sequence}.
Then $q|_W=0$.
\end{proposition}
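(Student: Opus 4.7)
The plan is to show $W \subseteq \ker q$. In Case~I the claim is immediate: there $q(x) = -\tfrac{1}{2}(x \underset{e_\lambda}\cup x)$, and $W$ is isotropic for $\underset{e_\lambda}\cup$. I focus on Case~II, where $q$ is the connecting map for the central extension \eqref{E:Heisenberg extension}, so the associated long exact cohomology sequence gives
\[
\HH^1(\calH(\LL)) \longrightarrow \HH^1(A[\lambda]) \stackrel{q}{\longrightarrow} \HH^2(\G_m),
\]
and $\delta(P) \in \ker q$ if and only if the class $\delta(P)$ lifts to $\HH^1(\calH(\LL))$. Equivalently, I need to lift the $A[\lambda]$-torsor $\lambda^{-1}(P)$ (whose class is $\delta(P)$ by construction of $\delta$) to a $\calH(\LL)$-torsor over $k$.

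To construct the lift, let $\calM$ be the line sheaf on $A_k$ corresponding to the $k$-point $P$ of $\Adual=\Pic^0 A$; this sheaf descends to $k$ from the rigidified Poincar\'e sheaf $\Poincare$. Because $\phi_\LL=\lambda$, any $Q\in A$ with $\lambda(Q)=P$ satisfies $\tau_Q^*\LL \isom \LL \otimes \calM$. Define the sheaf $T_P$ on the big fppf site of $\Spec k$ by
\[
T_P(R) := \{(Q,\phi) : Q\in A(R),\; \lambda(Q)=P_R,\; \phi \colon \tau_Q^*\LL_R \isomto (\LL\otimes\calM)_R\},
\]
together with the $\calH(\LL)$-action
\[
(x,\psi)\cdot(Q,\phi) := \bigl(Q+x,\; (\psi^{-1}\otimes\alpha_x)\circ \tau_x^*\phi\bigr),
\]
where $\alpha_x\colon \tau_x^*\calM \isomto \calM$ is the canonical translation isomorphism available because $\calM\in\Pic^0 A$ (coming from the biextension structure on $\Poincare$). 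The forgetful map $T_P \to \lambda^{-1}(P)$ is a $\G_m$-torsor whose base is the $A[\lambda]$-torsor $\lambda^{-1}(P)$; combining these should exhibit $T_P$ as a $\calH(\LL)$-torsor whose image under $\HH^1(\calH(\LL)) \to \HH^1(A[\lambda])$ is precisely $\delta(P)$, giving $q(\delta(P))=0$.

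The main obstacle is verifying that the displayed formula really defines a (left) action of the noncommutative group $\calH(\LL)$. This reduces to two compatibilities: the cocycle identity $\alpha_{x+y} = \alpha_x \circ \tau_x^*\alpha_y$ for the canonical trivializations of a sheaf in $\Pic^0 A$ (i.e., associativity of the Poincar\'e biextension), and the commutator computation in $\calH(\LL)$, which must reproduce $e_\lambda(x,x')$ as in Proposition~\ref{P:Heisenberg}(c) and match the failure of the $\alpha_x$ to commute with $\tau_x^*\psi$. Once these are in place, simple transitivity of the action is immediate from the $\G_m$-torsor and $A[\lambda]$-torsor structures of the two factors, and the identification $[T_P]\mapsto \delta(P)$ under $\HH^1(\calH(\LL)) \to \HH^1(A[\lambda])$ is formal from the projection $T_P\to \lambda^{-1}(P)$.
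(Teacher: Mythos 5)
Your overall strategy is sound and is in fact the same as the paper's: by exactness of $\HH^1(\calH(\LL)) \to \HH^1(A[\lambda]) \stackrel{q}{\to} \HH^2(\G_m)$ it suffices to lift the class $\delta(P)$ to an $\calH(\LL)$-torsor built from pairs $(Q,\phi)$ with $\lambda(Q)=P$ and $\phi$ an isomorphism relating $\tau_Q^*\LL$ to $\LL\tensor\calM$, where $\calM=\Poincare_P$. But there is a genuine gap precisely at the point you defer as ``the main obstacle'': the isomorphism $\alpha_x\colon \tau_x^*\calM \isomto \calM$ that your action formula requires is not canonical. The biextension structure of $\Poincare$ gives only a fiber-twisted isomorphism $\tau_x^*\calM \isom \calM \tensor_k \calM_x$, where $\calM_x$ is the one-dimensional fiber at $x$; the isomorphisms $\tau_x^*\calM\isomto\calM$ therefore form a $\G_m$-torsor for each $x$, with no distinguished element. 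To make your formula define an action of the group scheme $\calH(\LL)$ on a sheaf, you would need to choose $\alpha_x$ functorially for all points $x$ of $A[\lambda]$ over all base rings, compatibly with the cocycle identity $\alpha_{x+y}=\alpha_x\circ\tau_x^*\alpha_y$. Such a coherent system is exactly a splitting over $A[\lambda]$ of the $\G_m$-extension attached to $\calM$ (the restriction to $A[\lambda]$ of the extension of $A$ by $\G_m$ classified by $P$), which need not exist over $k$; its obstruction is essentially the class $\delta(P)$ itself. So as written the construction quietly assumes auxiliary data of the same nature as the statement being proved, and the deferred verification cannot be completed.

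The repair is to arrange the definitions so that the $\calM$-factor is never translated. For instance, let $T_P$ consist of pairs $(Q,\phi)$ with $\lambda(Q)=P$ and $\phi\colon \LL\tensor\calM \isomto \tau_Q^*\LL$, and let $(x,\psi)\in\calH(\LL)$ act by $(Q,\phi)\mapsto\bigl(Q+x,\ (\tau_Q^*\psi)\circ\phi\bigr)$; here compatibility with the group law of $\calH(\LL)$ is immediate, no $\alpha_x$ is needed, and simple transitivity and the identification of the image in $\HH^1(A[\lambda])$ with $\delta(P)$ go through as you intended. This is exactly the fiber over $P$ of the sheaf $\calG(\LL)$ of triples $(x,y,\phi)$, $\phi\colon\LL\tensor\Poincare_y\to\tau_x^*\LL$, used in the paper, whose proof is the family version (over all of $\Adual$ at once) of your fiberwise construction; note that the paper's action $(x,\psi)\cdot(x',y',\phi')=(x+x',y',(\tau_{x'}^*\psi)\circ\phi')$ likewise translates only the Heisenberg isomorphism, never the Poincar\'e factor.
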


\begin{proof}
Let $\Poincare$ be the Poincar\'e line sheaf on $A \times \Adual$.
For $y \in \Adual(k)$, let $\Poincare_y$ be the line sheaf on $A$
obtained by restricting $\Poincare$ to $A \times \{y\}$.
For any $y_1,y_2 \in \Adual(k)$, there is a canonical isomorphism
$\iota_{y_1,y_2} \colon \Poincare_{y_1} \tensor \Poincare_{y_2} 
\to \Poincare_{y_1+y_2}$, 
satisfying a cocycle condition \cite{Polishchuk2003}*{\S10.3}.

The group $\calH(\LL)(k)$ acts on the left
on the set of triples $(x,y,\phi)$ where $x \in A(k)$, $y \in \Adual(k)$,
and $\phi \colon \LL \tensor \Poincare_y \to (\tau_x^* \LL)$
as follows:
\[
	(x,\phi)(x',y',\phi') = (x+x',y',(\tau_{x'} \phi) \phi').
\]
The same holds after base extension,
and we get an fppf-sheaf of sets $\calG(\LL)$ 
on which $\calH(\LL)$ acts freely.
There is a morphism $\calG(\LL) \to \Adual$ 
sending $(x,y,\phi)$ to $y$,
and this identifies $\Adual$ 
with the quotient sheaf $\calH(\LL) \backslash \calG(\LL)$.
There is also a morphism $\calG(\LL) \to A$
sending $(x,y,\phi)$ to $x$,
and the action of $\calH(\LL)$ on $\calG(\LL)$
is compatible with the action of its quotient $A[\lambda]$ on $A$.
Thus we have the following compatible diagram
\[
\xymatrix{
1 \ar[r] & \calH(\LL) \ar@{..>}[r] \ar[d] & \calG(\LL) \ar[r] \ar[d] & \Adual \ar[r] \ar[d] & 0 \\
0 \ar[r] & A[\lambda] \ar[r] & A \ar[r]^\lambda & \Adual \ar[r] & 0 \\
}
\]
where the first row indicates only that $\calH(\LL)$ acts
freely on $\calG(\LL)$ with quotient being $\Adual$.
This is enough to give a commutative square of pointed sets
\[
\xymatrix{
\HH^0(\Adual) \ar[r] \ar@{=}[d] & \HH^1(\calH(\LL)) \ar[d] \\
\HH^0(\Adual) \ar[r] & \HH^1(A[\lambda]), \\
}
\]
so $\HH^0(\Adual) \to \HH^1(A[\lambda])$ factors through $\HH^1(\calH(\LL))$.
But the sequence 
$\HH^1(\calH(\LL)) \to \HH^1(A[\lambda]) \to \HH^2(\G_m)$ 
from \eqref{E:Heisenberg extension}
is exact,
so the composition $\HH^0(\Adual) \to \HH^1(A[\lambda]) \to \HH^2(\G_m)$
is $0$.
\end{proof}

\begin{remark}
Proposition~\ref{P:rational points give isotropic subgroup}
can be generalized to an abelian scheme 
over an arbitrary base scheme $S$.
The proof is the same.
\end{remark}

\subsection{Local fields}
\label{S:local fields}

Let $k_v$ be a local field.
The group $\HH^1(k_v,A[\lambda])$ has a topology making it locally compact,
the group $\HH^2(k_v,\G_m)$ may be identified with a subgroup of $\Q/\Z$
and given the discrete topology,
and the quadratic form $q \colon \HH^1(k_v,A[\lambda]) \to \HH^2(k_v,\G_m)$
is continuous (cf.~\cite{MilneADT2006}*{III.6.5}; the same arguments work
even though \eqref{E:Heisenberg extension} has a nonabelian group
in the middle).
The composition
\[
	\HH^1(k_v,A[\lambda]) \stackrel{q}\to \HH^2(k_v,\G_m) \injects \Q/\Z.
\]
is a quadratic form $q_v$.
By local duality~\cite{MilneADT2006}*{I.2.3,~I.2.13(a),~III.6.10}, 
$q_v$ is nondegenerate.
Moreover, $\HH^1(k_v,A[\lambda])$ is 
finite if $\Char k_v \nmid \deg \lambda$ \cite{MilneADT2006}*{I.2.3,~I.2.13(a)}, and 
$\sigma$-compact in general \cite{MilneADT2006}*{III.6.5(a)}, 
so it is second-countable by Corollary~\ref{C:second-countable}.

\begin{proposition}
\label{P:local points are maximal isotropic}
Let $k_v$ be a local field.
In~\eqref{E:descent sequence} for $k_v$,
the group $W \isom \Adual(k_v)/\lambda A(k_v)$ 
is a compact open maximal isotropic subgroup of $(\HH^1(k_v,A[\lambda]),q_v)$,
which is therefore weakly metabolic.
\end{proposition}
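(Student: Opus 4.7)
The plan is to verify four properties of $W$: (i) $q_v|_W = 0$; (ii) $W$ is compact; (iii) $W$ is open in $\HH^1(k_v, A[\lambda])$; and (iv) $W = W^\perp$ with respect to the bilinear pairing associated to $q_v$. Properties (i) and (iv) together make $W$ maximal isotropic in the sense of Section~\ref{S:quadratic modules}, and combining with (ii), (iii), and the nondegeneracy of $q_v$ already noted before the proposition gives weak metabolicity per Definition~\ref{D:weakly metabolic}.

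Property (i) is immediate: in Case~II it is Proposition~\ref{P:rational points give isotropic subgroup} applied over $k_v$, while in Case~I the definition $q_v(x) = -\tfrac{1}{2}(x \underset{e_\lambda}{\cup} x)$ vanishes on $W$ because $W$ is already isotropic for the symmetric cup-product pairing $\underset{e_\lambda}{\cup}$, as recorded at the start of Section~\ref{S:Heisenberg}.

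For (ii) and (iii), I would use the descent sequence
\[
  0 \to W \to \HH^1(k_v, A[\lambda]) \to \HH^1(k_v, A)[\lambda] \to 0.
\]
Compactness of $W$ reduces to compactness of $\Adual(k_v)$, which holds in the non-archimedean case because $\Adual$ is proper over $k_v$, and trivially in the archimedean case where every group in sight is finite. Openness of $W$ is equivalent to discreteness of the quotient $\HH^1(k_v, A)[\lambda]$, and this follows from the standard fact (\cite{MilneADT2006}, Chapter~I) that $\HH^1(k_v, A)$ is a discrete torsion group.

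Property (iv) is the main obstacle and is exactly local Tate duality for the self-dual isogeny $\lambda$. The Weil pairing $e_\lambda$ identifies $A[\lambda]$ with its own Cartier dual, so Milne's ADT (Theorem~I.2.3 and Corollary~I.2.13) equips $\HH^1(k_v, A[\lambda])$ with a perfect self-pairing induced by cup product, which is $-$ the bilinear pairing associated to $q_v$ by Corollary~\ref{C:construction of quadratic form}. Milne's ADT Corollary~I.3.4 then identifies the image of $\Adual(k_v)/\lambda A(k_v)$ in $\HH^1(k_v, A[\lambda])$ with the exact annihilator of the corresponding image for the dual isogeny; since $\lambda$ is self-dual, this other image coincides with $W$, yielding $W = W^\perp$.
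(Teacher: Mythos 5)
Your proposal is correct and follows essentially the same route as the paper: isotropy of $W$ comes from Proposition~\ref{P:rational points give isotropic subgroup} (resp.\ the Case~I remark), and $W=W^\perp$ is exactly the paper's appeal to Tate local duality, just phrased via the ``images for an isogeny and its dual are exact annihilators'' form rather than via the kernel of the dual map and exactness of~\eqref{E:descent sequence}; you also spell out compactness and openness, which the paper leaves implicit in its references. The only adjustment needed is to cite, alongside Milne I.3.4, the archimedean and characteristic-$p$ (fppf) analogues (I.3.7 and III.7.8), as the paper does, so that all places of a global field are covered.
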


\begin{proof}
By Proposition~\ref{P:rational points give isotropic subgroup},
$q_v$ restricts to $0$ on $W$,
so it suffices to show that $W^\perp=W$.
Let $A(k_v)_\bullet$ be $A(k_v)$ modulo its connected component
(which is nonzero only if $k_v$ is $\R$ or $\C$).
Then $W$ is the image of $\Adual(k_v)_\bullet \to \HH^1(k_v,A[\lambda])$,
so $W^\perp$ is the kernel of the dual map,
which by Tate local duality \cite{MilneADT2006}*{I.3.4,~I.3.7,~III.7.8}
is $\HH^1(k_v,A[\lambda]) \to \HH^1(k_v,A)$.
This kernel is $W$, by exactness of~\eqref{E:descent sequence}.
\end{proof}

Suppose that $k_v$ is nonarchimedean.
Let $\calO_v$ is its valuation ring and let $\F_v$ be its residue field.
Suppose that $A$ has \defi{good reduction},
i.e., that it extends to an abelian scheme (again denoted $A$) over $\calO_v$.
Then the fppf-cohomology group $\HH^1(\calO_v,A[\lambda])$ 
is an open subgroup of $\HH^1(k_v,A[\lambda])$.

\begin{remark}
If moreover $\Char \F_v \nmid \deg \lambda$,
then we may understand $\HH^1(\calO_v,A[\lambda])$ in concrete terms as 
the kernel $\HH^1(k_v,A[\lambda])_\unr$ of the restriction map 
\[
	\HH^1(k_v,A[\lambda]) \to \HH^1(k_v^\unr,A[\lambda])
\]
of Galois cohomology groups.
\end{remark}

\begin{proposition}
\label{P:unramified is isotropic}
Suppose that $k_v$ is nonarchimedean.
Suppose that $A$ extends to an abelian scheme over $\calO_v$.
Then the subgroups $W \isom \Adual(k_v)/\lambda A(k_v)$
and $\HH^1(\calO_v,A[\lambda])$ in $\HH^1(k_v,A[\lambda])$ are equal.
In particular, $\HH^1(\calO_v,A[\lambda])$ is a maximal isotropic subgroup.
\end{proposition}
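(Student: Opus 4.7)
The plan is to compare the $\calO_v$-version of the descent sequence with its $k_v$-version. Over $\calO_v$, the fppf-exact sequence
\[
0 \to A[\lambda] \to A \stackrel{\lambda}\to \Adual \to 0
\]
gives a long exact sequence whose relevant portion reads
\[
\Adual(\calO_v)/\lambda A(\calO_v) \injects \HH^1(\calO_v,A[\lambda]) \to \HH^1(\calO_v,A)[\lambda] \to 0.
\]
The key input I would invoke is that $\HH^1(\calO_v,A)=0$. Since $A$ is smooth over $\calO_v$, fppf and \'etale cohomology agree, and by the standard argument (see, e.g., \cite{MilneADT2006}*{III.7.1}, \cite{MilneADT2006}*{III.3.11}) we have $\HH^1(\calO_v,A) \isom \HH^1(\F_v,A_{\F_v})$, which vanishes by Lang's theorem applied to the connected smooth group scheme $A_{\F_v}$ over the finite field $\F_v$. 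Hence the natural map $\Adual(\calO_v)/\lambda A(\calO_v) \to \HH^1(\calO_v,A[\lambda])$ is an isomorphism.

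Next I would identify $\Adual(\calO_v)/\lambda A(\calO_v)$ with $W$. By the valuative criterion of properness, $A(\calO_v) = A(k_v)$ and $\Adual(\calO_v) = \Adual(k_v)$, so the two quotients agree as abstract groups. Functoriality of the descent sequence with respect to the map $\Spec k_v \to \Spec \calO_v$ yields a commutative square
\[
\xymatrix{
\Adual(\calO_v)/\lambda A(\calO_v) \ar[r]^-{\sim} \ar@{=}[d] & \HH^1(\calO_v,A[\lambda]) \ar[d] \\
\Adual(k_v)/\lambda A(k_v) \ar[r]^-\delta & \HH^1(k_v,A[\lambda]) \\
}
\]
so the image of $\HH^1(\calO_v,A[\lambda])$ in $\HH^1(k_v,A[\lambda])$ equals $W$. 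Combined with the openness of $\HH^1(\calO_v,A[\lambda]) \subseteq \HH^1(k_v,A[\lambda])$ noted just before the proposition, and the injectivity of this restriction map (which follows from the same diagram together with the injectivity of $\delta$), we conclude $\HH^1(\calO_v,A[\lambda]) = W$ as subgroups of $\HH^1(k_v,A[\lambda])$.

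The ``in particular'' then is immediate from Proposition~\ref{P:local points are maximal isotropic}, which tells us that $W$ is a compact open maximal isotropic subgroup of $(\HH^1(k_v,A[\lambda]),q_v)$. The main subtlety, and the only real obstacle, is making sure the vanishing $\HH^1(\calO_v,A)=0$ and the identification with the residue field cohomology are used in the fppf (rather than merely \'etale) setting, so that the argument applies uniformly without the hypothesis $\Char\F_v \nmid \deg\lambda$; this is handled by smoothness of $A$ over $\calO_v$, which reduces fppf cohomology to \'etale cohomology for the sheaves $A$ and $\Adual$, so Lang's theorem suffices.
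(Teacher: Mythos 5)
Your proof is correct and follows essentially the same route as the paper: vanishing of $\HH^1(\calO_v,A)$ via Lang's theorem, the valuative criterion of properness to identify $A(\calO_v)=A(k_v)$ and $\Adual(\calO_v)=\Adual(k_v)$, and then taking cohomology of the descent sequence over $\calO_v$ and comparing with the sequence over $k_v$. You merely spell out more explicitly the functoriality and injectivity points that the paper's terser proof leaves implicit.
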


\begin{proof}
By~\cite{MilneEtaleCohomology1980}*{III.3.11(a)} and~\cite{Lang1956},
respectively, $\HH^1(\calO_v,A) \isom \HH^1(\F_v,A) = 0$.
The valuative criterion for properness~\cite{Hartshorne1977}*{II.4.7}
yields $A(\calO_v) = A(k_v)$ and $\Adual(\calO_v)=\Adual(k_v)$.
So taking cohomology of \eqref{E:descent sequence} over $\calO_v$
gives the result.
\end{proof}

\subsection{Global fields}
\label{S:global fields}

Let $k$ be a global field.
For any nonempty subset $\calS$ of $\Omega$ containing the archimedean places,
define the \defi{ring of $\calS$-integers} 
$\calO_\calS:=\{x \in k : v(x) \ge 0 \textup{ for all $v \notin \calS$}\}$.

Let $A$ be an abelian variety over $k$,
Let $\lambda \colon A \to \Adual$ be a self-dual isogeny 
as in Case I or~II of Section~\ref{S:Heisenberg}.
Choose a nonempty finite $\calS$ \defi{containing all bad places},
by which we mean that $\calS$ contains all archimedean places 
and $A$ extends to an abelian scheme $A$ over $\calO_\calS$.
In Example~\ref{E:restricted direct product}
take $I=\Omega$, $V_i=\HH^1(k_v,A[\lambda])$,
$Q_i=q_v$, and $W_i=\Adual(k_v)/\lambda A(k_v)$,
which is valid by Proposition~\ref{P:local points are maximal isotropic}.
The resulting restricted direct product
\[
	V:=\sideset{}{'}\prod_{v \in \Omega} 
	\left(\HH^1(k_v,A[\lambda]),\frac{\Adual(k_v)}{\lambda A(k_v)} \right)
\]
equipped with the quadratic form
\begin{align*}
	Q \colon
	\sideset{}{'}\prod_{v \in \Omega} 
	\left(\HH^1(k_v,A[\lambda]),\frac{\Adual(k_v)}{\lambda A(k_v)} \right)
	&\to \Q/\Z \\
	(\xi_v)_{v \in \Omega} &\mapsto \sum_v q_v(\xi_v).
\end{align*}
is a second-countable weakly metabolic locally compact quadratic module.
Proposition~\ref{P:unramified is isotropic},
which applies for all but finitely many $v$,
shows that 
\[
	V = \sideset{}{'}\prod_{v \in \Omega} 
	\left(\HH^1(k_v,A[\lambda]),\HH^1(\calO_v,A[\lambda]) \right).
\]
(The subgroup $\HH^1(\calO_v,A[\lambda])$ is defined and equal to
$\Adual(k_v)/\lambda A(k_v)$ only for $v \notin \calS$,
but that is enough.)

As usual, define the \defi{Selmer group}
\[
	\Sel_\lambda A := \ker\left( \HH^1(k,A[\lambda]) 
		\to \prod_{v \in \Omega} \HH^1(k_v,A) \right).
\]

Below will appear $\Sha^1(k,A[\lambda])$,
which is a subgroup of $\Sel_\lambda A$,
and is not to be confused with 
the \defi{Shafarevich-Tate group} $\Sha(A)=\Sha^1(k,A)$.

\begin{theorem}
\label{T:Selmer is intersection}
\hfill
\begin{enumerate}[\upshape (a)]
\item
The images of the homomorphisms
\[
\xymatrix{
& \HH^1(k,A[\lambda]) \ar[d] \\
\displaystyle \prod_{v \in \Omega} \dfrac{\Adual(k_v)}{\lambda A(k_v)} \ar[r] 
	& \displaystyle 
	\sideset{}{'}\prod_{v \in \Omega} 
	(\HH^1(k_v,A[\lambda]),\HH^1(\calO_v,A[\lambda]))
}
\]
are maximal isotropic subgroups with respect to $Q$.
\item\label{I:quotient of Sel}
The vertical map induces an isomorphism from
$\Sel_\lambda A/\Sha^1(k,A[\lambda])$
to the intersection of these two images.
(See Section~\ref{S:Sha} for information about $\Sha^1(k,A[\lambda])$,
which is often~$0$.)
\end{enumerate}
\end{theorem}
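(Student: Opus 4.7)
The plan is to treat the two subgroups separately, then deduce part (b) directly from the definitions. Throughout, write $W_v = \Adual(k_v)/\lambda A(k_v) \subseteq \HH^1(k_v,A[\lambda])$ for the local image, and let $W^{\mathrm{loc}} = \prod_v W_v$ and $W^{\mathrm{glob}}$ denote the image of $\HH^1(k,A[\lambda])$ in $V$.

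\textbf{The local image is maximal isotropic.} Proposition~\ref{P:local points are maximal isotropic} says that each $W_v$ is a compact open maximal isotropic subgroup of $(\HH^1(k_v,A[\lambda]),q_v)$. Thus Example~\ref{E:restricted direct product}, applied with $(V_i,Q_i,W_i) = (\HH^1(k_v,A[\lambda]),q_v,W_v)$, produces exactly the restricted product $(V,Q,W^{\mathrm{loc}})$; in particular $W^{\mathrm{loc}}$ is maximal isotropic in $(V,Q)$. (That the restricted direct product can alternatively be built using the subgroups $\HH^1(\calO_v,A[\lambda])$ is Proposition~\ref{P:unramified is isotropic}.)

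\textbf{The global image lies in $V$ and is isotropic.} Any class $\xi \in \HH^1(k,A[\lambda])$ is unramified outside a finite set of places (enlarge $\calS$ so that $\xi$ comes from $\HH^1(\calO_\calS,A[\lambda])$, possible since $A[\lambda]$ extends to a finite flat group scheme over $\calO_\calS$). Hence its local components lie in $\HH^1(\calO_v,A[\lambda])$ for almost all $v$, so $\xi \mapsto (\xi_v)$ lands in $V$. For isotropy, note that the Heisenberg extension~\eqref{E:Heisenberg extension} is defined over $k$, so $q$ is in fact induced by a global quadratic map $q_k \colon \HH^1(k,A[\lambda]) \to \HH^2(k,\G_m) = \Br(k)$ (Case~II; in Case~I use $q_k(x) = -\tfrac12(x \underset{e_\lambda}\cup x)$). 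Localization commutes with the connecting map, so $q_v(\xi_v) = \inv_v(q_k(\xi)_v)$, and the reciprocity law $\sum_v \inv_v = 0$ on $\Br(k)$ gives $Q((\xi_v)) = \sum_v q_v(\xi_v) = 0$.

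\textbf{The global image is maximal isotropic.} The associated bilinear pairing on $V$ is (up to sign) the pairing coming from the Weil pairing $e_\lambda$ (Corollary~\ref{C:construction of quadratic form}), under which $A[\lambda]$ is self-dual. The Poitou--Tate exact sequence (cf.~\cite{MilneADT2006}*{I.4.10}) shows that the image of $\HH^1(k,A[\lambda])$ in $V$ is exactly the annihilator of itself under this bilinear pairing. Together with isotropy of $Q$ just established, this gives $(W^{\mathrm{glob}})^\perp = W^{\mathrm{glob}}$ and $Q|_{W^{\mathrm{glob}}} = 0$, i.e.\ $W^{\mathrm{glob}}$ is maximal isotropic in $(V,Q)$.

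\textbf{Part (b).} By definition of the restricted product topology and of $\Sel_\lambda A$, an element $\xi \in \HH^1(k,A[\lambda])$ maps into $W^{\mathrm{loc}}$ exactly when $\xi_v \in W_v$ for every $v$, i.e.\ when $\xi \in \Sel_\lambda A$. Hence the vertical map induces a surjection $\Sel_\lambda A \to W^{\mathrm{glob}} \cap W^{\mathrm{loc}}$, whose kernel is the set of classes trivial at every place, namely $\Sha^1(k,A[\lambda])$ by~\eqref{E:Sha}. This yields the asserted isomorphism.

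\textbf{Main obstacle.} The substantive step is verifying maximal isotropy of $W^{\mathrm{glob}}$: isotropy alone is the easier reciprocity statement, but upgrading to maximality requires the full strength of Poitou--Tate duality, together with care that the quadratic (not merely bilinear) form really does vanish globally. The other mild subtlety, in Case~II, is checking that the Heisenberg sequence, which lives over $k$, gives rise compatibly to the local $q_v$ under localization; this is a functoriality of the connecting homomorphism for~\eqref{E:Heisenberg extension}.
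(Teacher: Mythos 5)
Your proposal is correct and follows essentially the same route as the paper: local maximal isotropy via Proposition~\ref{P:local points are maximal isotropic} and the restricted direct product construction, global isotropy via functoriality of the quadratic form plus Brauer reciprocity, maximality of the global image via the middle of the Poitou--Tate sequence, and part~(b) from the local descent sequences together with the definitions of $\Sel_\lambda A$ and $\Sha^1(k,A[\lambda])$. No gaps to report.
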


\begin{proof}\hfill
  \begin{enumerate}[\upshape (a)]
  \item 
The subgroup 
$\prod_{v \in \Omega} \Adual(k_v)/\lambda A(k_v)$ 
(or rather its image $W$ under the horizontal injection)
is maximal isotropic by construction.

The vertical homomorphism 
$\HH^1(k,A[\lambda]) \to 
	\sideset{}{'}\prod_{v \in \Omega} 
	(\HH^1(k_v,A[\lambda]),\HH^1(\calO_v,A[\lambda]))$
is well-defined since 
each element of $\HH^1(k,A[\lambda])$ belongs to the subgroup
$\HH^1(\calO_\calT,A[\lambda])$ for some finite $\calT \subseteq \Omega$
containing $\calS$,
and $\calO_{\calT} \subseteq \calO_v$ for all $v \notin \calT$.
Let $W$ be the image.
Suppose that $s \in \HH^1(k,A[\lambda])$,
and let $w \in W$ be its image.
The construction of the quadratic form 
of Corollary~\ref{C:construction of quadratic form}
is functorial with respect to base extension,
so $Q(w)$ can be computed by evaluating the global quadratic form 
\[
	q \colon \HH^1(k,A[\lambda]) \to \HH^2(k,\G_m)
\]
on $s$, and afterwards summing the local invariants.
Exactness of
\[
	0 \to \HH^2(k,\G_m) \to \Directsum_{v \in \Omega} \HH^2(k_v,\G_m) \stackrel{\sum \inv}\longrightarrow \Q/\Z \to 0
\]
in the middle 
(the reciprocity law for the Brauer group: 
see~\cite{Gille-Szamuely2006}*{Remark~6.5.6} for references)
implies that the sum of the local invariants of our global class is $0$.
Thus $Q|_W=0$.

It remains to show that $W$ is its own annihilator.
Since $e_\lambda$ identifies $A[\lambda]$ with its own Cartier dual,
the middle three terms of the $9$-term Poitou-Tate exact sequence 
(\cite{MilneADT2006}*{I.4.10(c)} and \cite{GonzalezAviles2009}*{4.11})
give the self-dual exact sequence
\[
		\HH^1(k,A[\lambda]) 
	\stackrel{\beta_1}\longrightarrow 
		\sideset{}{'}\prod_{v \in \Omega} 
		(\HH^1(k_v,A[\lambda]),\HH^1(\calO_v,A[\lambda]))
	\stackrel{\gamma_1}\longrightarrow 
		\HH^1(k,A[\lambda])^*,
\]
where $*$ denotes Pontryagin dual.
Since $W=\im(\beta_1)$ and the dual of $\beta_1$ is $\gamma_1$,
\[
	W^\perp = \ker(\gamma_1) = \im(\beta_1) = W.
\]
\item
This follows from the exactness of~\eqref{E:descent sequence}
for $S=\Spec k_v$ for each $v \in \Omega$. \qedhere
  \end{enumerate}
\end{proof}

\begin{remark}
\label{R:variant}
There is a variant of 
Theorem~\ref{T:Selmer is intersection}
in which the infinite restricted direct product is taken
over only a {\em subset} $\calS$ of $\Omega$
containing all bad places 
and all places of residue characteristic dividing $\deg \lambda$.
If $\calS$ is finite, then the restricted direct product
becomes a finite direct product.
The same proof as before shows that 
the images of $\prod_{v\in \calS} \Adual(k_v)/\lambda A(k_v)$ 
and $\HH^1(\calO_\calS,A[\lambda])$ are maximal isotropic.
The intersection of the images equals the image of $\Sel_\lambda A$.
\end{remark}

\begin{remark}
\label{R:4-torsion again}
Suppose that $A = \Jac X$ and $\lambda$ is multiplication-by-$2$,
with $\LL$ as in Remark~\ref{R:2mu}.
Let $c_\calT$ be as in Definition~\ref{D:theta characteristic torsor},
and let $c_{\calT,v}$ be its image in $\HH^1(k_v,A[2])$.
It follows from \cite{Poonen-Stoll1999}*{Corollary~2}
that $c_\calT \in \Sel_2 A$.
By \cite{Poonen-Rains-selfcup-preprint}*{Theorem~3.9}, 
\begin{equation}
  \label{E:self cup product}
	x \underset{e_2}{\cup} x = x \underset{e_2}{\cup} c_\calT
\end{equation}
for all $x \in \HH^1(k_v,A[2])$.
This, with Remark~\ref{R:4-torsion}, implies
that $q_v$ takes values in $\frac12\Z/\Z$ (instead of just $\frac14\Z/\Z$)
if and only if $c_{\calT,v}=0$.
Thus 
\begin{center}
$Q$ takes values in $\frac12\Z/\Z$ $\iff$
$c_\calT \in \Sha^1(k,A[2])$.
\end{center}
For an example with $c_\calT \in \Sel_2 A-\Sha^1(k,A[2])$,
and another example with $0 \ne c_\calT \in \Sha^1(k,A[2])$,
see \cite{Poonen-Rains-selfcup-preprint}*{Example~3.12}.
\end{remark}

\begin{remark}
\label{R:adjustment}
Suppose that we are considering a family of abelian varieties
with a systematic subgroup $G$ of $\Sel_\lambda A$ coming from rational points
(e.g., a family of elliptic curves with rational $2$-torsion).
Let $X$ be the image of $G$ in
$\sideset{}{'}\prod_{v \in \Omega} 
(\HH^1(k_v,A[\lambda]),\HH^1(\calO_v,A[\lambda]))$.
Then our model for $\Sel_\lambda A$ should be that its image
in $X^\perp/X$ is an intersection of random maximal isotropic subgroups.
In particular, the size of $\Sel_\lambda A/\Sha^1(k,A[\lambda])$
should be distributed as $\#X$ times the size of the random intersection.
\end{remark}

\begin{example}
Suppose that $\Char k \ne p$
and $A$ is an elliptic curve $E \colon y^2=f(x)$.
The theta divisor $\Theta$ on $E$ is the identity point with multiplicity $1$.
Let $\LL=\OO(p\Theta)$.
Then $\lambda$ is $E \stackrel{p}\to E$,
and $\Sel_\lambda A$ is the $p$-Selmer group $\Sel_p E$.
Moreover, $\Sha^1(k,E[p])=0$ 
by Proposition~\ref{P:Sha^1=0}\eqref{I:Sha of E[p]}.
Thus Theorem~\ref{T:Selmer is intersection}
identifies $\Sel_p E$ as an intersection of 
two maximal isotropic subspaces in an $\F_p$-vector space.
Moreover, the values of quadratic form on that space are killed by $p$,
even when $p=2$, since $c_\calT=0$ in~\eqref{E:self cup product}.
In particular, $\dim \Sel_p E$
should be expected to be distributed according to $X_{\Sel_p}$,
with the adjustment given by Remark~\ref{R:adjustment} when necessary
for the family at hand.
This is evidence for
Conjecture~\ref{C:conjecture for Sel_p E}\eqref{I:distribution for Sel_p E}.
The rest of Conjecture~\ref{C:conjecture for Sel_p E},
concerning moments, is plausible given
Proposition~\ref{P:moments}\eqref{I:moments}.
\end{example}

\begin{example}
The same reasoning applies
to the $p$-Selmer group of the Jacobian of a hyperelliptic curve
$y^2=f(x)$ over a global field of characteristic not $2$
in the following cases:
\begin{itemize}
\item $f$ is separable of degree $2g+1$, and $p$ is arbitrary;
\item $f$ is separable of degree $2g+2$, and $p$ is odd.
\end{itemize}
(Use Propositions~\ref{P:Sha^1=0}\eqref{I:Sha of A[2]} 
and~\ref{P:Sha usually 0}, and Remark~\ref{R:extension of Sha usually 0}.)
This suggests Conjecture~\ref{C:conjecture for odd degree hyperelliptic}.
\end{example}

\begin{example}
\label{E:Sel_2 of genus 2 curve}
Consider $y^2=f(x)$ over a global field $k$ of characteristic not $2$ 
with $\deg f=2g+2$ for {\em even} $g \ge 2$.
Proposition~\ref{P:Sha usually 0} 
and Remark~\ref{R:extension of Sha usually 0} 
show that $\Sha^1(k,A[p])$ is $0$ with probability $1$ 
for each $p \ne \Char k$.
But the Hilbert irreducibility theorem shows that $c_\calT \ne 0$ 
with probability~$1$, 
so Remarks \ref{R:4-torsion again} and~\ref{R:4-torsion}
suggest that $\dim \Sel_2 A$ now has the distribution $X_{\Sel_2}+1$.
This suggests Conjecture~\ref{C:even genus}.

In the analogous situation with $g$ odd, it is less clear what to
predict for $\Sel_2 A$: 
using techniques in \cite{Poonen-Stoll1999}*{\S9.2}
one can show that the probability that $c_{\calW_1}$
lies in $\Sel_2 A$ is strictly between $0$ and $1$,
and the existence of this element may invalidate the random model.
\end{example}

\section{Relation to heuristics for $\Sha$ and rank}
\label{S:Delaunay}

The Hilbert irreducibility theorem shows that asymptotically $100\%$
of elliptic curves (ordered by na\"{\i}ve height) have $E(\Q)[p]=0$.
(For much stronger results, see \cite{Duke1997} and \cite{Jones2010}.)
So for statistical purposes, when letting $E$ run over all elliptic curves,
we may ignore contributions of torsion to the $p$-Selmer group.

In analogy with the Cohen-Lenstra heuristics~\cite{Cohen-Lenstra1983}, 
Delaunay has formulated a conjecture describing the distribution
of Shafarevich-Tate groups of random elliptic curves over $\Q$.
We now recall his conjectures for $\dim_{\F_p} \Sha[p]$.
For each prime $p$ and $r \in \Z_{\ge 0}$, 
let $X_{\Sha[p],r}$ be a random variable
taking values in $2\Z_{\ge 0}$ such that 
\[
	\Prob\left(X_{\Sha[p],r}=2n\right) \; = \; p^{-n(2r+2n-1)} 
		\frac{\prod_{i=n+1}^\infty (1-p^{-(2r+2i-1)})}
		{\prod_{i=1}^n (1-p^{-2i})}.
\]
The following conjecture is as in 
\cite{Delaunay2001}*{Example~F and Heuristic Assumption}, 
with the correction that $u/2$ in the Heuristic Assumption
is replaced by $u$ (his $u$ is our $r$).
This correction was suggested explicitly in~\cite{Delaunay2007}*{\S3.2}
for rank~$1$, and it seems natural to make the correction
for higher rank too.

\begin{conjecture}[Delaunay]
\label{C:Delaunay}
Let $r,n\in \Z_{\ge 0}$.
If $E$ ranges over elliptic curves over $\Q$ of rank $r$, up to isomorphism,
ordered by conductor, then the fraction with $\dim_{\F_p} \Sha(E)[p]=2n$
equals $\Prob\left(X_{\Sha[p],r}=2n\right)$.
\end{conjecture}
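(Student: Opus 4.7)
Since the final statement is a conjecture rather than a theorem, any ``proof'' I can sketch is necessarily a heuristic derivation of the precise formula rather than a rigorous argument. My plan is to reconstruct the conjecture in two steps: (i) identify the correct category of ``random objects'' modeling $\Sha(E)[p]$, and (ii) specialize the Cohen--Lenstra weighting to extract the explicit probabilities.

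First, the modeling. The Cassels--Tate pairing (assuming finiteness of $\Sha$, as is implicit in Delaunay's setup) endows $\Sha(E)[p^\infty]$ with a nondegenerate \emph{alternating} bilinear pairing, and hence $\Sha(E)[p]$ with a nondegenerate alternating pairing on an $\F_p$-vector space. In particular $\dim_{\F_p} \Sha(E)[p]$ is always even, which accounts for the support of $X_{\Sha[p],r}$ lying in $2\Z_{\ge 0}$. Following the Cohen--Lenstra philosophy, one then posits that a finite abelian $p$-group $G$ equipped with such a pairing should appear with probability proportional to $1/|\Aut(G,\textup{pairing})|$, weighted further by a factor reflecting the rank. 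Delaunay's choice of the weight is a factor of $|G|^{-r}$ times the number of surjections from a free module of rank $r$, echoing the Cohen--Lenstra treatment of class groups of number fields with a prescribed number of independent units.

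Second, the combinatorial extraction. Fix the $\F_p$-dimension to be $2n$. The automorphism group of a $2n$-dimensional symplectic $\F_p$-space is $\Sp_{2n}(\F_p)$, of order $p^{n^2}\prod_{i=1}^n(p^{2i}-1)$, so the unnormalized weight is $|G|^{-r}/|\Sp_{2n}(\F_p)|$ multiplied by the rank-$r$ surjection count. Routine manipulation of these Gaussian-binomial-type products collapses the expression into $p^{-n(2r+2n-1)}/\prod_{i=1}^n(1-p^{-2i})$ times a normalizing constant; the normalizing constant is then forced to be $\prod_{i=n+1}^\infty(1-p^{-(2r+2i-1)})$ by requiring the probabilities to sum to $1$, which itself reduces to a classical $q$-series identity in the spirit of Proposition~\ref{P:counting}\eqref{I:generating function}.

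The main obstacle is that none of this is actually a proof: there is no known mechanism for deriving the $1/|\Aut|$ weighting from first principles, and no unconditional equidistribution statement for $\Sha$ in families is available. The most promising route toward a rigorous statement would pass through a function-field analogue over $\F_q(t)$, where $\Sha$ admits a cohomological description (e.g.~as a piece of the Brauer group of an elliptic surface), and where one can hope to invoke Deligne-style equidistribution of Frobenius together with the matrix-integral techniques of Katz--Sarnak to replace the combinatorial identities above with a geometric calculation. Over number fields, however, Conjecture~\ref{C:Delaunay} must be treated as an axiom, which is precisely how it enters the present paper (e.g.~as input to Theorem~\ref{T:Delaunay compatibility}).
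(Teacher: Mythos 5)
This statement is a conjecture, not a theorem: the paper offers no proof of it, but simply quotes Delaunay's heuristic prediction (with the correction of $u/2$ to $u$ in his Heuristic Assumption) and uses it as input to Theorem~\ref{T:Delaunay compatibility}. So there is no argument in the paper to compare yours against, and you are right that the only thing one can do is re-derive the formula heuristically in the Cohen--Lenstra style, which is essentially what Delaunay does.

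That said, the specific weighting you write down does not actually reproduce the stated distribution. For $G=(\F_p)^{2n}$ with a nondegenerate alternating pairing one has $\#\Sp_{2n}(\F_p)=p^{n^2}\prod_{i=1}^n(p^{2i}-1)=p^{2n^2+n}\prod_{i=1}^n(1-p^{-2i})$, so a weight proportional to $|G|^{-r}/\#\Sp_{2n}(\F_p)$ gives $p^{-n(2r+2n+1)}\big/\prod_{i=1}^n(1-p^{-2i})$, which is off from the conjectured $p^{-n(2r+2n-1)}$ by a factor of $p^{2n}=\#G$; and the ``number of surjections from a free module of rank $r$'' cannot supply this factor (for $r<2n$ there are no such surjections at all, so that factor would kill most terms rather than correct them). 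The weight that does collapse to $p^{-n(2r+2n-1)}\big/\prod_{i=1}^n(1-p^{-2i})$, with the normalizing constant $\prod_{i\ge n+1}(1-p^{-(2r+2i-1)})$ then forced by summing to $1$, is $(\#G)^{1-r}\big/\#\Aut(G,\textup{pairing})$ --- this is exactly Delaunay's (corrected) heuristic, and the replacement of $u/2$ by $u$ mentioned in the paper lives in this exponent. Your structural points (Cassels--Tate forces even $\dim_{\F_p}\Sha[p]$; no mechanism is known for deriving the $1/\#\Aut$ weighting; the statement enters the paper only as an axiom) are all correct, but as a heuristic derivation of the precise formula the proposal needs the weight fixed as above.
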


If the ``rank'' $r$ itself is a random variable $R$,
viewed as a prior distribution, 
then the distribution of $\dim \Sel_p E$ 
should be given by $R + X_{\Sha[p],R}$.
On the other hand, 
Theorem~\ref{T:Selmer is intersection} suggests that $\dim \Sel_p E$ 
should be distributed according to $X_{\Sel_p}$.
Let $R_{\textup{conjectured}}$ be the random variable 
taking values $0$ and $1$ with probability $1/2$ each.

\begin{theorem}
\label{T:Delaunay compatibility}
For each prime $p$,
the unique $\Z_{\ge 0}$-valued random variable $R$ 
such that $X_{\Sel_p}$ and $R+X_{\Sha[p],R}$ have the same distribution
is $R_{\textup{conjectured}}$.
\end{theorem}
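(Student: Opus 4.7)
My plan is to split the theorem into existence (the random variable $R = R_{\textup{conjectured}}$ produces the right distribution) and uniqueness (at most one $\Z_{\ge 0}$-valued $R$ does). Setting $p_r := \Prob(R = r)$ and $a_d := \Prob(X_{\Sel_p} = d)$ and conditioning on $R$, the hypothesis is equivalent to the countable system
$$a_d \;=\; \sum_{\substack{0 \le r \le d \\ r \equiv d \pmod 2}} p_r \cdot \Prob\bigl(X_{\Sha[p], r} = d-r\bigr) \qquad (d \ge 0),$$
where the parity restriction on $r$ uses that $X_{\Sha[p], r}$ is $2\Z_{\ge 0}$-valued.

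\textbf{Uniqueness.} In the $d$-th equation, the coefficient of the unknown $p_d$ is $\Prob(X_{\Sha[p], d} = 0) = \prod_{i \ge 1}(1 - p^{-(2d + 2i - 1)})$, which is strictly positive. Splitting by parity, the equations for even $d$ form a lower-triangular system in $p_0, p_2, p_4, \ldots$ and the equations for odd $d$ form a lower-triangular system in $p_1, p_3, p_5, \ldots$; each $p_r$ is therefore uniquely determined by induction on $r$.

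\textbf{Existence.} For $R = R_{\textup{conjectured}}$ one has $p_0 = p_1 = 1/2$ and $p_r = 0$ for $r \ge 2$, so the above system collapses to the two families of identities
$$a_{2n} \;=\; \tfrac{1}{2}\Prob(X_{\Sha[p],0} = 2n), \qquad a_{2n+1} \;=\; \tfrac{1}{2}\Prob(X_{\Sha[p],1} = 2n) \qquad (n \ge 0).$$
I would verify these using the alternative form $c = \frac{1}{2}\prod_{i \ge 0}(1 - p^{-(2i+1)})$ of the constant in Proposition~\ref{P:counting}\eqref{I:ad}, together with the rearrangement $\prod_{j=1}^{m}\frac{p}{p^j - 1} = p^{-m(m-1)/2}\prod_{j=1}^{m}(1 - p^{-j})^{-1}$, and then canceling the common infinite tail on both sides of each identity. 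What remains is the tautology that $\prod_{j=1}^{m}(1 - p^{-j})$ factors as the product of its odd-indexed factors with its even-indexed factors, for $m = 2n$ or $m = 2n+1$. The only real content in the whole argument is this bookkeeping of products, and it collapses immediately once the alternative form of $c$ is used; I do not expect any genuine obstacle.
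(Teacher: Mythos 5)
Your proposal is correct and follows essentially the same route as the paper: the existence step reduces to the identical identities $a_{2n}=\tfrac12\Prob(X_{\Sha[p],0}=2n)$ and $a_{2n+1}=\tfrac12\Prob(X_{\Sha[p],1}=2n)$, verified by the same manipulation of the products via $c=\tfrac12\prod_{i\ge0}(1-p^{-(2i+1)})$. Your uniqueness argument is the paper's as well, just packaged differently: the paper proves linear independence of the functions $f_r(s):=\Prob(X_{\Sha[p],r}=s-r)$ by induction using $f_r(s)=0$ for $s<r$ and $f_r(r)>0$, which is precisely your observation that the system is lower-triangular (split by parity) with strictly positive diagonal entries.
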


\begin{proof}
First we show that $R_{\textup{conjectured}}$ has the claimed property.
This follows from the following identities for $n \in \Z_{\ge 0}$:
\begin{align*}
  \Prob\left(X_{\Sel_p}=2n \right) \; 
	&= c \prod_{j=1}^{2n} \frac{p}{p^j-1} \\
	&= \frac12 \prod_{i \ge 0} (1-p^{-(2i+1)}) \cdot 
		p^{-n(2n-1)} \prod_{j=1}^{2n} (1-p^{-j})^{-1} \\
	&= \frac12 p^{-n(2n-1)} \prod_{i \ge n+1} (1-p^{-(2i-1)}) 
		\prod_{i=1}^n (1-p^{-2i})^{-1} \\
	&= \frac12 \Prob\left(X_{\Sha[p],0} = 2n \right)
\end{align*}
\begin{align*}
  \Prob \left(X_{\Sel_p}=2n+1 \right) \; 
	&= c \prod_{j=1}^{2n+1} \frac{p}{p^j-1} \\
	&= \frac12 \prod_{i \ge 0} (1-p^{-(2i+1)}) \cdot 
		p^{-n(2n+1)} \prod_{j=1}^{2n+1} (1-p^{-j})^{-1} \\
	&= \frac12 p^{-n(2n+1)} \prod_{i \ge n+1} (1-p^{-(2i+1)}) 
		\prod_{i=1}^n (1-p^{-2i})^{-1} \\
	&= \frac12 \Prob\left(X_{\Sha[p],1} = 2n \right).
\end{align*}

Next we show that any random variable $R$ 
with the property has the same distribution as $R_{\textup{conjectured}}$.
For $r \in \Z_{\ge 0}$, define a function $f_r \colon \Z_{\ge 0} \to \R$
by $f_r(s):=\Prob(X_{\Sha[p],r} = s-r)$.
The assumption on $R$ implies that
\[
	\sum_{r=0}^\infty \Prob(R=r) f_r(s) 
	= \sum_{r=0}^\infty \Prob\left(R_{\textup{conjectured}}=r \right) f_r(s).
\]
Thus to prove that $R$ and $R_{\textup{conjectured}}$ have the same
distribution, it will suffice to prove that the functions $f_r$ 
are linearly independent in the sense that for
any sequence of real numbers $(\alpha_r)_{r \ge 0}$ 
with $\sum_{r \ge 0} |\alpha_r| < \infty$
the equality $\sum_{r=0}^\infty \alpha_r f_r = 0$ 
implies that $\alpha_r=0$ for all $r \in \Z_{\ge 0}$.
In fact, $\alpha_r=0$ by induction on $r$,
since $f_r(s)=0$ for all $s>r$, and $f(r,r)>0$.
\end{proof}

\section*{Acknowledgements} 

We thank Christophe Delaunay, Benedict Gross, 
Robert Guralnick, and Karl Rubin for comments.

\begin{bibdiv}
\begin{biblist}

\bib{Atiyah-Wall1967}{article}{
  author={Atiyah, M. F.},
  author={Wall, C. T. C.},
  title={Cohomology of groups},
  conference={ title={Algebraic Number Theory (Proc. Instructional Conf., Brighton, 1965)}, },
  book={ publisher={Thompson, Washington, D.C.}, },
  date={1967},
  pages={94--115},
  review={\MR {0219512 (36 \#2593)}},
}

\bib{Bhargava-Shankar-preprint1}{misc}{
  author={Bhargava, Manjul},
  author={Shankar, Arul},
  title={Binary quartic forms having bounded invariants, and the boundedness of the average rank of elliptic curves},
  date={2010-06-09},
  note={Preprint, arXiv:1006.1002},
}

\bib{Bhargava-Shankar-preprint2}{misc}{
  author={Bhargava, Manjul},
  author={Shankar, Arul},
  title={Ternary cubic forms having bounded invariants, and the existence of a positive proportion of elliptic curves having rank~$0$},
  date={2010-07-01},
  note={Preprint, arXiv:1007.0052},
}

\bib{BourbakiIntegration1-6}{book}{
  author={Bourbaki, Nicolas},
  title={Integration. I. Chapters 1--6},
  series={Elements of Mathematics (Berlin)},
  note={Translated from the 1959, 1965 and 1967 French originals by Sterling K. Berberian},
  publisher={Springer-Verlag},
  place={Berlin},
  date={2004},
  pages={xvi+472},
  isbn={3-540-41129-1},
  review={\MR {2018901 (2004i:28001)}},
}

\bib{Braconnier1948}{article}{
  author={Braconnier, Jean},
  title={Sur les groupes topologiques localement compacts},
  language={French},
  journal={J. Math. Pures Appl. (9)},
  volume={27},
  date={1948},
  pages={1--85},
  issn={0021-7824},
  review={\MR {0025473 (10,11c)}},
}

\bib{Bruin-Poonen-Stoll-preprint}{misc}{
  author={Bruin, Nils},
  author={Poonen, Bjorn},
  author={Stoll, Michael},
  title={Generalized explicit descent and its application to curves of genus~$3$},
  date={2010-07-24},
  note={Preprint},
}

\bib{Cauchy1843}{article}{
  author={Cauchy, A.},
  title={M\'emoire sur les fonctions dont plusieurs valeurs sont li\'ees entre elles par une \'equation lin\'eaire, et sur diverses transformations de produits compos\'es d'un nombre ind\'efini de facteurs},
  language={French},
  journal={C. R. Acad. Sci. Paris},
  volume={17},
  date={1843},
  pages={523--531},
  note={Reprinted in {\em Oeuvres}, Ser.~1, Vol.~8, Gauthier-Villars, Paris, 1893, 42--50},
}

\bib{Cohen-Lenstra1983}{article}{
  author={Cohen, H.},
  author={Lenstra, H. W., Jr.},
  title={Heuristics on class groups of number fields},
  conference={ title={Number theory, Noordwijkerhout 1983}, address={Noordwijkerhout}, date={1983}, },
  book={ series={Lecture Notes in Math.}, volume={1068}, publisher={Springer}, place={Berlin}, },
  date={1984},
  pages={33--62},
  review={\MR {756082 (85j:11144)}},
  doi={10.1007/BFb0099440},
}

\bib{DeJong2002}{article}{
  author={de Jong, A. J.},
  title={Counting elliptic surfaces over finite fields},
  note={Dedicated to Yuri I. Manin on the occasion of his 65th birthday},
  journal={Mosc. Math. J.},
  volume={2},
  date={2002},
  number={2},
  pages={281--311},
  issn={1609-3321},
  review={\MR {1944508 (2003m:11080)}},
}

\bib{Delaunay2001}{article}{
  author={Delaunay, Christophe},
  title={Heuristics on Tate-Shafarevitch groups of elliptic curves defined over $\mathbb {Q}$},
  journal={Experiment. Math.},
  volume={10},
  date={2001},
  number={2},
  pages={191--196},
  issn={1058-6458},
  review={\MR {1837670 (2003a:11065)}},
}

\bib{Delaunay2007}{article}{
  author={Delaunay, Christophe},
  title={Heuristics on class groups and on Tate-Shafarevich groups: the magic of the Cohen-Lenstra heuristics},
  conference={ title={Ranks of elliptic curves and random matrix theory}, },
  book={ series={London Math. Soc. Lecture Note Ser.}, volume={341}, publisher={Cambridge Univ. Press}, place={Cambridge}, },
  date={2007},
  pages={323--340},
  review={\MR {2322355 (2008i:11089)}},
}

\bib{Duke1997}{article}{
  author={Duke, William},
  title={Elliptic curves with no exceptional primes},
  language={English, with English and French summaries},
  journal={C. R. Acad. Sci. Paris S\'er. I Math.},
  volume={325},
  date={1997},
  number={8},
  pages={813--818},
  issn={0764-4442},
  review={\MR {1485897 (99b:11059)}},
  doi={10.1016/S0764-4442(97)80118-8},
}

\bib{Foulser1964}{article}{
  author={Foulser, David A.},
  title={The flag-transitive collineation groups of the finite Desarguesian affine planes},
  journal={Canad. J. Math.},
  volume={16},
  date={1964},
  pages={443--472},
  issn={0008-414X},
  review={\MR {0166272 (29 \#3549)}},
}

\bib{Gille-Szamuely2006}{book}{
  author={Gille, Philippe},
  author={Szamuely, Tam{\'a}s},
  title={Central simple algebras and Galois cohomology},
  series={Cambridge Studies in Advanced Mathematics},
  volume={101},
  publisher={Cambridge University Press},
  place={Cambridge},
  date={2006},
  pages={xii+343},
  isbn={978-0-521-86103-8},
  isbn={0-521-86103-9},
  review={\MR {2266528 (2007k:16033)}},
}

\bib{Goldfeld1979}{article}{
  author={Goldfeld, Dorian},
  title={Conjectures on elliptic curves over quadratic fields},
  conference={ title={Number theory, Carbondale 1979 (Proc. Southern Illinois Conf., Southern Illinois Univ., Carbondale, Ill., 1979)}, },
  book={ series={Lecture Notes in Math.}, volume={751}, publisher={Springer}, place={Berlin}, },
  date={1979},
  pages={108--118},
  review={\MR {564926 (81i:12014)}},
}

\bib{GonzalezAviles2009}{article}{
  author={Gonz{\'a}lez-Avil{\'e}s, Cristian D.},
  title={Arithmetic duality theorems for 1-motives over function fields},
  journal={J. Reine Angew. Math.},
  volume={632},
  date={2009},
  pages={203--231},
  issn={0075-4102},
  review={\MR {2544149 (2010i:11169)}},
  doi={10.1515/CRELLE.2009.055},
}

\bib{Hartshorne1977}{book}{
  author={Hartshorne, Robin},
  title={Algebraic geometry},
  note={Graduate Texts in Mathematics, No. 52},
  publisher={Springer-Verlag},
  place={New York},
  date={1977},
  pages={xvi+496},
  isbn={0-387-90244-9},
  review={\MR {0463157 (57 \#3116)}},
}

\bib{Heath-Brown1993}{article}{
  author={Heath-Brown, D. R.},
  title={The size of Selmer groups for the congruent number problem},
  journal={Invent. Math.},
  volume={111},
  date={1993},
  number={1},
  pages={171--195},
  issn={0020-9910},
  review={\MR {1193603 (93j:11038)}},
  doi={10.1007/BF01231285},
}

\bib{Heath-Brown1994}{article}{
  author={Heath-Brown, D. R.},
  title={The size of Selmer groups for the congruent number problem. II},
  note={With an appendix by P. Monsky},
  journal={Invent. Math.},
  volume={118},
  date={1994},
  number={2},
  pages={331--370},
  issn={0020-9910},
  review={\MR {1292115 (95h:11064)}},
  doi={10.1007/BF01231536},
}

\bib{Jones2010}{article}{
  author={Jones, Nathan},
  title={Almost all elliptic curves are Serre curves},
  journal={Trans. Amer. Math. Soc.},
  volume={362},
  date={2010},
  number={3},
  pages={1547--1570},
  issn={0002-9947},
  review={\MR {2563740}},
  doi={10.1090/S0002-9947-09-04804-1},
}

\bib{Kakutani1943}{article}{
  author={Kakutani, Shizuo},
  title={On cardinal numbers related with a compact Abelian group},
  journal={Proc. Imp. Acad. Tokyo},
  volume={19},
  date={1943},
  pages={366--372},
  review={\MR {0015124 (7,375a)}},
}

\bib{Kane-preprint}{misc}{
  author={Kane, Daniel M.},
  title={On the ranks of the 2-Selmer groups of twists of a given elliptic curve},
  date={2011-03-09},
  note={Preprint, arXiv:1009.1365},
}

\bib{Katz-Sarnak1999a}{book}{
  author={Katz, Nicholas M.},
  author={Sarnak, Peter},
  title={Random matrices, Frobenius eigenvalues, and monodromy},
  series={American Mathematical Society Colloquium Publications},
  volume={45},
  publisher={American Mathematical Society},
  place={Providence, RI},
  date={1999},
  pages={xii+419},
  isbn={0-8218-1017-0},
  review={\MR { 2000b:11070}},
}

\bib{Katz-Sarnak1999b}{article}{
  author={Katz, Nicholas M.},
  author={Sarnak, Peter},
  title={Zeroes of zeta functions and symmetry},
  journal={Bull. Amer. Math. Soc. (N.S.)},
  volume={36},
  date={1999},
  number={1},
  pages={1--26},
  issn={0273-0979},
  review={\MR {1640151 (2000f:11114)}},
  doi={10.1090/S0273-0979-99-00766-1},
}

\bib{Lang1956}{article}{
  author={Lang, Serge},
  title={Algebraic groups over finite fields},
  journal={Amer. J. Math.},
  volume={78},
  date={1956},
  pages={555--563},
  issn={0002-9327},
  review={\MR {0086367 (19,174a)}},
}

\bib{Mazur-Rubin2010}{article}{
  author={Mazur, Barry},
  author={Rubin, Karl},
  title={Ranks of twists of elliptic curves and Hilbert's tenth problem},
  journal={Invent. Math.},
  volume={181},
  date={2010},
  pages={541--575},
}

\bib{MilneEtaleCohomology1980}{book}{
  author={Milne, J. S.},
  title={\'Etale cohomology},
  series={Princeton Mathematical Series},
  volume={33},
  publisher={Princeton University Press},
  place={Princeton, N.J.},
  date={1980},
  pages={xiii+323},
  isbn={0-691-08238-3},
  review={\MR {559531 (81j:14002)}},
}

\bib{MilneADT2006}{book}{
  author={Milne, J. S.},
  title={Arithmetic duality theorems},
  publisher={BookSurge, LLC},
  edition={Second edition},
  date={2006},
  pages={viii+339},
  isbn={1-4196-4274-X},
  review={\MR {881804 (88e:14028)}},
}

\bib{MumfordAV1970}{book}{
  author={Mumford, David},
  title={Abelian varieties},
  series={Tata Institute of Fundamental Research Studies in Mathematics, No. 5 },
  publisher={Published for the Tata Institute of Fundamental Research, Bombay},
  date={1970},
  pages={viii+242},
  review={\MR {0282985 (44 \#219)}},
}

\bib{Mumford1971}{article}{
  author={Mumford, David},
  title={Theta characteristics of an algebraic curve},
  journal={Ann. Sci. \'Ecole Norm. Sup. (4)},
  volume={4},
  date={1971},
  pages={181--192},
  issn={0012-9593},
  review={\MR {0292836 (45 \#1918)}},
}

\bib{MumfordTheta3}{book}{
  author={Mumford, David},
  title={Tata lectures on theta. III},
  series={Progress in Mathematics},
  volume={97},
  note={With the collaboration of Madhav Nori and Peter Norman},
  publisher={Birkh\"auser Boston Inc.},
  place={Boston, MA},
  date={1991},
  pages={viii+202},
  isbn={0-8176-3440-1},
  review={\MR {1116553 (93d:14065)}},
}

\bib{O'Neil2002}{article}{
  author={O'Neil, Catherine},
  title={The period-index obstruction for elliptic curves},
  journal={J. Number Theory},
  volume={95},
  date={2002},
  number={2},
  pages={329--339},
  issn={0022-314X},
  review={\MR {1924106 (2003f:11079)}},
  doi={10.1016/S0022-314X(01)92770-2},
  note={Erratum in {\em J. Number Theory} {\bf 109} (2004), no.~2, 390},
}

\bib{Polishchuk2003}{book}{
  author={Polishchuk, Alexander},
  title={Abelian varieties, theta functions and the Fourier transform},
  series={Cambridge Tracts in Mathematics},
  volume={153},
  publisher={Cambridge University Press},
  place={Cambridge},
  date={2003},
  pages={xvi+292},
  isbn={0-521-80804-9},
  review={\MR {1987784 (2004m:14094)}},
}

\bib{Pollatsek1971}{article}{
  author={Pollatsek, Harriet},
  title={First cohomology groups of some linear groups over fields of characteristic two},
  journal={Illinois J. Math.},
  volume={15},
  date={1971},
  pages={393--417},
  issn={0019-2082},
  review={\MR {0280596 (43 \#6316)}},
}

\bib{Poonen-Rains-selfcup-preprint}{misc}{
  author={Poonen, Bjorn},
  author={Rains, Eric},
  title={Self cup products and the theta characteristic torsor},
  date={2011-04-10},
  note={Preprint},
}

\bib{Poonen-Stoll1999}{article}{
  author={Poonen, Bjorn},
  author={Stoll, Michael},
  title={The Cassels-Tate pairing on polarized abelian varieties},
  journal={Ann. of Math. (2)},
  volume={150},
  date={1999},
  number={3},
  pages={1109\ndash 1149},
  issn={0003-486X},
  review={\MR {1740984 (2000m:11048)}},
}

\bib{Rothe1811}{book}{
  author={Rothe, H. A.},
  title={Systematisches Lehrbuch der Arithmetik},
  publisher={Barth},
  place={Leipzig},
  date={1811},
}

\bib{Scharlau1985}{book}{
  author={Scharlau, Winfried},
  title={Quadratic and Hermitian forms},
  series={Grundlehren der Mathematischen Wissenschaften [Fundamental Principles of Mathematical Sciences]},
  volume={270},
  publisher={Springer-Verlag},
  place={Berlin},
  date={1985},
  pages={x+421},
  isbn={3-540-13724-6},
  review={\MR {770063 (86k:11022)}},
}

\bib{Swinnerton-Dyer2008}{article}{
  author={Swinnerton-Dyer, Peter},
  title={The effect of twisting on the 2-Selmer group},
  journal={Math. Proc. Cambridge Philos. Soc.},
  volume={145},
  date={2008},
  number={3},
  pages={513--526},
  issn={0305-0041},
  review={\MR {2464773 (2010d:11059)}},
  doi={10.1017/S0305004108001588},
}

\bib{VanKampen1935}{article}{
  author={van Kampen, E. R.},
  title={Locally bicompact abelian groups and their character groups},
  journal={Ann. of Math. (2)},
  volume={36},
  date={1935},
  number={2},
  pages={448--463},
  issn={0003-486X},
  review={\MR {1503234}},
  doi={10.2307/1968582},
}

\bib{Weil1964}{article}{
  author={Weil, Andr{\'e}},
  title={Sur certains groupes d'op\'erateurs unitaires},
  language={French},
  journal={Acta Math.},
  volume={111},
  date={1964},
  pages={143--211},
  issn={0001-5962},
  review={\MR {0165033 (29 \#2324)}},
}

\bib{Yu2005}{article}{
  author={Yu, Gang},
  title={Average size of 2-Selmer groups of elliptic curves. II},
  journal={Acta Arith.},
  volume={117},
  date={2005},
  number={1},
  pages={1--33},
  issn={0065-1036},
  review={\MR {2110501 (2006b:11054)}},
  doi={10.4064/aa117-1-1},
}

\bib{Yu2006}{article}{
  author={Yu, Gang},
  title={Average size of 2-Selmer groups of elliptic curves. I},
  journal={Trans. Amer. Math. Soc.},
  volume={358},
  date={2006},
  number={4},
  pages={1563--1584 (electronic)},
  issn={0002-9947},
  review={\MR {2186986 (2006j:11080)}},
  doi={10.1090/S0002-9947-05-03806-7},
}

\bib{Zarhin1974}{article}{
  author={Zarhin, Ju. G.},
  title={Noncommutative cohomology and Mumford groups},
  language={Russian},
  journal={Mat. Zametki},
  volume={15},
  date={1974},
  pages={415--419},
  issn={0025-567X},
  review={\MR {0354612 (50 \#7090)}},
}

\end{biblist}
\end{bibdiv}

\end{document}